\newtheorem{remark}[theorem]{Remark }
\newcommand{\Emat}{\mathsf{E}}
\newcommand{\Amat}{\mathsf{A}}
\newcommand{\Bmat}{\mathsf{B}}
\newcommand{\Ymat}{\mathsf{Y}}
\newcommand{\Umat}{\mathsf{U}}
\newcommand{\Vmat}{\mathsf{V}}
\newcommand{\Vcal}{\mathcal{V}}
\newcommand{\Qmat}{\mathsf{Q}}
\newcommand{\Rmat}{\mathsf{R}}
\newcommand{\Sigmamat}{\mathsf{\Sigma}}
\newcommand{\Ecal}{\mathcal{E}}
\newcommand{\Scal}{\mathcal{S}}
\newcommand{\Kcal}{\mathcal{K}}
\newcommand{\Dcal}{\mathscr{D}}
\newcommand{\Pcal}{\mathcal{P}}
\newcommand{\Ycal}{\mathcal{Y}}
\newcommand{\Lcal}{\mathcal{L}}
\newcommand{\vep}{\varepsilon}
\newcommand{\Null}{\text{Null\,}}
\newcommand{\rd}{\mathrm{d}}
\newcommand{\wt}[1]{\widetilde{#1}}
\newcommand{\R}{\mathbb{R}}
\title{A Low-Rank Schwarz Method for Radiative Transfer Equation with
  Heterogeneous Scattering Coefficient\thanks{The work of JL is
    supported in part by the National Science Foundation via grant
    DMS-1454939. The work of KC, QL, and SW is supported in part by
    the National Science Foundation via grant 1740707. The work of SW
    is further supported in part by National Science Foundation grants
    1628384 and 1634597; Subcontract 8F-30039 from Argonne National
    Laboratory; and Award N660011824020 from the DARPA Lagrange
    Program. The work of KC and QL is further supported in part by
    Wisconsin Data Science Initiative and National Science Foundation
    via grant DMS-1750488, and DMS-1107291: RNMS KI-Net.}}
\author{Ke Chen\thanks{Mathematics Department, University of Wisconsin-Madison, Madison, WI 53706 ({\tt kchen222@wisc.edu})} 
	\and Qin Li\thanks{Mathematics Department and Discovery Institute, University of Wisconsin-Madison, Madison, WI 53706 ({\tt qinli@math.wisc.edu})} 
	\and Jianfeng Lu\thanks{Department of Mathematics, Department of Physics, and Department of Chemistry, Duke University, Durham, NC 27708 ({\tt jianfeng@math.duke.edu}) } 
	\and Stephen J. Wright\thanks{Computer Sciences Department, University of Wisconsin, Madison, WI 53706 ({\tt swright@cs.wisc.edu})}
}
\begin{document}

\maketitle
 
\begin{abstract}
  Random sampling has been used to find low-rank structure and to
  build fast direct solvers for multiscale partial differential
  equations of various types. In this work, we design an accelerated
  Schwarz method for radiative transfer equations that makes use of
  approximate local solution maps constructed offline via a random
  sampling strategy. Numerical examples demonstrate the accuracy,
  robustness, and efficiency of the proposed approach.
\end{abstract}

\begin{keywords}
Random sampling, Schwarz method, heterogeneous media, radiative transfer equation
\end{keywords}

\begin{AMS}
65N
\end{AMS}

\pagestyle{myheadings}
\thispagestyle{plain}
\markboth{Ke Chen, Qin Li, Jianfeng Lu and Stephen J. Wright}{Low-Rank Schwarz Methods for Heterogeneous RTE}

\section{Introduction}

The radiative transfer equation (RTE) is a standard model that
describes propagation of light through such turbid media as biological
tissues or planetary atmospheres. The equation is used in situations
in which energy is transported by light, as in the study of the
greenhouse effect \cite{barcza2017greenhouse}, optical tomography
\cite{KLOSE2002691}, and the radiation field for atmosphere-ocean
system \cite{Plass:69}. Light is injected from a source, and RTE
models the absorption and scattering of the photons in the ambient
material.

The model equation for the steady state is
\begin{equation}\label{eqn:RTE_plain}
	v\cdot \nabla_x u(x,v) = \sigma(x) \Lcal u(x,v)\,, \quad (x,v)\in \Dcal :=\Kcal \times \Vcal,
\end{equation}
where $u(x,v)$ describes the light intensity at location $x$ oriented
in velocity direction $v$. The  left-hand side describes free
propagation of the photons along direction $x$ with velocity $v$,
while the  right-hand side characterizes interaction between photons
and media (via absorption and scattering). The media information is
encoded in $\sigma(x)$, which is strictly positive for all $x$. The
operator $\Lcal$, typically an integral operator, characterizes how
photons are scattered and change directions. We denote the physical domain by $\Kcal$.  Since photons always move
with the same speed, the velocity term is determined purely by the
direction, so that $v\in \Vcal = \mathbb{S}^{d-1}$, the unit sphere in
$d$ dimensions.

In the large space-regime, with scaling $x\to\frac{x}{\vep}$ (where
$\vep$ is a small parameter discussed below), the equation
\eqref{eqn:RTE_plain} becomes
\begin{equation}\label{eqn:RTE}
	\vep v\cdot \nabla_x u(x,v) = \sigma^\delta(x) \Lcal u(x,v)\,,
\end{equation}
where $\sigma^\delta(x)$ is the rescaled media function, with $\delta$
capturing the smallest scale of the variation in the media.  This
function is rough when $\delta \ll 1$. In the equation \eqref{eqn:RTE}, $\vep$ is the Knudsen number that represents the ratio of
the mean free path to the typical domain length.

With appropriate boundary conditions, well-posedness of the equation
is straightforward, and is independent of the scales (that is, the
smallness of $\vep$ or $\delta$)~\cite{agoshkov2012boundary,Lions}. 
In this paper, we tackle the numerical challenge of
  designing an efficient numerical solver for \eqref{eqn:RTE}. We are
  especially interested in the regime of small $\vep$ and small
  $\delta$, where classical numerical methods typically require high
  memory and computational cost, as we explain below.

\subsection{Asymptotic preserving}\label{sec:AP}

Small parameters in PDEs can induce computational challenges. In the case described above, we have
$\nabla_xu \sim {1}/{\min\{\vep\,,\delta\}}$, so a classical numerical
solver can be expected to attain good accuracy only when the mesh size
in the discretization $\Delta x$ satisfies
\[
\Delta x\ll \min\{\vep\,,\delta\}\,.
\]
A grid in $d$ dimensions with this discretization parameter will have
at least $N \gg \min (\vep,\delta)^{-d}$ grid points, so the computation is prohibitive when $\vep$ and $\delta$ are small.

A natural question is whether it is possible to design a numerical
method for which the computational cost of obtaining a stable,
accurate solution is independent of the parameters $\vep$ and
$\delta$, and whether the numerical solution can capture the right
asymptotic limit of the solution as $\vep$ and $\delta$ approach zero.
If a numerical solver for a multiscale problem has its discretization
independent of the smallest scale in the equation, but still preserves
the asymptotic limits, then the solver is called asymptotic-preserving
(AP). This term was coined in~\cite{jin1999efficient} for a class of
kinetic equations, although some algorithms for simpler settings had
been designed previously \cite{LARSEN1987283}. Extensive progress has
been made during the past decade, with AP solvers being designed for
the Bhatnagar-Gross-Krook equation (BGK, a special simplified version
of the Boltzmann model that keeps the equilibrium), the Boltzmann
equation, the Vlasov-Poisson-Boltzmann (VPB) equation, and many
others~\cite{lemou2008new,crouseilles:hal-00533327,dimarco_pareschi_2014,degond2011asymptotic,HU2017103}. See
also the reviews~\cite{jin2010asymptotic}.

A standard approach for designing AP solvers is based on analysis of
the asymptotic limits.  In some cases, asymptotic limits for the
equations can be derived: the Euler limit for the Boltzmann, the
coupled diffusion-Poisson system for the VPB system. One strategy for
obtaining the AP property is to work with two sets of solvers, one for the original
equation and one for the asymptotic limit, the latter being encoded in
the former via a weight that can be tuned. In the limit as
$\vep\to 0$, this weight is adjusted so that the limiting equation solver
dominates, driving the numerical solution to that of the asymptotic
limiting system.

The analysis-based approach is straightforward and mathematically
sound, and has made some previously impossible computations
feasible. It depends, however, on analytical understanding of the
asymptotic limit, which is not always straightforward. We are led to
ask whether it is possible to design an AP solver that does not
require detailed knowledge of the asymptotic limit.  This paper
addresses this question in the specific case of RTE. This equation is
complicated in that different patterns of convergence of the pair
$(\vep,\delta)$ to $(0,0)$ lead to different limiting systems, not all
of which are well understood. Can we design AP numerical solvers in
the absence of this analytical understanding?  We outline an answer to
this question in the next section.

\subsection{Random Sampling and PDE Compression} 
We design AP solvers without analytical knowledge by using {\em compression} techniques.  Even when asymptotic limits of equations
with small parameters are difficult to derive analytically, we can
sometimes show the {\em existence} of such limits. In the discrete
setting, a basic second order discretization scheme with $\Delta x = o(\sqrt{\tau}\epsilon,\sqrt{\tau}\delta)$ should suffice to attain the accuracy level $\tau$, which leads to $N_\vep =
\min\{{\vep\sqrt{\tau}},\delta\sqrt{\tau}\}^{-d}$ grid points for $O(1)$ domain-size. When a limiting equation exists, this accuracy
level may be attainable with as few as $N = \tau^{-d}$ grid
points. Since the limiting equation is asymptotically close to the
original equation, these $N$ degrees of freedom are asymptotically sufficient to
represent the original PDE solution that naively would require
$N_\vep$ grid points to compute. This observation implies that the
$N_\vep$-dimensional solution space is compressible, and can be well
approximated by a $N$-dimensional space when $\vep$ and $\delta$ are
small.

Knowing that the space is ``compressible'', can one find the
compressed space quickly? We answer this question affirmatively, in
the case of the RTE, by making use of random sampling.  Random
sampling is not a new strategy. It has been used in data science to
sample sparse vectors (as in compressed sensing \cite{CanRT06d}) and
low-rank matrices~\cite{halko2011finding}, with the goal of
reconstructing these objects from a relatively small number of
samples. Generally, the number of samples is tied more closely to the
intrinsic dimension of the object (for example, the number of non-zeros
in a sparse vector) than to the dimension of the ambient space, which
is typically much larger.

Applications of random sampling techniques to PDEs have been limited
previously to the discrete algebraic systems obtained by discretizing the
PDEs. A direct link to the original PDEs needs to be explored further.
Some important questions have not been fully resolved, for example,
whether the PDE {\em solution space} or the {\em solution operator} is
``compressible''.  The two views correspond to regarding a matrix as
defining a column space or as a linear operator, respectively. Other
questions involve the sense in which these objects are
``compressible'', and whether the spectral norm used for matrices is
the appropriate norm in the case of PDEs.

Previously, mostly in the context of elliptic PDEs,
  the low rank property of the solution space has been investigated
  and utilized in numerical solvers. For example, homogenization
  theory has been utilized~\cite{HOU1997169,ming2005analysis} for
  designing local basis functions for multiscale problems with
  structured media. For more general setting of $L_\infty$ media, the
  Kolmogorov $N$-width or the problem was studied in a pioneering
  paper~\cite{babuska2011optimal}, while the structure fo the Green's
  functions were investigated in the framework of hierarchical
  matrix~\cite{bebendorf2008hierarchical,hackbusch2015hierarchical}.
  Inspired by the studies, many algorithms have been proposed to
  utilize the rank (or decay) property,
  including~\cite{chung2016sparse,LIPTON201638,Peterseim_mathcomp}. Algorithms
  that specifically use PDE compression and random sampling ideas are
  developed
  in~\cite{calo2016randomized,xia2013randomized,Martinsson11,
    Smetana,DOOSTAN20113015}. In the transport equation
  setting,~\cite{ChungEfendievLiLi_2020} incorporated the random
  sampling technique within the discontinuous Galerkin framework for
  building local solution dictionaries. Corresponding to the first
  question asked above, in most of these papers, the authors regard
  the solution space to be ``compressible'' and the associated matrix
  is regarded as a column space.  A more systematic investigation of
PDE compression appears recently in our previous
work~\cite{chen2018random}, where compressed PDE solution spaces are
related to low rank structure of the matrix formed by the Green's
functions. Such concepts in multi-scale PDE computation as
asymptotic-preserving (see above) and numerical homogenization are
unified under this framework.

\subsection{Contribution}
This paper follows the line of research started in
\cite{chen2018random}. For RTE~\eqref{eqn:RTE}, we know only that the
equation has asymptotic limits with small parameters, but the actual
forms of the limiting equations are unknown. 
We aim to design an accurate
numerical scheme whose runtime is independent of the smallness of the
coefficients in the equation. 

We apply the Schwarz iteration under the domain decomposition
framework.  The domain is divided into overlapping subdomains
(patches). The PDEs in these patches can be solved in
parallel. Solution of the PDE on each patch with partial boundary
conditions yields an output in the form of boundary conditions that
are passed to neighboring patches. The PDE on each patch is solved
again with the modified boundary conditions supplied by its neighbors,
the whole process repeating until the solutions are consistent in the
overlapping regions. The {\em boundary-to-boundary map}, in which the
inputs are the partial boundary conditions on the patch PDEs and the
output are the missing boundary conditions obtained by solving the
PDEs, is a {\em compressible map}.  We will develop an algorithm based
on random sampling that computes an adequate approximation to this map quickly. The overall scheme
is a composition of an offline component, in which low-rank
approximations to the boundary-to-boundary maps are obtained using
random sampling; and an online step, in which Schwarz iteration,
accelerated by the low-rank boundary-to-boundary map, is executed
until a solution consistent across the whole domain is found.

Our work contrasts with the approach in~\cite{chen2018random}, where
the local solution space is compressed in an offline step. In the
online step, a solution for particular boundary conditions or source
term is found as a linear combination of basis vectors for the
compressed space, with the coefficients chosen to match the given
conditions.  The problem of finding these coefficients is typically
overdetermined, the number of coefficients being fewer than the
constraints arising from the boundary conditions or source term.  Some
accuracy is sacrificed, and the error is difficult to quantify.  The
current work compresses the boundary-to-boundary map, rather than the
local solution space, in the offline stage, and uses the compressed
map to update local boundary conditions in the online stage, until a
preset error tolerance is achieved.

In this work, to demonstrate our numerical scheme and
  validate our theory, we consider a simpler setting of $1+1$ problem,
  that is, one spatial dimension and one velocity dimension. Our
  theory can be extended to higher dimensions in a conceptually
  straightforward way, but the implementation of the numerical scheme
  would become significantly more delicate in such cases. We do not
  pursue high-dimensional versions in this paper.

The remainder of the paper is organized as follows.  We introduce the
concept of ``low-rankness'' in the context of the RTE in
Section~\ref{sec:LowRank}. In Section~\ref{sec:method}, we review the
Schwarz iteration under the domain decomposition framework, and
present the new low-rank Schwarz iteration method based on random
sampling. Numerical experience is described in
Section~\ref{sec:Numerical}.

\section{Low-rankness of RTE in Various Regimes}
\label{sec:LowRank}

As discussed above, current AP schemes rely heavily on good
understanding of the analytical form of the asymptotic limits,
although in some situations, this limiting form is hard to specify,
even when we know that it exists. The radiative transfer equation with
small Knudsen number $\vep$ and small media oscillation period
$\delta$ is a good example of the latter phenomenon. As $\vep$ and
$\delta$ converge to $(0,0)$ in different ways, the limiting equations
are different, and only some of the limiting forms can be expressed
explicitly. We show two different homogenization effects in the
following two subsections, and unify them using the concept of the
low-rankness in Section~\ref{sec:low_rank}.

Consider the RTE in infinite domain \eqref{eqn:RTE}, which we restate here:
\begin{equation}\label{eqn:RTE_scaled}
\vep v\cdot \nabla_x u(x,v) =\sigma^\delta(x)\Lcal u(x,v),
\end{equation}
where $x\in\mathbb{R}^d$ and $v\in\Vcal=\mathbb{S}^{d-1}$. We define the scattering operator $\Lcal $ to have the
following form:
\begin{equation*}
\Lcal u(x,v) = \int_{\mathbb{S}^{d-1}} u(x,v') \, \rd \mu(v') - u(x,v)\,,
\end{equation*}
where $\mu(v)$ is the normalized measure on the velocity domain. The
scattering coefficient $\sigma^\delta(x)>0$ encodes the media
information, with $\delta$ denoting the smallest spatial scale.  The
operator has a nontrivial null space $\Null \mathcal{L}$ which
consists of functions that are constant in the velocity domain.  We
use this fact later to formally derive the diffusion limit of
RTE. 
In this article, we choose the operator $\Lcal$
  to have this specific form, for simplicity. In practice, especially
  in applications to atmosphere science, the radiative transfer
  equation often takes this operator to be $\Lcal u(x,v) =
  \int_{\mathbb{S}^{d-1}}k(x,v,v')u(x,v')\,\rd\mu(v')
  -\sigma(x,v)u(x,v)$ with the collision kernel
	\[
	k(x,v,v')=\frac{1-g^2}{4\pi(1+g^2 - 2g v\cdot v')^{3/2}}\,,
	\] 
	where the constant $g\in[-1,1]$ determines the relative strength
    of the forward and backward scattering.  This is the so-called
    Henyey-Greenstein model. The equation would have the same type of
    asymptotic limit (an elliptic equation) as long as
    $\sigma(x,v) = \int
    k(x,v,v')\rd\mu(v')$~\cite{agoshkov2012boundary,Lions}.
  


We now consider different limits for different regimes of the
parameters $(\vep,\delta)$.

\subsection{Diffusion Regime}

In the diffusion regime, we have  $\vep\to 0$ while $\delta$ is
fixed at a positive value. From~\eqref{eqn:RTE_scaled}, we see that 
$\mathcal{L}u(x,v) \sim 0$ in the leading order, meaning that $u(x,v)$
belongs to the null space of $\mathcal{L}$, and loses its velocity
dependence. By matching orders in the  classical asymptotic expansion
\[
u(x,v) = u_0(x,v)+\vep u_1(x,v) +\cdots\,,
\]
we obtain
\begin{alignat*}{2}
\mathcal{O}(1): & \quad u_0(x,v)\in\text{Null}\mathcal{L}\,,\quad && u_0(x,v) = u_0(x)\,,\\
\mathcal{O}(\vep):& \quad v\cdot \nabla_x u_0(x,v)=\sigma^\delta\mathcal{L}u_1\,,\quad &&u_1(x,v) = -\frac{1}{\sigma^\delta}v\cdot\nabla_xu_0(x)\,,\\
\mathcal{O}(\vep^2):& \quad v\cdot \nabla_x u_1(x,v)=\sigma^\delta\mathcal{L}u_2\,,\quad &&\smallint v\cdot \nabla_x u_1(x,v)\rd\mu{(v)} = 0\,.
\end{alignat*}
By substituting the $\mathcal{O}(\vep)$ equation into the
$\mathcal{O}(\vep^2)$ equation, one obtains a diffusion equation, as
follows.
\begin{theorem}[\cite{bardos1984diffusion}]
In the zero limit of $\vep$, the solution to~\eqref{eqn:RTE_scaled}
converges to the solution to the diffusion equation:
\begin{equation}\label{eqn:Diff}
\nabla_x \cdot \left(\frac{1}{\sigma^\delta(x)} \nabla_x u_0(x) \right) = 0\,,
\end{equation}
in the sense that
\[
\|u(x,v)-u_0(x)\|_{L_2(\rd x \, \rd\mu(v))} = \mathcal{O}(\vep)\,.
\]
\end{theorem}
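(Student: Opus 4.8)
The formal Hilbert expansion above already identifies the candidate limit $u_0$; what remains is to justify it rigorously with the stated rate. The plan is to combine a uniform a priori estimate with an energy estimate on the remainder of the expansion, exploiting two structural facts. First, the transport operator $v\cdot\nabla_x$ is skew-symmetric on $L_2(\rd x\,\rd\mu(v))$ up to boundary contributions. Second, $\Lcal$ is dissipative with a spectral gap: writing $\langle f\rangle = \int f\,\rd\mu$, one has $\langle\Lcal f,f\rangle_{L_2(\rd\mu)} = -\|f-\langle f\rangle\|_{L_2(\rd\mu)}^2$, so that $-\langle\Lcal f,f\rangle$ controls the component of $f$ orthogonal to $\Null\Lcal$.

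First I would record the a priori bound: multiplying \eqref{eqn:RTE_scaled} by $u$ and integrating over $\Dcal$ annihilates the transport term up to boundary contributions and yields $\int\sigma^\delta(x)\,\|u(x,\cdot)-\langle u\rangle(x)\|_{L_2(\rd\mu)}^2\,\rd x \lesssim \vep\times(\text{data})$, which together with a bound on $\langle u\rangle$ from the limiting elliptic structure shows that $\|u^\vep\|_{L_2(\Dcal)}$ is bounded uniformly in $\vep$ and that the velocity fluctuation of $u^\vep$ is $O(\sqrt\vep)$. This already yields, along a subsequence, weak convergence $u^\vep\rightharpoonup u_0$ with $u_0\in\Null\Lcal$; passing to the limit in the macroscopic (moment) identity $\nabla_x\cdot\langle v\,u^\vep\rangle = 0$ then identifies $u_0$ as a solution of \eqref{eqn:Diff}. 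To obtain the sharp $O(\vep)$ rate rather than mere convergence, a quantitative argument is needed.

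For the rate, write $u = u_0 + \vep u_1 + \vep^2 R_\vep$ with $u_0$ solving \eqref{eqn:Diff} (with data inherited from $u$) and $u_1 = -\tfrac{1}{\sigma^\delta}\,v\cdot\nabla_x u_0$ as derived above. Substituting into \eqref{eqn:RTE_scaled}, the $\mathcal{O}(1)$ terms cancel since $u_0\in\Null\Lcal$, the $\mathcal{O}(\vep)$ terms cancel by the definition of $u_1$, and dividing by $\vep^2$ leaves
\begin{equation*}
\vep\,v\cdot\nabla_x R_\vep - \sigma^\delta\Lcal R_\vep = -\,v\cdot\nabla_x u_1 =: S,
\end{equation*}
where $S$ depends only on $u_0$ and satisfies $\langle S\rangle = \nabla_x\cdot\!\left(\tfrac{1}{\sigma^\delta}\langle v\otimes v\rangle\nabla_x u_0\right) = 0$ precisely because $u_0$ solves \eqref{eqn:Diff}. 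Testing this equation against $R_\vep$ and invoking the two structural facts gives $\int\sigma^\delta\|R_\vep-\langle R_\vep\rangle\|_{L_2(\rd\mu)}^2\,\rd x = \langle S,\,R_\vep-\langle R_\vep\rangle\rangle \lesssim \|S\|\,\|R_\vep-\langle R_\vep\rangle\|$, so the microscopic part of $R_\vep$ is $O(1)$; the macroscopic part $\langle R_\vep\rangle$ is then controlled by taking the velocity average of the $R_\vep$-equation, which produces a diffusion equation for $\langle R_\vep\rangle$ with an $O(1)$ right-hand side, hence $\|R_\vep\|_{L_2(\Dcal)} = O(1)$ by the elliptic estimate. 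Combining, $\|u-u_0\|_{L_2} \le \vep\|u_1\| + \vep^2\|R_\vep\| = O(\vep)$.

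I expect the main obstacle to be the interplay of boundary conditions with regularity. On a bounded domain with inflow data the truncated expansion $u_0+\vep u_1$ does not match the prescribed boundary values, so one must insert a Knudsen (half-space, Milne-type) boundary-layer corrector and invoke its well-posedness and decay estimates --- the technical core of the Bardos--Santos--Sentis analysis cited here --- before the energy estimate for $R_\vep$ can be closed; on the whole space the analogue is a far-field matching condition. The regularity difficulty is that $\sigma^\delta$ is merely $L_\infty$ and oscillatory, so $u_0$ need not lie in $H^2$ and the source $S$ and correctors must be estimated in divergence form, without differentiating $\sigma^\delta$ --- or, in the $1+1$ dimensional setting of this paper, by using the fact that $\tfrac{1}{\sigma^\delta}\partial_x u_0$ is constant, so that $u_1$ is independent of $x$ and $S\equiv 0$, which trivializes the source in the remainder equation and leaves only the boundary-layer analysis to carry out.
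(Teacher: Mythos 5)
The paper does not actually prove this theorem: it presents only the formal Hilbert expansion preceding the statement and cites \cite{bardos1984diffusion} for the rigorous result. Your proposal correctly fleshes out that same expansion into the standard rigorous argument (spectral-gap dissipation for $\Lcal$, the remainder equation whose source has vanishing velocity average precisely because $u_0$ solves \eqref{eqn:Diff}, and Milne-type boundary-layer correctors), which is exactly the route of the cited reference, and your observation that the source $S$ vanishes identically in the $1+1$ setting is correct.
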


\begin{remark}\label{rmk:boudary}
Note that we did not account for boundary conditions in deriving the
limiting equation. In physical space, the derivation is valid when the
boundary conditions are periodic. Otherwise, one has to be careful
with the boundary influences and curvature effects.
The diffusion limit still holds outside the boundary layers, but the
convergence deteriorates when curvature corrections need to be taken
into account. These results can be found in \cite{Guo2017,Li2017} for
the case when domain is convex.
\end{remark}


\subsection{Homogenization Regime} 

When $\vep$ is fixed at a positive value while $\delta\to 0$,
homogenization limits are achieved;
see~\cite{dumas2000homogenization}.  We assume a two-scale media,
having dependence on a fast variable $y = \frac{x}{\delta}$ and a slow
variable $x$:
\[
\sigma^\delta(x) = \sigma \left(x,\frac{x}{\delta} \right),
\]
where $\sigma(x, \cdot )$ is assumed to be periodic (with respect to the
fast variable) for each $x$. Accordingly, we write the solution as
\[
u^\delta(x,v) = u(x,y,v) = u \left(x,\frac{x}{\delta},v \right).
\]
In this notation, the operator $\nabla_x$ is replaced by
$\nabla_x+\frac{1}{\delta}\nabla_y$ from chain rule. By substituting
into the equation, 
we have
\[
v\cdot \nabla_x u^\delta(x,y,v) + \frac{1}{\delta} v\cdot \nabla_y u^\delta(x,y,v) = \frac{\sigma(x,y)}{\vep}\mathcal{L}u^\delta\,.
\]
By substituting the asymptotic expansion
\[
u^\delta(x,y,v) = u_0(x,y,v) +\delta u_1(x,y,v) + \mathcal{O}(\delta^2)\,,
\]
into the equation above, and matching terms, we obtain
\begin{subequations}
  \begin{alignat}{2}
    \label{eq:su7}
    \mathcal{O}(1/\delta):& \quad  & v\cdot\nabla_yu_0(x,y,v)&=0\,,\\
    \label{eq:su8}
\mathcal{O}(1):& \quad  & v\cdot \nabla_x u_0-\frac{\sigma(x,y)}{\vep}\mathcal{L}u_0 &= v\cdot\nabla_yu_1(x,y,v)\,.
\end{alignat}
\end{subequations}
By applying the Fourier transform for the first equation
\eqref{eq:su7} with respect to the periodic variable $y$, we obtain
\[
i2\pi v\cdot \xi \hat{u}_0(x,\xi,v) = 0\,, \quad \mbox{for all  $\xi \in \mathbb{Z}^d$.}
\]
We note that for almost all fixed $v\in \mathbb{R}^d$, the multiplier
$i2\pi v\cdot \xi$ is non-vanishing for all $\xi\in \mathbb{Z}^d
\backslash\{0\}$, because otherwise there exists some $\xi\in
\mathbb{Z}^d\backslash \{0\}$ such that $i2\pi v\cdot \xi = 0 $ for a
positive measure set of $v$, which is impossible. Therefore, by
dividing the multiplier, we have for any $\xi\in
\mathbb{Z}^d\backslash \{0\}$ that
\[
\hat{u}_0(x,\xi,v) = 0\,, \quad \text{for almost all } (x,v)\,.
\]
That is, all Fourier modes are vanishing except for the one with $\xi
= 0$. This implies that $u_0$ is independent of the periodic variable
$y$, so we redefine the notation to omit this dependence:
\[
\quad u_0(x,y,v) = u_0(x,v) \,, \quad \text{for almost all } (x,v)\,.
\]
For the next order equation \eqref{eq:su8}, when we take the integral
over $y$, the RHS vanishes due to periodicity, and we obtain the
homogenized equation
\[
v\cdot \nabla_x u_0(x,v) = \frac{\sigma^\ast(x)}{\vep} \Lcal
u_0(x,v)\,, \quad\text{with}\quad \sigma^\ast = \smallint
\sigma(x,y)\rd{y}\,.
\]

The  derivation of homogenization limit presented above is validated
 in the following theorem. See
\cite[Theorem~3.1]{dumas2000homogenization} for a rigorous proof for
the time-dependent case.
\begin{theorem}
	Let $\sigma^\delta(x)$ be a bounded family of $L^\infty$
        functions such that
	\begin{equation*}
      \sigma^\delta  \xrightarrow{\delta\rightarrow 0} \sigma^\ast \,, \quad \text{in} \  L^\infty\ \text{weak-}\ast \,\text{topology},
	\end{equation*}
	then the solution $u^\delta(x,v)$ to the RTE
        \eqref{eqn:RTE_scaled} (with $\vep$ fixed) converges in
        $L^\infty$ weak-$\ast$ topology to $u(x,v)$, the solution to
        the following homogenized RTE:
	\begin{equation}\label{eqn:RTE_hom}
	\vep v\cdot \nabla_x u(x,v)  = \sigma^\ast(x)\Lcal u(x,v)\,.
	\end{equation}
\end{theorem}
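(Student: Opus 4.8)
The plan is to combine a uniform a priori estimate, the \emph{velocity averaging lemma} of kinetic theory, and uniqueness for the homogenized equation. The only genuinely delicate point will be passing to the limit in the bilinear term $\sigma^\delta\Lcal u^\delta$, where both factors converge only weakly.

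First I would record the compactness properties of the family $\{u^\delta\}$. Since $\vep>0$ is fixed and the data prescribing the problem (incoming boundary flux and/or source) are held fixed, the classical well-posedness theory for the stationary RTE~\cite{agoshkov2012boundary,Lions} supplies a maximum-principle bound $\|u^\delta\|_{L^\infty(\Dcal)}\le C$ with $C$ independent of $\delta$, so along a subsequence $u^\delta\rightharpoonup u$ weak-$\ast$ in $L^\infty(\Dcal)$. Reading the equation off, $\vep\,v\cdot\nabla_x u^\delta=\sigma^\delta\Lcal u^\delta$ is bounded in $L^\infty(\Dcal)\subset L^2_{\mathrm{loc}}$ as well, so both $u^\delta$ and its transport derivative $v\cdot\nabla_x u^\delta$ are bounded in $L^2_{\mathrm{loc}}$. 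The velocity averaging lemma then gives that, for every smooth weight $\psi(v)$, the moment $\int_{\mathbb{S}^{d-1}}u^\delta(x,v)\,\psi(v)\,\rd\mu(v)$ is relatively compact in $L^2_{\mathrm{loc}}(\R^d_x)$; expanding the velocity dependence of a general weight $\psi(x,v)$ in spherical harmonics and applying this mode by mode extends the statement to $x$-dependent weights. Passing to a further subsequence, all such velocity moments of $u^\delta$ thus converge strongly.

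The heart of the argument is to pass to the limit in the weak formulation. Testing the equation for $u^\delta$ against $\varphi\in C_c^\infty(\Dcal)$ and integrating by parts in $x$ yields
\[
-\vep\int u^\delta\,(v\cdot\nabla_x\varphi)\,\rd x\,\rd\mu(v)=\int\sigma^\delta(x)\,(\Lcal u^\delta)\,\varphi\,\rd x\,\rd\mu(v).
\]
The left-hand side converges to $-\vep\int u\,(v\cdot\nabla_x\varphi)\,\rd x\,\rd\mu(v)$ because $v\cdot\nabla_x\varphi\in L^1$. For the right-hand side I would exploit that $\Lcal$ is self-adjoint on $L^2(\rd\mu)$ and that $\sigma^\delta=\sigma^\delta(x)$ is a scalar with respect to the velocity integration, so that $\int(\Lcal u^\delta)\,\varphi\,\rd\mu(v)=\int u^\delta\,(\Lcal\varphi)\,\rd\mu(v)$ pointwise in $x$, and hence the right-hand side equals $\int\sigma^\delta(x)\,m^\delta(x)\,\rd x$ with $m^\delta(x):=\int u^\delta(x,v)\,(\Lcal\varphi)(x,v)\,\rd\mu(v)$, a velocity moment of the type controlled in the previous step. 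Then $m^\delta\to m:=\int u\,(\Lcal\varphi)\,\rd\mu(v)$ strongly in $L^1_{\mathrm{loc}}$ with uniformly compact support, while $\sigma^\delta\rightharpoonup\sigma^\ast$ weak-$\ast$ in $L^\infty$; pairing a weak-$\ast$ convergent $L^\infty$ sequence with a strongly convergent $L^1$ sequence gives $\int\sigma^\delta m^\delta\,\rd x\to\int\sigma^\ast m\,\rd x=\int\sigma^\ast(x)\,(\Lcal u)\,\varphi\,\rd x\,\rd\mu(v)$, using self-adjointness of $\Lcal$ once more at the limit level. The key feature of this rearrangement is that the oscillating coefficient $\sigma^\delta$ is always tested against a strongly convergent velocity moment of $u^\delta$, so no limit of a product of two merely weakly convergent factors is ever needed. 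This is the homogenization counterpart of the formal observation above that the leading profile $u_0$ loses its fast-variable dependence, and it is the step I expect to be the main obstacle --- the remaining subtleties being the justification of $x$-dependent weights in the velocity averaging lemma and the continuity of the incoming trace under the convergence.

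Finally, I would verify that the limit $u$ satisfies the prescribed incoming data --- for a first-order transport equation the incoming trace passes to the limit along characteristics --- so that $u$ is a weak solution of $\vep\,v\cdot\nabla_x u=\sigma^\ast\Lcal u$. Since this homogenized problem (with $\vep>0$ fixed) has a unique solution, the limit is independent of the subsequence and the whole family $u^\delta$ converges weak-$\ast$ in $L^\infty$ to $u$, which is precisely the asserted mode of convergence. A fully rigorous treatment of the analogous time-dependent problem, whose stationary counterpart is what is needed here, is given in~\cite[Theorem~3.1]{dumas2000homogenization}.
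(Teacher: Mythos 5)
Your proposal is essentially correct, but it follows a different route from what the paper presents. The paper does not give a rigorous proof of this theorem at all: it offers only the formal two-scale expansion preceding the statement (introducing the fast variable $y=x/\delta$, showing via Fourier analysis of $v\cdot\nabla_y u_0=0$ that the leading profile is independent of $y$, and averaging the next-order equation over the cell to read off $\sigma^\ast$), and then defers the rigorous argument to \cite[Theorem~3.1]{dumas2000homogenization}. Your sketch is instead a direct compactness proof: uniform $L^\infty$ bounds, the velocity averaging lemma to upgrade the velocity moments of $u^\delta$ to strong convergence, self-adjointness of $\Lcal$ to move the collision operator onto the test function so that the oscillating coefficient $\sigma^\delta$ is always paired with a strongly convergent moment, and uniqueness of the homogenized problem to pass from subsequential to full convergence. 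This is in fact the standard machinery behind the cited Dumas--Golse result, so you have supplied the substance that the paper outsources; what the paper's formal expansion buys is an explicit identification of $\sigma^\ast$ as the cell average in the periodic two-scale setting, whereas your argument works for any weak-$\ast$ convergent family and actually establishes the asserted mode of convergence. Two points you flag but should not leave entirely open in a full write-up: the weak formulation must use test functions supported up to $\Gamma_-$ (not just $C_c^\infty(\Dcal)$) so that the fixed inflow datum survives the limit and uniqueness applies, and well-posedness uniformly in $\delta$ requires $\sigma^\delta$ bounded below away from zero, which the theorem's hypotheses as stated do not make explicit.
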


Because of the oscillations of scale $\delta$ in the media
$\sigma^\delta(x)$, the solution $u^\delta$ is rough. However, such
oscillations are homogenized in the $\delta\to0$ limit, and the
solution $u^\delta$ becomes close to the solution
to~\eqref{eqn:RTE_hom}, which has no oscillation.

In general, the limiting regime $\vep \rightarrow 0,\delta \rightarrow
0$ can be taken through different routes. One may fix $\delta=1$ and
send $\vep$ to zero to reach the diffusion limit \eqref{eqn:Diff} and
then send $\delta \rightarrow 0$, shown as solid arrow in
Figure~\ref{fig:diagram}, Alternatively, one could fix $\vep=1$ and
send $\delta$ to zero to reach the homogenization limit
\eqref{eqn:RTE_hom}, then send $\vep \rightarrow 0$. This path is
shown as the dashed arrow in Figure~\ref{fig:diagram}. Additionally,
one could send both $\vep$ and $\delta$ simultaneously to zero at
different rates, shown as dotted arrows in
Figure~\ref{fig:diagram}. All these routes, though considering the
same regime $\vep,\delta \rightarrow 0$, do not necessarily end up at
the same limit. In fact, Goudon and
Mellet~\cite{goudon2001diffusion,goudon2003homogenization} showed that
by following the route $\vep = \delta \rightarrow 0$, RTE
\eqref{eqn:RTE} ends up as an effective drift diffusion equation,
while Abdallah, Puel and Vogelius~\cite{abdallah2012diffusion}
followed the route $\delta \gg\vep\rightarrow 0$ to obtain an
effective diffusion equation.



\begin{figure}[htb]
\centering \includegraphics[width =
  0.9\textwidth]{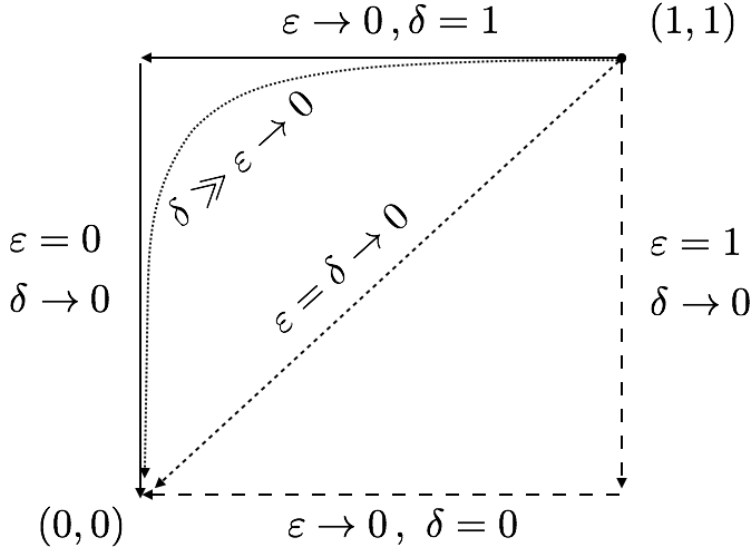}
\caption{
  The diffusion limit theory is established by Bardos, Santos and
  Sentis~\cite{bardos1984diffusion} through the horizontal arrow
  $\varepsilon \rightarrow 0,\delta = 1$. Dumas and
  Golse~\cite{dumas2000homogenization} considered the vertical arrow
  $\vep = 1,\delta \rightarrow 0$. Goudon and
  Mellet~\cite{goudon2001diffusion,goudon2003homogenization}
  considered the diagonal path $\vep=\delta \rightarrow 0$, while
  Abdallah, Puel and Vogelius~\cite{abdallah2012diffusion} studied the
  curved path $\delta \gg \vep \rightarrow0$. Different limiting
  equations might arise for different regimes; the diagram does not
  commute.
}
\label{fig:diagram}
\end{figure}

\subsection{Low Rank of the PDE Solution Map}\label{sec:low_rank}
An AP scheme was proposed in~\cite{qinMMs} to deal with the regime
$\delta\gg \vep \rightarrow 0$, while numerical schemes for other
regimes remain open. In practice, given a particular pair
$(\vep,\delta)$ that is close to zero, it is impossible to determine {\em which} limiting equation is the most
appropriate one to use as an approximation to the solution of
\eqref{eqn:RTE}. The analysis-based approach of designing AP schemes
is therefore not feasible. We seek to develop instead a universal {\em
  numerical} approach that is valid in different limiting regimes.

We start by considering the diagram in
Figure~\ref{figure:low-rank}.
\begin{figure}[htb]
\centering
\includegraphics[width = 0.9\textwidth]{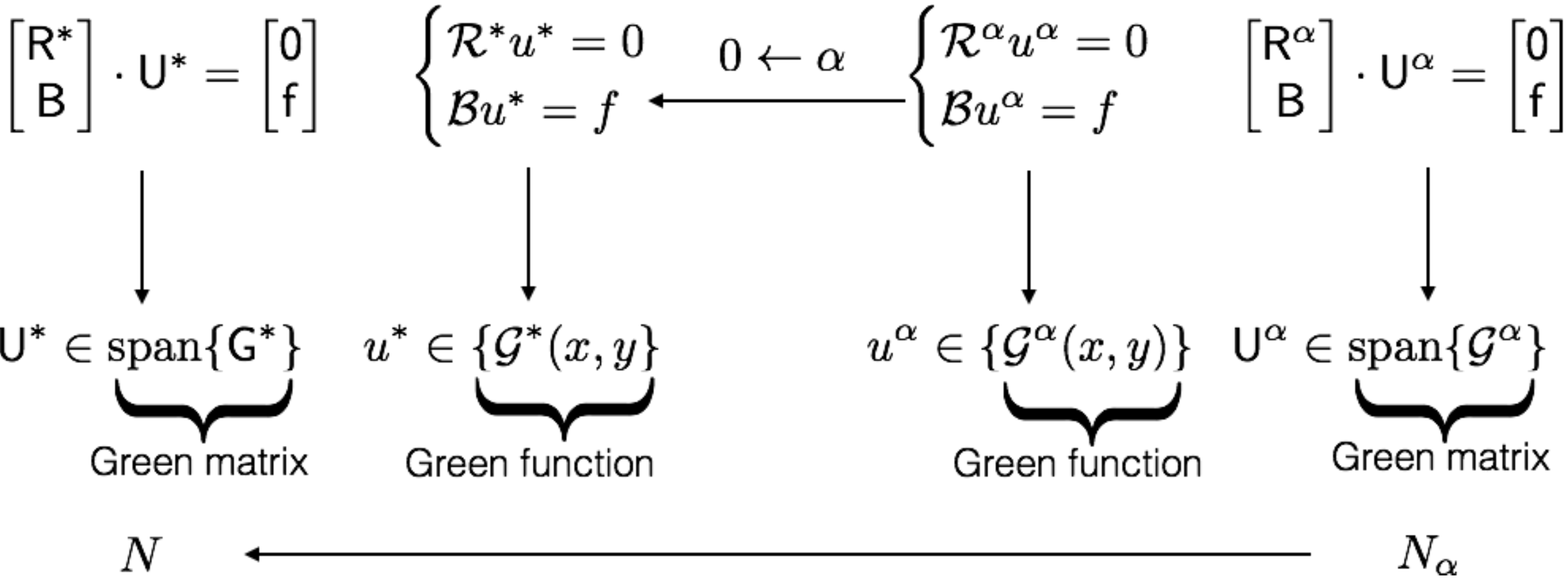}
\caption{Low rankness of systems with small parameters.}
\label{figure:low-rank}
\end{figure}
Assume we are given an equation $\mathcal{R}^\alpha u^\alpha = 0$
where $\alpha:=\min\{\vep,\delta\}$ denotes the smallest parameters in
the equation operator $\mathcal{R}^\alpha$, together with a boundary
operator $\mathcal{B}$ such that $\mathcal{B} u^\alpha = f$ for some
given boundary data $f$. The solution can be represented as a
convolution of $f$ with all Green's functions $\mathcal{G}^\alpha$, so
it lies in the space spanned by $\mathcal{G}^\alpha$. To find an
accurate numerical approximation to this solution, the operator
$\mathcal{R}^\alpha$ is translated to $\mathsf{R}^\alpha$, a matrix
with $N_\alpha \ge \frac{1}{\alpha}$ columns. Thus, the numerical
solution $\mathsf{U}^\alpha$ is a vector of length $N_\alpha$.

On the other hand, assume the equation is ``homogenizable'' and there
exists an asymptotic limit, an operator $\mathcal{R}^\ast$ so that the
solution $u^\ast$ to equation $\mathcal{R}^\ast u^\ast=0$ with
boundary condition $\mathcal{B} u^\ast = f$ is asymptotically
close to $u^\alpha$. The computation of $u^\ast$ is expected to be
significantly cheaper, since the limiting equation no longer has small
parameters and is expected to be smooth. Thus, the numerical solution
$\mathsf{U}^\ast$ requires merely $N=\mathcal{O}(1)$ degrees of freedom to
represent $u^\ast$ accurately.

Since $u^\ast$ is close to $u^\alpha$ for regular $f$,
the numerical solution spaces captured by the range of matrices
$\mathsf{G}^\ast$ and $\mathsf{G}^\alpha$ are expected to be almost the
same. On the other hand, although $\mathsf{G}^\alpha$ contains many more
degrees of freedom (columns) than $\mathsf{G}^\ast$, the former is
``compressible'' and the latter is a good low-rank approximation to
it. This argument can be made rigorous with the definition of
``numerical rank of an operator,'' a concept that is
equivalent to the ``Kolmogorov $n$-width''. More details can be found
in \cite{chen2018random}.

\subsection{Random Sampling for Low-Rank Structure}

Knowing that an operator is approximately of low rank does not mean
that it is easy to find a low-rank approximation quickly. In the
linear algebra setting, finding the low-rank structure is equivalent
to finding the singular vectors of a matrix that correspond to the
largest singular values. 
For an $n\times m$ matrix, the singular value
  decomposition costs $\mathcal{O}(nm \min\{n, m\})$ operations,
  making the computation expensive for large matrices.  When the
approximate rank is known to be $r \ll n$, a randomized SVD (RSVD)
solver based on a sketching procedure is available, whose cost depends
on $r$. Properties of this approach are described in the following
result.
\begin{theorem}[Corollary~10.9 of \cite{halko2011finding}]
Suppose that the matrix $\mathsf{A}\in\mathbb{R}^{n \times m}$ has
singular values ordered as follows: $\sigma_1\geq \sigma_2\geq
\cdots$.  Assume that the target rank $r$ and oversampling parameter
$p\geq 4$ are positive integers such that $k:=r+p \le
\min\{m,n\}$. Then with probability at least $1-6e^{-p}$, we have
\[
\|\mathsf{A}-\mathsf{Q}\mathsf{Q}^\top\mathsf{A}\|_2\leq \left(
1+17\sqrt{1+r/p} \right)\sigma_{r+1} +
\frac{8\sqrt{k}}{p+1}\Bigl(\sum_{j>r}\sigma_j^2\Bigr)^{1/2} \,,
\]
where $\mathsf{Q} \in \mathbb{R}^{m\times k}$ is a matrix with
orthonormal columns whose column space matches that of
$\mathsf{A}\Omega$, where $\Omega$ is a random matrix of dimension $m
\times k$ with entries drawn from an independent identical distributed
(i.i.d.)  normal distribution.
\end{theorem}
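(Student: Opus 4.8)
The plan is to reduce the estimate to a deterministic structural bound on the randomized projection and then invoke standard concentration facts for Gaussian matrices. Fix a (thin) singular value decomposition $\mathsf{A} = \mathsf{U}\Sigma\mathsf{V}^\top$ and partition $\Sigma = \mathrm{diag}(\Sigma_1,\Sigma_2)$ so that $\Sigma_1 \in \mathbb{R}^{r\times r}$ collects the $r$ dominant singular values; thus $\|\Sigma_2\|_2 = \sigma_{r+1}$ and $\|\Sigma_2\|_F^2 = \sum_{j>r}\sigma_j^2$. Partition $\mathsf{V} = [\mathsf{V}_1\ \mathsf{V}_2]$ conformally and set $\Omega_1 := \mathsf{V}_1^\top\Omega \in \mathbb{R}^{r\times k}$ and $\Omega_2 := \mathsf{V}_2^\top\Omega$. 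Since $\mathsf{Q}\mathsf{Q}^\top$ is the orthogonal projector onto $\mathrm{Range}(\mathsf{A}\Omega)$, the error $\|\mathsf{A} - \mathsf{Q}\mathsf{Q}^\top\mathsf{A}\|_2$ depends on $\Omega$ only through $\Omega_1$ and $\Omega_2$; and because the columns of $\mathsf{V}$ are orthonormal, $\Omega_1$ and $\Omega_2$ are \emph{independent} matrices with i.i.d.\ standard normal entries, of sizes $r\times k$ and $(\min\{m,n\}-r)\times k$.

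First I would establish the deterministic inequality
\begin{equation*}
\|\mathsf{A} - \mathsf{Q}\mathsf{Q}^\top\mathsf{A}\|_2 \;\le\; \sigma_{r+1} + \|\Sigma_2\,\Omega_2\,\Omega_1^{\dagger}\|_2 ,
\end{equation*}
valid whenever $\Omega_1$ has full row rank. The mechanism is monotonicity of the projection residual: enlarging $\mathrm{Range}(\mathsf{A}\Omega)$ only decreases the residual, so it suffices to compute the residual against one convenient subspace contained in $\mathrm{Range}(\mathsf{A}\Omega)$, chosen so that the two SVD blocks decouple; a short computation with orthoprojectors together with $\sqrt{a^2+b^2}\le a+b$ then gives the displayed form. (This is Theorem~9.1 of \cite{halko2011finding}.)

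The probabilistic step estimates $\|\Sigma_2\Omega_2\Omega_1^{\dagger}\|_2$. Conditioning on $\Omega_1$ and treating $\Omega_2$ as the randomness, two facts about a Gaussian matrix $\mathsf{M}$ do the work: $\mathbb{E}\|\mathsf{S}\mathsf{M}\mathsf{T}\|_2 \le \|\mathsf{S}\|_2\|\mathsf{T}\|_F + \|\mathsf{S}\|_F\|\mathsf{T}\|_2$ for fixed $\mathsf{S},\mathsf{T}$, which yields
\begin{equation*}
\mathbb{E}\bigl[\|\Sigma_2\Omega_2\Omega_1^{\dagger}\|_2 \,\big|\, \Omega_1\bigr] \;\le\; \sigma_{r+1}\,\|\Omega_1^{\dagger}\|_F + \Bigl(\textstyle\sum_{j>r}\sigma_j^2\Bigr)^{1/2}\|\Omega_1^{\dagger}\|_2 ;
\end{equation*}
and the map $\mathsf{M}\mapsto\|\Sigma_2\mathsf{M}\,\Omega_1^{\dagger}\|_2$ is Lipschitz in the Frobenius metric with constant $\sigma_{r+1}\|\Omega_1^{\dagger}\|_2$, so Gaussian concentration of Lipschitz functionals gives a sub-Gaussian upper tail above that conditional mean. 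It then remains to control $\|\Omega_1^{\dagger}\|_F$ and $\|\Omega_1^{\dagger}\|_2$ — inverse moments and lower tails for the singular values of the short, fat Gaussian matrix $\Omega_1\in\mathbb{R}^{r\times(r+p)}$. This is exactly where the oversampling $p\ge 4$ enters: it makes $\mathbb{E}\|\Omega_1^{\dagger}\|_F^2 = r/(p-1)$ finite and makes the lower-tail bound $\mathbb{P}\{\|\Omega_1^{\dagger}\|_2 \ge \tfrac{e\sqrt{r+p}}{p+1}\,t\}\le t^{-(p+1)}$ (and the analogous bound for $\|\Omega_1^{\dagger}\|_F$) light enough to use. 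Finally I would assemble everything: substitute the deterministic inequality, choose the deviation parameters in the Gaussian–Lipschitz tail and in the two lower-tail bounds for $\Omega_1^{\dagger}$ so that the total failure probability is at most $6e^{-p}$, and on the complementary event collect constants — matching $1+17\sqrt{1+r/p}$ against the factor multiplying $\sigma_{r+1}$ and $\tfrac{8\sqrt{k}}{p+1}$ against the factor multiplying $(\sum_{j>r}\sigma_j^2)^{1/2}$.

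The main obstacle is controlling $\|\Omega_1^{\dagger}\|_2$, i.e., a lower-tail bound for the smallest singular value of a short-and-fat Gaussian matrix. In contrast to the largest singular value, this is not a Lipschitz functional that concentrates automatically; it needs a dedicated argument — through the Wishart matrix $\Omega_1\Omega_1^\top$ and chi-square concentration, or an $\varepsilon$-net estimate — and this is precisely where the hypothesis $p\ge 4$ is used and where keeping the constants explicit requires care. By comparison, the orthoprojector manipulations in the deterministic step and the union-bound bookkeeping at the end are routine.
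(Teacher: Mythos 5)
The paper does not prove this result; it is quoted verbatim as Corollary~10.9 of \cite{halko2011finding}, so there is no in-paper argument to compare against. Your outline faithfully reconstructs the proof strategy of that source — the deterministic split $\|\mathsf{A}-\mathsf{Q}\mathsf{Q}^\top\mathsf{A}\|_2\le\sigma_{r+1}+\|\Sigma_2\Omega_2\Omega_1^{\dagger}\|_2$ followed by Gaussian expectation/concentration bounds and the inverse-moment and tail estimates for $\Omega_1^{\dagger}$ where $p\ge 4$ enters — and correctly identifies the smallest-singular-value lower tail of the short fat Gaussian block as the delicate step, so it is an accurate high-level account of the standard proof.
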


This theorem suggests that if an operator has approximate low rank,
random sampling can find its range accurately, with overwhelming
probability. A simplified estimate shows that for oversampling
parameter $p$ as small as $5$, with at least $99.8\%$ confidence, the
error $\| \Amat - \Qmat\Qmat^\top \Amat \|_2$ can be controlled by
$\left( 1+ 11\sqrt{ \min\{m,n\}(r+5)}\right)\sigma_{r+1}$.
Algorithm~\ref{alg:RSVD}, proposed in \cite{halko2011finding}, finds
the rank-$r$ approximation to $\Amat$, which we denote by $\Amat_r$.
\begin{algorithm}[h]
	\caption{Randomized Singular Value Decomposition (RSVD)}\label{alg:RSVD}
	\begin{algorithmic}[1]
	  \State Given matrix $\Amat\in\mathbb{R}^{n\times m}$, target rank $r$ and oversampling parameter $p$;
         \State Set $k := r+p$;
		\State \textbf{Stage I:}
		\Indent
		\State Generate matrix $\Omega\in\mathbb{R}^{m\times k}$ with i.i.d. normal variables and compute $\Ymat = \Amat \Omega \in \R^{n \times k}$;
		\State Perform QR-decomposition and obtain $\left[\Qmat,\Rmat\right] = qr(\Ymat,0)$, where $\Qmat \in \R^{n \times k}$ has orthonormal columns;
		\EndIndent
		\State \textbf{Stage II:}
		\Indent
		\State Form $\Bmat = \Qmat^\ast \Amat \in \mathbb{R}^{k \times m}$;
		\State Compute SVD $\tilde{\Umat}\Sigmamat\Vmat^\ast$ of $\Bmat$, where $\tilde{U} \in \R^{k\times k}$ and $V \in \R^{m \times k}$ are matrices with orthonormal columns, and $\Sigma \in \R^{k \times k}$ is a diagonal matrix with nonnegative diagonals;
		\State Compute $\Umat = \Qmat\tilde{\Umat} \in \R^{n \times k}$, noting that $\Umat$ has orthonormal columns;
		\EndIndent
		\State \textbf{Return:} $\Amat_r = \sum_{i=1}^r \Umat_{\cdot i} \Sigmamat_{ii} \Vmat_{\cdot i}^T$, where $\Umat_{\cdot i}$ and $\Vmat_{\cdot i}$ denotes column $i$ of $\Umat$ and $\Vmat$, respectively.
	\end{algorithmic}
\end{algorithm}

There are two crucial features of the algorithm. First, the amount of
computation depends crucially on the rank $r$, and is generally much
less expensive than a full SVD. Second, it can be implemented without
explicit knowledge of the matrix $\Amat$. Rather, we need only to be
able to compute the products of $\Amat$ with the random matrix
$\Omega$.  These properties make the algorithm well suited for use in
the numerical homogenization of PDEs.

RSVD is not the only algorithm that achieves the
  decomposition at the cost of $O(r)$ matrix-vector
  multiplications. Another approach is to explore a Krylov subspace of
  rank (slightly) greater than $r$ by initializing with a random
  vector and multiplying repeatedly by $\Amat$. The first $r$ singular
  values of the resulting matrix can be taken as approximating the
  leading $r$ singular values of $\Amat$. A variation of RSVD incorporates power
  iteration, which is similar to constructing a Krylov subspace;
  see~\cite[Sections~4.5 and 10.4]{halko2011finding}. We restrict here
  to the original RSVD algorithm for its simplicity and effectiveness
  in our numerical tests.

Translation of the randomization idea to the PDE setting is not
straightforward. First, a PDE solution map is a continuous operator,
not a matrix. Discretization of the space and the choice of norm is
not always obvious. Redesigning the scheme to deal with an operator
may also be difficult.  While the adjoint of a matrix is easy to
define, the adjoint of an operator may not be so easy to
define. Second, a numerical homogenization scheme needs to find a
solution quickly for an arbitrary boundary term or source, and in
pursuit of that goal, the PDE solution map may not be the right
operator to ``compress''. In our approach, discussed in
Section~\ref{sec:method}, we compress a different operator: the
boundary-to-boundary map, 
Schwarz iteration scheme.

For the present, given an operator $\mathcal{A}:
  \mathcal{X}\to \mathcal{Y}$, we assume that the adjoint
  $\mathcal{A}^\ast$ is known. We also assume $\mathcal{X}$ and $\mathcal{Y}$ are finite dimensional and there is an inner
    product structure on $\mathcal{X}$ which allows us to efficiently
    draw random samples. We can then translate
  Algorithm~\ref{alg:RSVD} to the operator setting to find the
  corresponding Kolmogorov $r$-width operator $\mathcal{A}_r$ in
  Algorithm~\ref{alg:operator_RSVD}. (Note that upon fine discretization to meet the preset precision threshold, $\mathcal{X}$ can always be made finite dimensional.)
\begin{algorithm}[h]
	\caption{Randomized Operator Rank Capture}\label{alg:operator_RSVD}
	\begin{algorithmic}[1]
		\State Given an operator $\mathcal{A}:\;\mathcal{X}\to\mathcal{Y}$, where $\mathcal{X}$ and $\mathcal{Y}$ are finite dimensional function spaces. Define target rank $r$ and oversampling parameter $p$, and set $k:=r+p$;
		\State \textbf{Stage I:}
		\Indent
		\State Generate $k$ samples $\omega_1,\dotsc,\omega_{k} \in \mathcal{X}$ and calculate  $\{\mathcal{A}\omega_1\,,\cdots \mathcal{A}\omega_{k}\}$;
		\State Perform Gram-Schmit orthogonalization to obtain $\{q_1\,,\dotsc, q_{k}\}$;
		\EndIndent
		\State \textbf{Stage II:}
		\Indent
		\State Act $\mathcal{A}^\ast$ on $\{q_i\}$ to obtain $\{\mathcal{A}^\ast q_1\,,\cdots,\mathcal{A}^\ast q_{k}\}$;
                \State Seek  $\tilde{u}_i \in \R^{k}$, $\sigma_i \in \R$, and $v_i \in \mathcal{X}$, $i=1,2,\dotsc,k$  such that  $\sum_{i=1}^{k} \tilde{u}_i \sigma_i v_i = (\mathcal{A}^\ast q_1\,,\cdots,\mathcal{A}^\ast q_{k})^T$;
                \State Denoting $\tilde{U} = [ \tilde{u}_1 , \dotsc, \tilde{u}_{k} ] \in \R^{k \times k}$, define $u_j = \sum_{i=1}^{k} q_i \tilde{U}_{ij}$, $j=1,2,\dotsc,k$;
		\EndIndent
		\State \textbf{Return:} $\mathcal{A}_r = \sum_{i=1}^r u_i \sigma_i v_i$.
	\end{algorithmic}
\end{algorithm}
\begin{remark}
	Both Algorithm \ref{alg:RSVD} and its operator counterpart Algorithm~\ref{alg:operator_RSVD} require an input target rank $r$. Such parameter could be obtained from a priori estimate for elliptic type equations~\cite{babuska2011optimal}. However, such guidance is unfortunately absent in the transport equation case. To address this issue, one may consider an adaptive randomized range finder (Algorithm 4.2 of \cite{halko2011finding}), in which a tolerance level is preset and the target rank is determined on the fly.
\end{remark}

\section{Low-Rank Schwarz Domain Decomposition Method}\label{sec:method}
We consider the boundary value problem for RTE \eqref{eqn:RTE_scaled} in the following form:
\begin{subequations}\label{eqn:RTE_bd}
  \begin{alignat}{2}
\vep v\cdot\nabla_x u(x,v) & = \sigma^\delta(x)\Lcal u(x,v) \,, \quad  && (x,v)\in \Dcal =\Kcal\times\Vcal\\
u(x,v) & = \phi(x,v) \,, && (x,v)\in \Gamma_-,
\end{alignat}
\end{subequations}
where the partial boundary $\Gamma_-$ is defined by
\begin{equation}\label{eqn:inflow_bd}
	\Gamma_-: = \{(x,v)\in \partial\Kcal \times \Vcal: -n_x\cdot v > 0\}\,.
\end{equation}
Here, $n_x$ is the outer normal vector at location $x\in \partial
\Kcal$, and $v\cdot n_x$ is expected to be negative for all incoming
velocities. Similarly, the outflow coordinates are collected in the
complementary partial boundary $\Gamma_+: = \{(x,v)\in \partial\Kcal
\times \Vcal: n_x\cdot v > 0\}$. Problem \eqref{eqn:RTE_bd} is well
posed, as we show in the Appendix.

We start in Section~\ref{sec:DD} by introducing domain decomposition
and the classical Schwarz method
(Algorithm~\ref{alg:Schwarz0}). Section~\ref{sec:introduce_Pcal}
identifies the operator that needs to be compressed for efficient
implementation of this method, while Section~\ref{sec:adjoint} derives
the adjoint operator. These elements together make it possible to
design the low-rank Schwarz method (Algorithm~\ref{alg:re_Schwarz}),
which is presented in Section~\ref{sec:reduced_schwarz}.

\subsection{Schwarz Domain Decomposition Method}\label{sec:DD}

To solve~\eqref{eqn:RTE_bd}, we first consider an overlapping domain
decomposition of the physical space $\Kcal$,
\[
\Kcal = \bigcup_{m=1}^M \Kcal_m\,,
\]
where $\{ \Kcal_m \}_{m=1,2,\dotsc,M}$ forms an open cover of
$\Kcal$. We assume in the remainder of the discussion that the
subdomains are ordered so that $\Kcal_m$ can overlap only with
$\Kcal_{m-1}$ and $\Kcal_{m+1}$. We decompose the domain $\Dcal = \Kcal\times\Vcal$ accordingly as:
\begin{equation} \label{eqn:dd} \Dcal = \bigcup_{m=1}^M \Dcal_m =
  \bigcup_{m=1}^M (\Kcal_m \times \Vcal)\,.
\end{equation}
We denote by $\Gamma_{m,\pm}$ the outflow and inflow boundaries for
$\Dcal_m$. We define those parts of the subdomains $\Kcal_m$ and
$\Dcal_m$ that do not overlap with their neighbors as follows:
\begin{equation} \label{eq:Dsm}
  \Kcal_m^\text{s} := \Kcal_m \setminus \left(\Kcal_{m-1} \cup \Kcal_{m+1} \right), \quad
  \Dcal^\text{s}_m := \Dcal_m\setminus\left(\Dcal_{m-1}\cup\Dcal_{m+1}\right) =
  \Kcal^\text{s}_m \times \Vcal.
\end{equation}
Since the inflow boundary $\Gamma_{m\pm1,-}$ of neighboring domain is
partially inside $\Dcal_m$, we further define
\begin{equation}\label{def:E}
\Ecal_{m,m-1}:=\Dcal_m \cap \Gamma_{m-1,-}\,,\quad \Ecal_{m,m+1}:=\Dcal_m \cap \Gamma_{m+1,-}\,,
\end{equation}
so that the outflow boundary $\Gamma^\text{s}_{m,+}$ of
$\Dcal_m^\text{s}$ is the union of the domains  above, that is,
\[
\Gamma^\text{s}_{m,+} = \Ecal_{m,m-1}\cup \Ecal_{m,m+1}\,.
\]
One can see that the restriction on $\Gamma^\text{s}_{m,+}$ of the
local solution in the domain $\Dcal_m$, would partially provide the
inflow boundary condition for its neighboring domain
$\Dcal_{m\pm1}$. This fact will be used later to update local
solutions in each iteration of Schwarz method.
Figure~\ref{fig:overlap} illustrates our setup.

\begin{figure}
\centering
\includegraphics[width=0.6\textwidth]{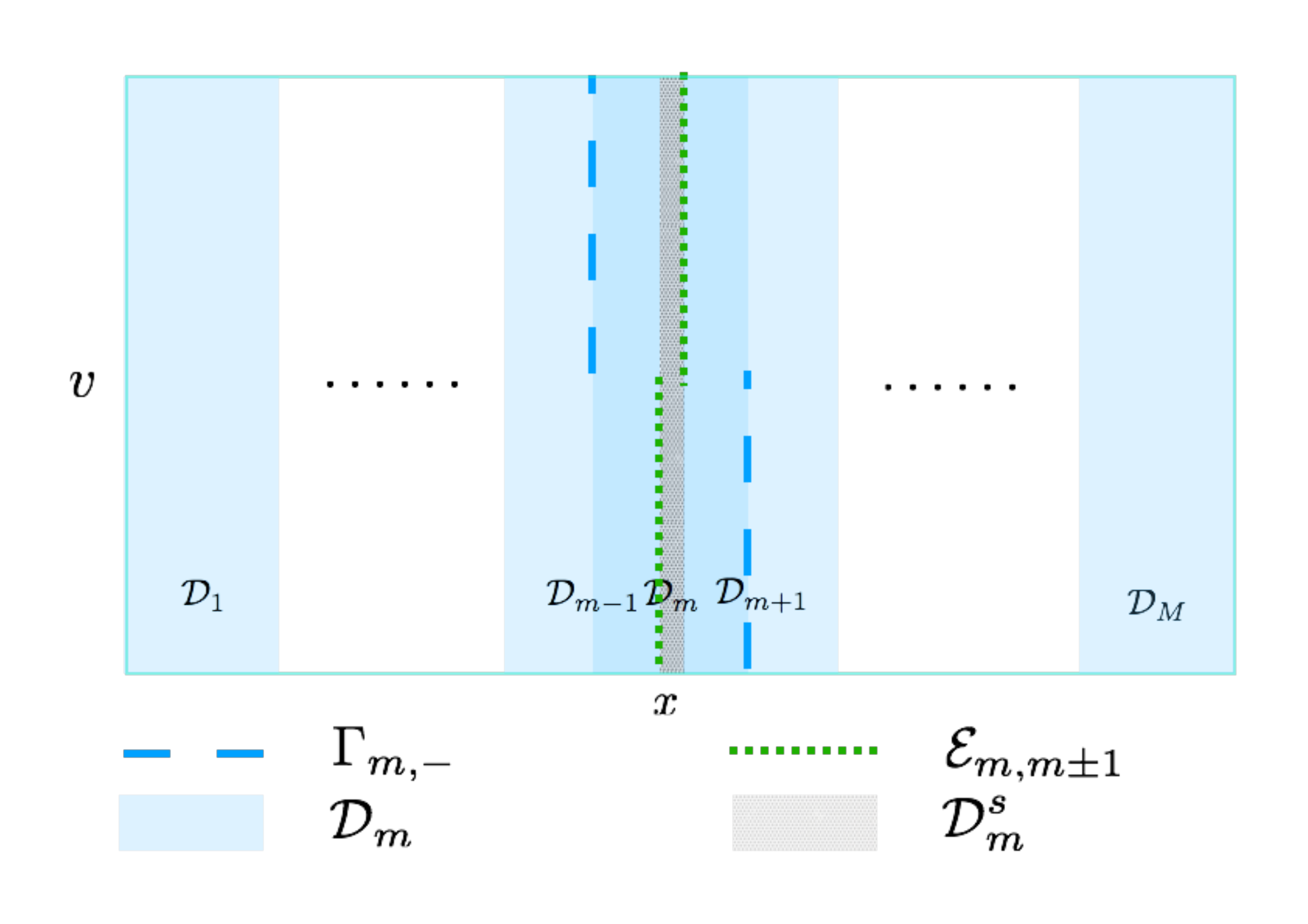}
\caption{An overlapping domain decomposition of $\Dcal$. The
  horizontal direction and vertical direction represent $\Kcal$ and
  $\Vcal$, respectively. For simplicity, we consider only the case in
  which both $\Kcal$ and $\Vcal$ are subsets of the real line. Each
  subdomain $\Dcal_m$ overlaps the neighboring two subdomains
  $\Dcal_{m-1}$ and $\Dcal_{m+1}$ (except that $\Dcal_1$ and
  $\Dcal_{M}$ have one neighboring subdomain). The inflow boundary for
  $\Dcal_m$, $\Gamma_{m,-}$, and the inflow boundary for $\Dcal_{m\pm
    1}$ confined in $\Dcal_m$, which is denoted by $\Ecal_{m,m\pm 1}$,
  is also illustrated.}\label{fig:overlap}
\end{figure}

The Schwarz method is an iterative algorithm that updates solutions
confined to different subdomains by exchanging information between
iterations. In this setting, we update the values on $\Ecal_{m,m\pm1}$
using the newly computed solutions in each subdomain, repeating the
process until the solution converges. To be more specific, we denote
by $\phi^k_m$ the restriction of the solution at the $k$th step of the
Schwarz process on patch $m$, confined to the partial boundary
$\Gamma_{m,-}$, that is,
\[
\phi^k_{m} := u^k|_{\Gamma_{m,-}}\, \quad \phi^k := \{\phi^k_1, \dotsc, \phi^k_M\}.
\]
The $k$th iteration of the Schwarz method can be expressed as a
mapping from the $\phi^k$ to $\phi^{k+1}$, obtained by exchanging the
boundary conditions between adjacent patches and solving the RTE. We
denote this mapping by $\Pcal$, as follows:
\[
\phi^{k+1}=\Pcal (\phi^{k}).
\]
We terminate at an iteration $k$ for which the difference between
$\phi^k$ and $\phi^{k+1}$ falls below a given tolerance.

The evaluation of map $\Pcal$ amounts to evaluation and assembly of
the individual maps $\Pcal_m(\phi_m)$, $m=1,2,\dotsc,M$, and is
defined by the following procedure.

\medskip

\begin{itemize}
\item [\textbf{step 1}] Define the following solution map
\begin{equation}
\begin{aligned}
\Scal_m : \quad &  L^2(\Gamma_{m,-}; |n\cdot v| ) &\rightarrow \quad & H_A(\Dcal_{m}) \\
& \phi &\mapsto \quad & u
\end{aligned}\,.
\end{equation}
by solving RTE on domain $\Dcal_m$:
\begin{equation}\label{eqn:RTE_local}
\begin{cases}
v\cdot \nabla_x u(x,v)  = \frac{1}{\vep} \sigma^\delta(x)\Lcal u(x,v) &\quad \text{in} \quad \Dcal_m \\
u(x,v) = \phi(x,v) &\quad \text{on} \quad \Gamma_{m,-}
\end{cases}\,,
\end{equation}
Obtain the solution $u_m = \Scal_m(\phi_m)$ in each subdomain
$\Dcal_m$ with boundary conditions $\phi_{m}$. Here $H_A(\Dcal_m)$ is
a functional space where the trace of $u$ over the boundary
$\Gamma_{m,\pm}$ is well defined (see the Appendix for details).

\item[\textbf{step 2}] Confine the solution $u_m$ on the boundaries:
\begin{equation}\label{eqn:MidSchwarz1}
\begin{aligned}
\phi_{m-1}^+ = u_m \quad \text{on}\; \Ecal_{m,m-1} \quad\text{and}\quad \phi_{m+1}^+ = u_m \quad \text{on}\; \Ecal_{m,m+1}\,,
\end{aligned}
\end{equation}
where $\phi^+_{m-1}$ and $\phi^+_{m+1}$ are the boundary values
transmitted to the next iteration of the Schwarz procedure.
\end{itemize}

\medskip

$\Pcal_m$ is the boundary-to-boundary map that maps the boundary
condition on $\Gamma_{m,-}$ to the boundary values on the adjacent
subdomains ($\Gamma_{m,+}^s=\Ecal_{m,m+1}\cup\Ecal_{m,m-1}$):
\begin{equation}
\begin{aligned}
\Pcal_m : \quad &  L^2(\Gamma_{m,-}; |n\cdot v| ) &\rightarrow \quad & L^2(\Gamma^\text{s}_{m,+}; |n\cdot v| ) \\
& \phi &\mapsto \quad & u|_{\Gamma_{m,+}^s}
\end{aligned}\,.
\end{equation}
This map $\Pcal_m$ is a well-defined operator, as we show in
Theorem~\ref{thm:wellposed}. It can be regarded as a composition of
$\Scal_m$ and a trace operator, as follows:
\begin{equation}\label{eqn:update}
	\Pcal_m:\;\phi_m \xrightarrow{\Scal_m}u_m\rightarrow u_m|_{\Gamma^\text{s}_{m,+}}\,.
\end{equation}
Note that $u_m|_{\Gamma^\text{s}_{m,+}}$ provides the boundary
condition for the adjacent subdomains $\phi_{m\pm1}$ in the next
Schwarz iteration, seen as in equation~\eqref{eqn:MidSchwarz1}. The full
map $\Pcal$ is obtained by collecting the action of $\Pcal_m$ for all
subdomains $m=1,2,\dotsc,M$.

%
%
%
%
%
%

As initial conditions for the Schwarz process, we set
\begin{equation}\label{eqn:InitSchwarz0}
\phi_{m,-}^0 = 0\,,\quad m=1,2,\dotsc,M,
\end{equation}
except at the physical boundary, where we impose given boundary
conditions: 
\begin{equation}\label{eqn:InitSchwarz1}
\begin{aligned}
\phi_{1,-}^0 = \phi^{\text{bdry}} \quad \text{on} \quad \Gamma_-\cap
\Gamma_{1,-}\,,\quad\text{and}\quad \phi_{M,-}^0 = \phi^{\text{bdry}}
\quad \text{on} \quad \Gamma_-\cap \Gamma_{M,-}\,.
\end{aligned}
\end{equation}
When convergence to a given tolerance is achieved, the latest
solutions may not perfectly match at the overlapping areas. To
assemble the global solution, we define a suitable set of
partition-of-unity functions $\{\eta_m(x)\}$ for the subdomains
$\Kcal_m$, whose properties are as follows:
\begin{align*}
  0<\eta_m(x)\leq 1, \;\;  \eta_m(x)&=0  \;\; \mbox{for $x \notin \Kcal_m$ and all $m=1,2,\dotsc,M$}; \\
\sum_{m=1}^M\eta_m(x) & \equiv 1\,,\quad \mbox{for all  $x\in\Kcal$.}
\end{align*}
We construct the global solution by setting
\begin{equation}\label{eqn:assemble}
	u^\text{final}(x,v) = \sum_{m=1}^M u_m(x,v) \eta_m(x) \,.
\end{equation}
The method is summarized in Algorithm~\ref{alg:Schwarz0}.

\begin{algorithm}
	\caption{Schwarz Method for RTE}\label{alg:Schwarz0}
	\begin{algorithmic}[1]
		\State \textbf{Input:} global boundary conditions $\phi^{\text{bdry}}$ and error tolerance $\tau$;
		\State Set $t \leftarrow 0$; Initialize $\phi_m^0$ from boundary conditions  \eqref{eqn:InitSchwarz0} and  \eqref{eqn:InitSchwarz1};
		\State \textbf{Repeat }
		\Indent
		\State $t \leftarrow t+1$;
		\State	\textbf{For $m = 1,\ldots,M$}
		\Indent
		\State $u^t_m \leftarrow \Scal_m(\phi_m^{t-1})$ via \eqref{eqn:RTE_local};
		\State $\phi_{m\pm1}^{t} \leftarrow u_m^t|_{\Ecal_{m,m\pm 1}}$ via trace restrictions~\eqref{eqn:MidSchwarz1};
		\EndIndent
		\State \textbf{EndFor}
		\State $\text{error} \leftarrow \sum_{m}\|\phi_m^t-\phi_m^{t-1}\|$;
		\EndIndent
		\State \textbf{Until $\text{error} \le \tau$};
		\State Assemble the final solution using~\eqref{eqn:assemble};
		\State \textbf{Return:} final solution $u^{\text{final}}$.
	\end{algorithmic}
\end{algorithm}

The Schwarz approach has several advantages. First, it is easy to
implement in parallel, since the main computations
\eqref{eqn:RTE_local} and \eqref{eqn:MidSchwarz1} can be solved
simultaneously for the subdomains $m=1,2,\dotsc,M$. In fact, one could
even use different solvers in different subdomains, when appropriate
(for example, when there is prior information about inhomogeneity of
the medium). Second, computing solutions in each subdomain is
significantly cheaper than for the full domain. It saves storage cost
and computation time, especially when stoge and computation scale
superlinearly with the size of the domain.

The disadvantage of the Schwarz approach is that it requires multiple
iterations for convergence. Since $\mathcal{P}_m$ needs to be
reevaluated at each iteration for each subdomain $\Dcal_m$, and it
calls for the computation of $\Scal_m$, finding the local solutions
with the given boundary condition quickly is the key to the success of
the entire algorithm. In the following sections, we identify the
operator that can be efficiently compressed, aiming at improving the
efficiency of evaluating $\Pcal_m$, or $\Scal_m$.


\subsection{Identifying the Operator to be Compressed}\label{sec:introduce_Pcal}

As discussed in Section~\ref{sec:low_rank}, one should be able to reveal and exploit the low-rankness in homogenizable equations.
In our setting, the local
  equation~\eqref{eqn:RTE_local} has a homogenization limit when
  $\vep$ and $\delta$ are small, so we expect the map from boundary
  conditions to local interior solutions (upon eliminating a boundary
  layer) to be of low rank. Indeed, we see this phenomenon in
  Figure~\ref{fig:sv_layer}, where we plot all normalized singular
  values of the discrete representation of $\Scal_4$ and $\Pcal_4$. A
  solution with an inhomogeneous boundary condition can have strong
  boundary layer effect. These boundary layer effects are included in
  $\Scal_m$, an operator that maps the boundary condition to the
  solution in the entire region (including the boundary layer),
  destroying the desired low-rank structure. However, the operator
  $\Pcal_m$ looks only at the solution confined to a small interior
  set $\Gamma^\text{s}_{m}$, and has a much faster decay in its
  singular values. This observation resonates with the argument in
  Remark~\ref{rmk:boudary}: the homogenization limit concerns mainly
  the behavior of the solution in the interior of the (sub)domain,
  while the behavior of the solution in boundary layers is usually
  still far from ``equilibrium''.
%
\begin{figure}[t]
\centering
\includegraphics[width = 0.40\textwidth]{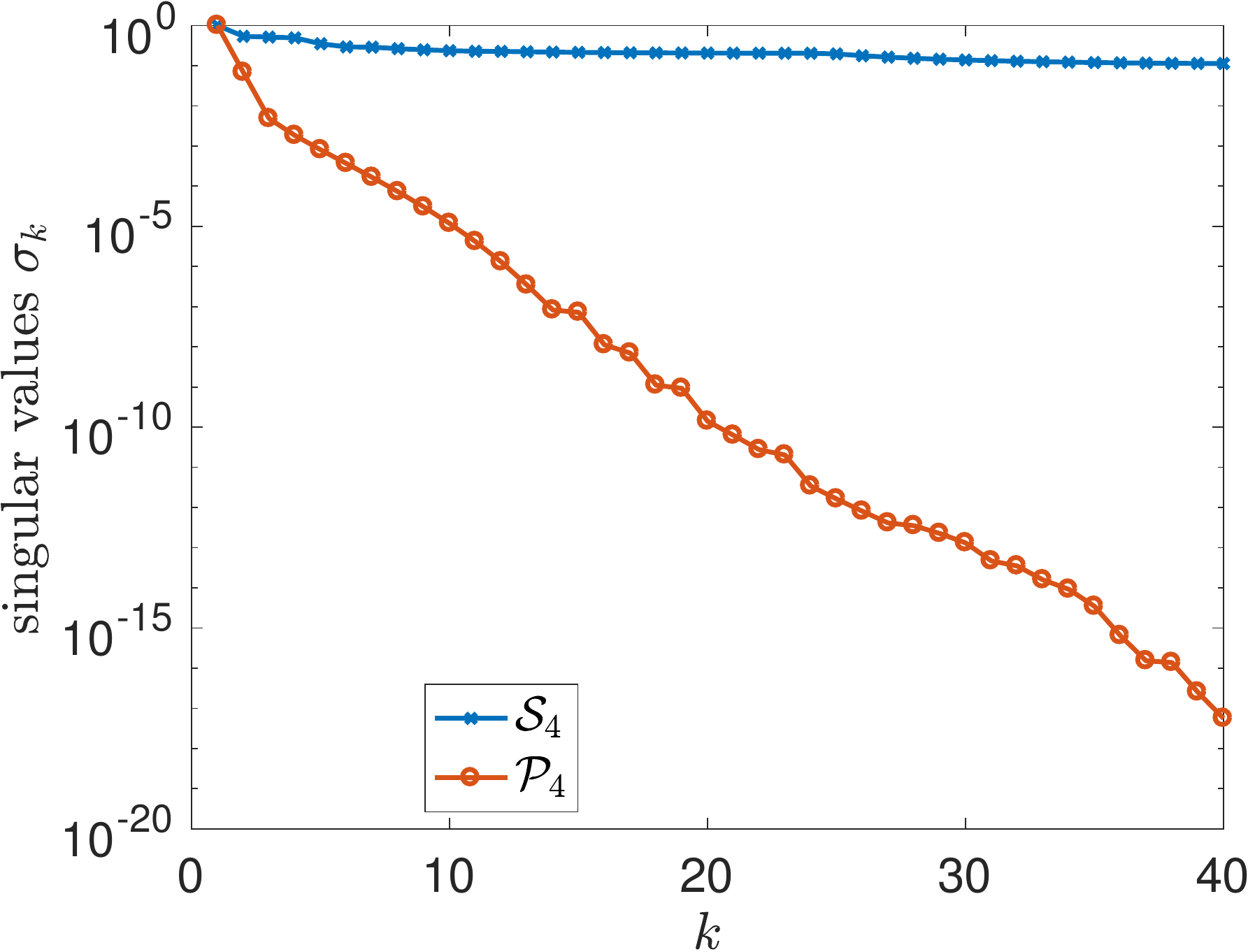}
\includegraphics[width = 0.40\textwidth]{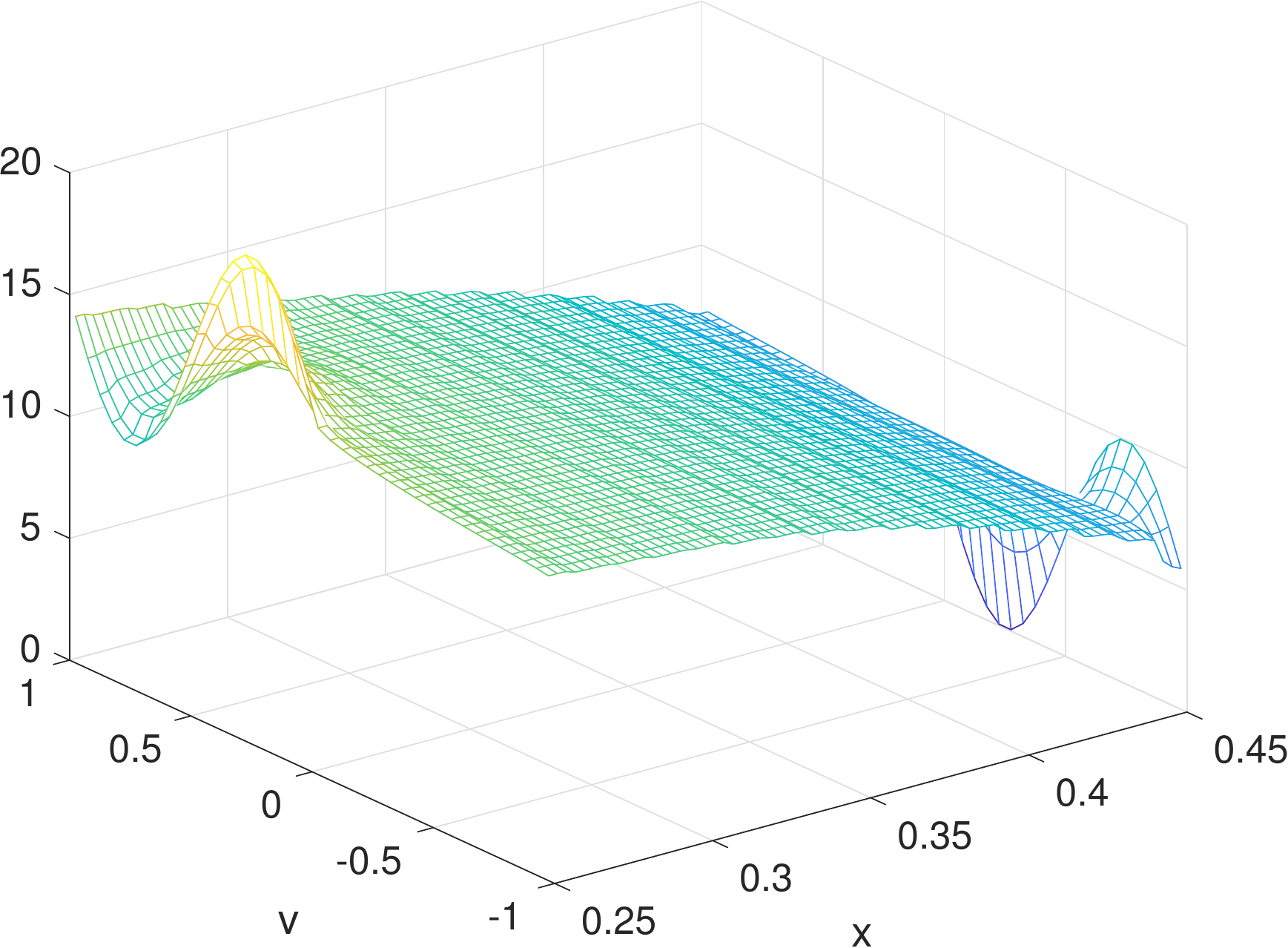}
\caption{Left: Singular values of $\Scal_4$ and $\Pcal_4$. Right: A
  solution with inhomogeneous boundary condition, exhibiting the
  boundary layer effect. $(\vep,\delta) = (1/81,1/81)$ in both
  cases. The domain is chosen to be
    $(x,v)\in[3/20,7/20]\times[-1,1]$ with discretization parameters
    $\Delta x = 0.002$ and $\Delta v = 0.05$. In $x$ direction we
    apply upwind and in $v$ direction we apply the classical $S_n$ (discrete ordinates)
    method. The resulting algebra problem is computed using
    GMRES~\cite{li2017implicit}.}
\label{fig:sv_layer}
\end{figure}

Thus $\Pcal_m$ is a more suitable object for compression, so we seek a
fast solver to approximate $\Pcal_m(\phi_m)$ for any input $\phi_m$,
by making use of Randomized SVD
(Algorithm~\ref{alg:operator_RSVD}). This method requires us to apply
the operator $\Pcal_m$ to random inputs, which amounts to finding the
local solution in $\Dcal_m$
with randomly constructed boundary conditions,
and then confining it to $\Gamma^\text{s}_{m,+}$. Following
\eqref{eqn:update}, the evaluation $f=\Pcal_m (r)$ is defined as
\[
f = u|_{\Gamma^\text{s}_{m,+}}\quad\text{where $u$ solves}\quad\begin{cases}
v\cdot \nabla_x u(x,v)  = \frac{1}{\vep} \sigma^\delta(x)\Lcal u(x,v) &\quad \text{in} \quad \Dcal_m \\
u(x,v) = r(x,v) &\quad \text{on} \quad \Gamma_{m,-}
\end{cases}\,.
\]
Finding $\Pcal^\ast_m$, the adjoint of $\Pcal_m$, is much more
complicated, as we show in the following theorem, whose proof appears
in the Appendix.

\begin{theorem}\label{thm:adjoint}
The adjoint operator $\Pcal^\ast_m$ is defined by
\begin{equation}
\begin{aligned}\label{eqn:adjoint_boundary_update}
\Pcal^\ast_m : \quad &  L^2(\Gamma^s_{m,+}; |n\cdot v| ) &\rightarrow \quad & L^2(\Gamma_{m,-}; |n\cdot v| ) \\
	& \psi &\mapsto \quad & h|_{\Gamma_{m,-}}
\end{aligned},
\end{equation}
where $h$, supported on $\Dcal_m\backslash \Dcal_m^s$, satisfies:
\begin{equation}\label{eqn:adjoint_RTE1}
\begin{cases}
	(-v\cdot \nabla_x -\frac{\sigma^\delta}{\vep}\Lcal ) h = 0 \quad &\text{in}\quad \Dcal_m\backslash \Dcal_m^s \\
	h = g  \quad &\text{on} \quad \Gamma_{m,-}^s\\
	h = 0  \quad &\text{on} \quad \Gamma_{m,+}\\
\end{cases}\,,
\end{equation}
in which $g$ is the solution to:
\begin{equation}\label{eqn:adjoint_RTE2}
\begin{cases}
	(-v\cdot \nabla_x -\frac{\sigma}{\vep}\Lcal ) g = 0 \quad & \text{in}\quad \Dcal_m^s \\
	g = \psi + h|_{\Gamma^s_{m,+}} \quad & \text{on} \quad \Gamma_{m,+}^s
\end{cases}\,.
\end{equation}
The operators $\Pcal^\ast_m$ and $\Pcal_m$ are adjoint in the sense
that:
\begin{equation}
\langle \Pcal_m \phi, \psi\rangle_{\Gamma_{m,-}^s} = \langle \phi, \Pcal^\ast_m \psi \rangle_{\Gamma^s_{m,+}}\,,
\end{equation}
where $\langle \cdot,\cdot \rangle_{\Gamma_{m,+}^s}$ and $\langle
\cdot,\cdot \rangle_{\Gamma_{m,-}}$ are weighted-$L^2$ inner products
on $L^2(\Gamma^\text{s}_{m,+}; |n\cdot v| )$ and $ L^2(\Gamma_{m,-};
|n\cdot v| ) $, respectively, defined by
\[
	\langle f\,,g \rangle_{\Gamma_{m,+}^s} =
        \int_{\Gamma^\text{s}_{m,+}} fg|n\cdot v|\, \rd{x} \, \rd{v}\,,\quad
        \text{and}\quad \langle f\,,g \rangle_{\Gamma_{m,-}} =
        \int_{\Gamma_{m,-}} fg|n\cdot v| \, \rd{x} \, \rd{v}\,.
\]
\end{theorem}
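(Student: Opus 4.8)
The statement is a verification, and the natural engine is the Green's (integration-by-parts) identity for the advection operator $v\cdot\nabla_x$ together with the self-adjointness of the collision operator. I would first record two facts. (i) For any sub-cylinder $\Omega\times\Vcal$ with $\Omega\subset\Kcal_m$ and any $u,w$ in the trace space $H_A(\Omega\times\Vcal)$,
\[
\int_{\Omega\times\Vcal}\!\big[(v\cdot\nabla_x u)\,w+u\,(v\cdot\nabla_x w)\big]\,\rd x\,\rd v=\int_{\partial\Omega\times\Vcal}\!u\,w\,(v\cdot n_x)\,\rd x\,\rd v,
\]
where $n_x$ is the outward normal of $\Omega$; the boundary integral is meaningful precisely because of the trace theory set up in the Appendix. (ii) $\Lcal$ is self-adjoint with respect to the velocity measure, so $\int_\Vcal(\Lcal u)\,w=\int_\Vcal u\,(\Lcal w)$ pointwise in $x$, and since $\sigma^\delta$ depends only on $x$ the fiberwise operator $\tfrac{\sigma^\delta(x)}{\vep}\Lcal$ is self-adjoint as well.

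Before using these, I would dispatch well-posedness of the coupled adjoint system \eqref{eqn:adjoint_RTE1}--\eqref{eqn:adjoint_RTE2}: although $g$ and $h$ are coupled (the inflow data of $g$ on $\Gamma^s_{m,+}$ involves $h$, and the inflow data of $h$ on $\Gamma^s_{m,-}$ involves $g$), the substitution $v\mapsto-v$ turns the adjoint transport operator $-v\cdot\nabla_x-\tfrac{\sigma^\delta}{\vep}\Lcal$ into the forward one and interchanges $\Gamma_{m,\pm}$ with $\Gamma^s_{m,\pm}$. The system therefore becomes an ordinary forward transport problem on $\Dcal_m$ carrying a prescribed jump of size $\psi$ across the internal interface $\Gamma^s_{m,+}$, and unique solvability (hence boundedness of $\Pcal^\ast_m$ between the stated weighted-$L^2$ spaces) follows from Theorem~\ref{thm:wellposed}.

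The heart of the proof is a two-domain boundary computation. Let $u=\Scal_m(\phi)$ solve the forward RTE on $\Dcal_m$ with $u|_{\Gamma_{m,-}}=\phi$, so that $\Pcal_m\phi=u|_{\Gamma^s_{m,+}}$. I would apply identity (i) twice: once on the core cylinder $\Dcal^s_m$ with the pair $(u,g)$, and once on the (two-component) wing cylinder $\Dcal_m\setminus\Dcal^s_m$ with the pair $(u,h)$. In both cases the volume term vanishes by (ii) together with the equations satisfied by $u$ and by $g$ (resp.\ $h$). On the core this gives $\int_{\Gamma^s_{m,+}}ug\,|v\cdot n_x|=\int_{\Gamma^s_{m,-}}ug\,|v\cdot n_x|$; on the wings, observing that the outward normal of $\Dcal_m\setminus\Dcal^s_m$ along $\Gamma^s_{m,\pm}$ is the negative of that of $\Dcal^s_m$, it gives $\int_{\Gamma_{m,+}}uh\,|v\cdot n_x|-\int_{\Gamma_{m,-}}uh\,|v\cdot n_x|=\int_{\Gamma^s_{m,+}}uh\,|v\cdot n_x|-\int_{\Gamma^s_{m,-}}uh\,|v\cdot n_x|$. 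Now insert the boundary conditions: $g-h=\psi$ on $\Gamma^s_{m,+}$ produces the term $\int_{\Gamma^s_{m,+}}u\,\psi\,|v\cdot n_x|=\langle\Pcal_m\phi,\psi\rangle_{\Gamma^s_{m,+}}$; $g=h$ on $\Gamma^s_{m,-}$ cancels the remaining interface contributions; $h=0$ on $\Gamma_{m,+}$ kills the outflow term; and $u=\phi$ on $\Gamma_{m,-}$ turns the last term into $\int_{\Gamma_{m,-}}\phi\,h\,|v\cdot n_x|=\langle\phi,\Pcal^\ast_m\psi\rangle_{\Gamma_{m,-}}$. Chaining the two displayed identities collapses everything to $\langle\Pcal_m\phi,\psi\rangle_{\Gamma^s_{m,+}}=\langle\phi,\Pcal^\ast_m\psi\rangle_{\Gamma_{m,-}}$, which is the claim.

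I expect the main obstacle to be the geometric and orientation bookkeeping rather than any hard analysis. Because $\Pcal_m$ reads the solution off the internal slice $\Gamma^s_{m,+}$, the dual source lives on that slice, which is why the adjoint solution must jump there and why one is forced to split $\Dcal_m$ along it; one then has to correctly sort each piece of $\partial\Dcal^s_m$ and $\partial(\Dcal_m\setminus\Dcal^s_m)$ into inflow and outflow parts, track that $\Gamma^s_{m,+}$ reappears with the opposite normal in the two applications of Green's identity, and check that the data prescribed for $g$ and $h$ is exactly what annihilates every boundary term except the two that survive. The only genuinely analytic point, justifying identity (i) for functions of the limited regularity provided by $H_A$, is already covered by the trace machinery in the Appendix; everything else is elementary once well-posedness of the forward and adjoint problems is in hand.
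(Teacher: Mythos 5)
Your proof is correct and follows essentially the same route as the paper's: split $\Dcal_m$ into $\Dcal_m^{\text{s}}$ and its complement, pair the forward solution against $g$ and $h$ via the transport Green's identity on each piece (the volume terms dying by self-adjointness of $\Lcal$), and let the boundary data $g-h=\psi$ on $\Gamma^s_{m,+}$, $g=h$ on $\Gamma^s_{m,-}$, $h=0$ on $\Gamma_{m,+}$, and $u=\phi$ on $\Gamma_{m,-}$ cancel everything except the two surviving terms. The only material you add is the well-posedness of the coupled adjoint system via the reflection $v\mapsto-v$, which the paper leaves implicit.
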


The computation involved in finding the adjoint operator is
complicated. It requires the computation of two adjoint RTEs over
$\Dcal_m^\text{s}$ and $\Dcal_m\backslash\Dcal_m^\text{s}$,
respectively, that are coupled in a nontrivial fashion through the
boundary conditions, as seen in~\eqref{eqn:adjoint_RTE1}
and~\eqref{eqn:adjoint_RTE2}. We further note that the measure is not
the standard Lebesgue measure, but rather is weighted by $|n\cdot v|$.
%
%

\subsection{Design of the Adjoint Map for $\Scal^\text{s}_m$}\label{sec:adjoint}

Although the operator $\Pcal_m$ is of approximate low-rank, its
adjoint $\Pcal_m^\ast$, which is needed to compute the low-rank
approximation, is complicated. The operator $\Scal_m$, on the other
hand, is not compressible, but its adjoint is relatively easy to
find. In this section, we show that we can approximate $\Pcal_m$ by an
approximately low-rank operator based on $\Scal_m$ whose adjoint is
easy to find.

Since $\Scal_m$ has slow singular decay mainly because it contains too
much information from the boundary layer, we consider a restriction of
this operator from $\Dcal_m$ to $\Dcal_m^{\text{s}}$, which we call
$\Scal^\text{s}_m$. The restriction to $\Dcal_m^{\text{s}}$ eliminates
most of the effects of the boundary layer.
This operator is defined as follows.
\begin{equation}
\begin{aligned}
\Scal^\text{s}_m : \quad &  L^2(\Gamma_{m,-}; |n\cdot v| ) &\rightarrow \quad & L^2(\Dcal^\text{s}_{m}) \\
& \phi &\mapsto \quad & u^\text{s}
\end{aligned}\,,
\end{equation}
where $u_m^\text{s}=u_m|_{\Dcal^\text{s}_{m}}$ and $u_m =
\Scal_m\phi$. The advantages of using this operator are threefold.
\begin{itemize}
\item[1.] $\Pcal_m$ can be defined easily in terms of
  $\Scal^\text{s}_m$.  Nothing is lost by comparison with
  \eqref{eqn:update}; we have
\begin{equation}\label{eqn:update_new}
\mathcal{P}_m:\;\phi_m\xrightarrow{\Scal^\text{s}_m}
u_m^\text{s}\rightarrow u^\text{s}_{m}|_{\Gamma^\text{s}_{m,+}}\,,
\end{equation}
and $u^\text{s}_{m}|_{\Gamma^\text{s}_{m,+}}$ once again serves as the
new boundary condition $\phi_{m\pm1}$, as in
equation~\eqref{eqn:MidSchwarz1}. Note that the trace in
\eqref{eqn:update_new} is well defined, as $\Scal_m$ maps boundary
conditions to $H_A(\Dcal_m)$, so the image of its restriction to
$\Dcal_m^{\text{s}}$ has a trace on the boundary
$\Gamma_{m,+}^{\text{s}}$ of $\Dcal_m^{\text{s}}$.
\item[2.] Because effects from boundary layers are excluded in
  $\Scal_m^\text{s}$, it can be expected to have approximate low rank.
  Figure~\ref{fig:svd_patch3} shows that the decay rate of
  $\Scal^\text{s}_m$ (upon discretization) is almost the same as for
  $\Pcal_m$.
\begin{figure}[t]
\centering
\includegraphics[width = 0.6\textwidth,height = 0.25\textheight]{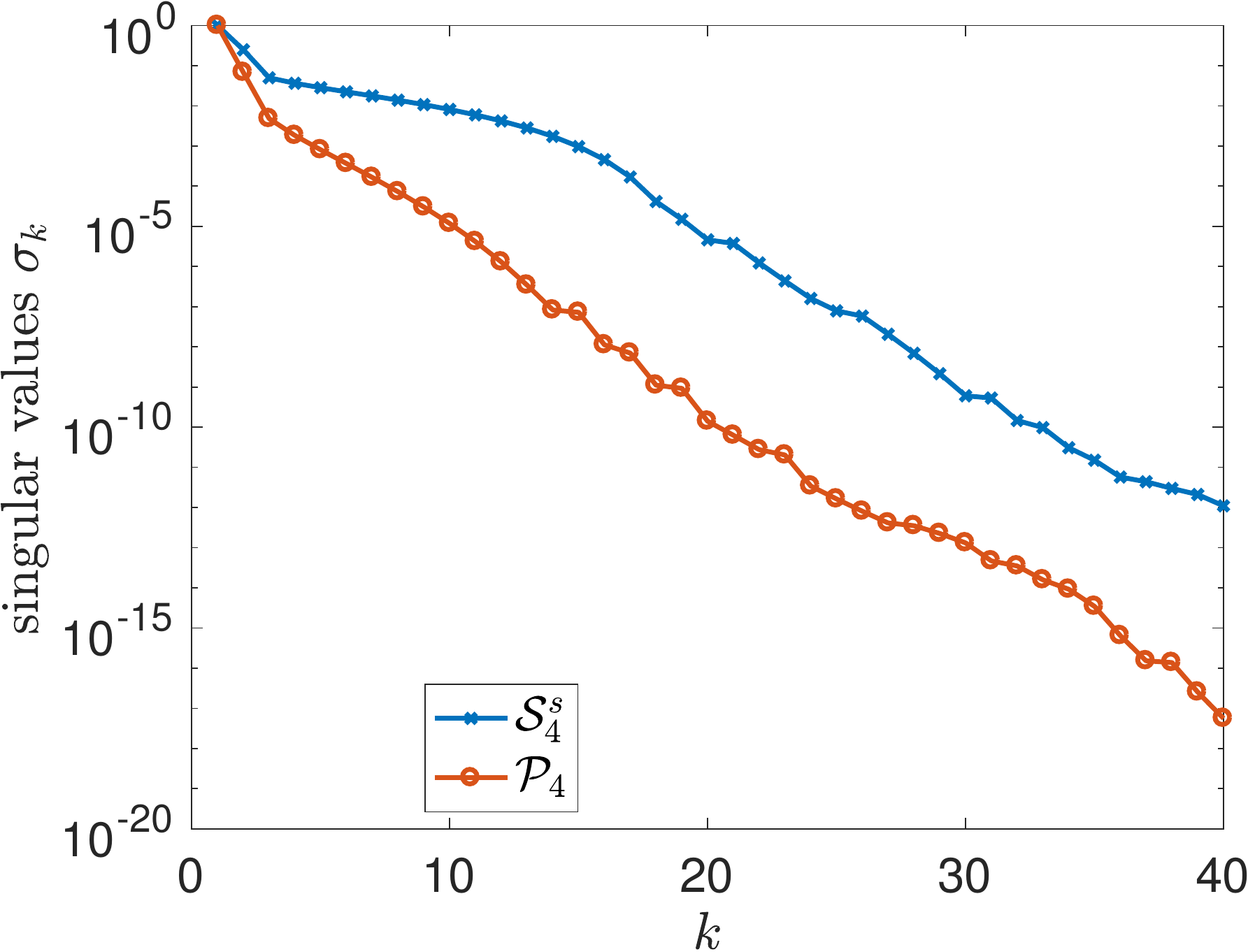}
\caption{Singular values of $\Scal_4^s$ and $\Pcal_4$ when $(\vep,\delta) = (1/81,1/81)$.}
\label{fig:svd_patch3}
\end{figure}
\item[3.] The adjoint is easy to compute, as we show next, in
  Theorem~\ref{thm:adjoint_S}.
\end{itemize}

\medskip

\begin{theorem}\label{thm:adjoint_S}
  The adjoint of $\Scal_m^\text{s}$ is defined as follows:
\begin{equation}\label{eqn:map_ad}
\begin{aligned}
\left(\Scal_m^{\text{s}}\right)^\ast: \ &L^2(\Dcal^\text{s}_{m}) \ &\rightarrow \ &L^2(\Gamma_{m,-};|n\cdot v|)\\
&g \ &\mapsto \ & h|_{\Gamma_{m,-}}
\end{aligned},
\end{equation}
where $h$ solves the adjoint RTE over $\Dcal_m$, which is 
\begin{equation}\label{eqn:RTE_ad}
\begin{cases}
(-v\cdot \nabla_x-\frac{1}{\vep}\sigma^\delta(x)\Lcal) h = \tilde{g} \quad & \text{in} \quad \Dcal_m \\
h = 0\ \quad & \text{on}\quad \Gamma_{m,+}
\end{cases}\,,
\end{equation}
and the source $\tilde{g}$ is the trivial extension of $g$ over
$\Dcal_m$, that is,
\[
\tilde{g} = g\, \;\; \mbox{for $(x,v)\in\Dcal^\text{s}_m$} \quad
\text{and}\quad\tilde{g} = 0  \;\; \mbox{for  $(x,v)\in\Dcal_m\backslash
\Dcal_m^\text{s}$}.
\]
\end{theorem}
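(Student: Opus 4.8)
The plan is to verify the adjoint relation directly from the definition: we must show that for every $\phi\in L^2(\Gamma_{m,-};|n\cdot v|)$ and every $g\in L^2(\Dcal_m^\text{s})$,
\[
\langle \Scal_m^\text{s}\phi,\, g\rangle_{\Dcal_m^\text{s}} = \langle \phi,\, (\Scal_m^\text{s})^\ast g\rangle_{\Gamma_{m,-}}\,,
\]
where the left inner product is the standard $L^2$ pairing on $\Dcal_m^\text{s}$ and the right one carries the weight $|n\cdot v|$. First I would introduce $u = \Scal_m\phi$, the solution of the forward RTE \eqref{eqn:RTE_local} on $\Dcal_m$ with inflow data $\phi$ on $\Gamma_{m,-}$, so that $\Scal_m^\text{s}\phi = u|_{\Dcal_m^\text{s}}$; and $h$, the solution of the adjoint problem \eqref{eqn:RTE_ad} on $\Dcal_m$ with source $\tilde g$ (the zero-extension of $g$) and vanishing outflow data $h=0$ on $\Gamma_{m,+}$. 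Then $\langle \Scal_m^\text{s}\phi, g\rangle_{\Dcal_m^\text{s}} = \int_{\Dcal_m} u\,\tilde g\,\rd x\,\rd v = \int_{\Dcal_m} u\,\bigl(-v\cdot\nabla_x h - \tfrac1\vep \sigma^\delta \Lcal h\bigr)\,\rd x\,\rd v$.

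The core computation is an integration by parts in $x$ combined with the self-adjointness of $\Lcal$. For the transport term, $\int_{\Dcal_m} -u\,(v\cdot\nabla_x h) = \int_{\Dcal_m} h\,(v\cdot\nabla_x u) - \int_{\partial\Kcal_m\times\Vcal} u\,h\,(n_x\cdot v)\,\rd x\,\rd v$; the boundary term splits over $\Gamma_{m,+}$ and $\Gamma_{m,-}$ according to the sign of $n_x\cdot v$. For the scattering term, since $\Lcal$ is symmetric with respect to the velocity measure $\mu$ (it is $\langle\cdot\rangle_\mu - \mathrm{id}$, a self-adjoint operator on $L^2(\Vcal,\rd\mu)$), we get $\int_{\Dcal_m} -\tfrac1\vep u\,\sigma^\delta\Lcal h = \int_{\Dcal_m} -\tfrac1\vep h\,\sigma^\delta\Lcal u$. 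Adding these, the interior terms combine to $\int_{\Dcal_m} h\,\bigl(v\cdot\nabla_x u - \tfrac1\vep\sigma^\delta\Lcal u\bigr) = 0$ because $u$ solves the forward equation \eqref{eqn:RTE_local}. What survives is the boundary integral: $-\int_{\Gamma_{m,+}} u\,h\,(n_x\cdot v) - \int_{\Gamma_{m,-}} u\,h\,(n_x\cdot v)$. On $\Gamma_{m,+}$ the adjoint data $h=0$ kills that piece; on $\Gamma_{m,-}$ we have $u=\phi$ and $n_x\cdot v < 0$, so $-\int_{\Gamma_{m,-}} \phi\,h\,(n_x\cdot v) = \int_{\Gamma_{m,-}} \phi\,h\,|n_x\cdot v|\,\rd x\,\rd v = \langle\phi, h|_{\Gamma_{m,-}}\rangle_{\Gamma_{m,-}}$, which is exactly $\langle\phi,(\Scal_m^\text{s})^\ast g\rangle_{\Gamma_{m,-}}$ by the definition \eqref{eqn:map_ad}. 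This establishes the claim formally.

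The main obstacle is not the formal calculation but its justification: making the integration by parts rigorous requires that the traces of $u$ and $h$ on $\Gamma_{m,\pm}$ be well-defined and that the boundary flux $\int u\,h\,|n_x\cdot v|$ be finite. This is where the space $H_A(\Dcal_m)$ enters — one needs the trace theorem for the advection operator (Cessenat-type trace results, as referenced in the Appendix's well-posedness discussion) guaranteeing that solutions of RTE with $L^2(|n\cdot v|)$ inflow data lie in $H_A(\Dcal_m)$ and admit $L^2(|n\cdot v|)$ outflow traces; a density argument (approximating $\phi$ and $g$ by smooth compactly supported data, for which the manipulations are classical, then passing to the limit using continuity of the solution maps and of the trace operators) closes the gap. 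I would also note explicitly that $h$ is supported — up to its influence through boundary coupling — in a way consistent with $\tilde g$ being supported in $\Dcal_m^\text{s}$, though for the $\Scal_m^\text{s}$ adjoint (unlike the $\Pcal_m^\ast$ case in Theorem~\ref{thm:adjoint}) no interior subdomain splitting is needed: the single adjoint solve \eqref{eqn:RTE_ad} on all of $\Dcal_m$ suffices, which is precisely the simplification advertised in advantage~3.
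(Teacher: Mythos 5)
Your proposal is correct and follows essentially the same route as the paper's proof: introduce $u=\Scal_m\phi$ and the adjoint solution $h$, rewrite $\langle \Scal_m^\text{s}\phi, g\rangle_{\Dcal_m^\text{s}}$ as $\int_{\Dcal_m} u\,\tilde g$, substitute the adjoint equation, integrate by parts using the self-adjointness of $\Lcal$, and observe that the interior term vanishes since $u$ solves the forward RTE while the outflow boundary term vanishes since $h=0$ on $\Gamma_{m,+}$, leaving exactly $\langle \phi, h|_{\Gamma_{m,-}}\rangle_{\Gamma_{m,-}}$. Your added remarks on trace regularity and density are a sensible supplement but do not change the argument, which the paper carries out at the same formal level.
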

\begin{proof}
We need to show that
\[
	\langle g, \Scal^\text{s}_{m} \phi\rangle_{\Dcal_m^\text{s}} = \langle\left(\Scal_m^{\text{s}}\right)^\ast g \,,\phi\rangle_{\Gamma_{m,-}}
\]
for all $g$ and $\phi$. Denoting by $u$ the solution
to~\eqref{eqn:RTE_local} with boundary condition $\phi$, then the
definition of $\Scal^\text{s}_m$ implies that the left hand side of
this expression is
\[
	\langle g, \Scal^\text{s}_{m} \phi\rangle_{\Dcal_m^\text{s}}  = \int_{\Dcal^\text{s}_{m}}g u\ \, \rd x \, \rd v = \int_{\Dcal_m} \tilde{g} u \, \rd x \, \rd v\,.
\]
Denoting by $h$ the solution to~\eqref{eqn:RTE_ad} with source term
$g$, we have
\begin{equation*}
\begin{aligned}
	\int_{\Dcal_m} \tilde{g} u\ \, \rd x \, \rd v &= \int_{\Dcal_m} u(-v\cdot \nabla_x-\frac{1}{\vep}\sigma^\delta\Lcal) h \ \, \rd x \, \rd v \\
	& = -\int_{\Gamma_{m,-}} uh v\cdot n \, \rd{x}\, \rd{v}- \int_{\Gamma_{m,+}} uh v\cdot n \, \rd{x}\, \rd{v}  \\
    & \qquad \qquad + \int_{\Dcal_m} h(v\cdot \nabla_x-\frac{1}{\vep}\sigma^\delta\Lcal)u \, \rd{x}\, \rd{v}\\
	& = \int_{\Gamma_{m,-}} uh |v\cdot n| \, \rd{x}\, \rd{v} \\
	& = \langle \left(\Scal_m^{\text{s}}\right)^\ast g,\phi \rangle_{\Gamma_{m,-}}\,,
\end{aligned}
\end{equation*}
yielding the desired result.
\end{proof}

By comparing Theorem~\ref{thm:adjoint} with
Theorem~\ref{thm:adjoint_S}, we see immediately that computing the
adjoint operator $\left(\Scal_m^{\text{s}}\right)^\ast$ is
significantly easier than computing $\Pcal^\ast_m$.

\subsection{Low-Rank Schwarz Iteration Method}\label{sec:reduced_schwarz}

We can use $\left(\Scal_m^{\text{s}}\right)^\ast$ to implement the
RSVD method to find the low-rank approximation to the operator
$\Scal_m^\text{s}$. Given target rank $r$, and denoting the reduced
operator by $\Scal_{m,r}^\text{s}$, we look for functions $\mu_i$ and
$\nu_i$, and nonnegative scalars $\sigma_i$, such that:
\begin{equation}\label{eqn:S_reduce}
\Scal_{m,r}^\text{s} =
\sum_i\sigma_i\mu_i(x_1,v_1)\nu_i(x_2,v_2)\,,\quad \mbox{for all
  $(x_1,v_1)\in\Dcal_m^\text{s}$ and $(x_2,v_2)\in\Gamma_{m,-}$,}
\end{equation}
where $\mu_i(x_1,v_1)$ and $\nu_i(x_2,v_2)$ are obtained in
Algorithm~\ref{alg:rte_rsvd}.
\begin{algorithm}[t]
		\caption{Approximation of $\Scal_{m}^\text{s}$ via RSVD}\label{alg:rte_rsvd}	
		\begin{algorithmic}[1]
			\State Given desired rank $r$ and oversampling parameter $p$, set $k:=r+p$;
			\State \textbf{Stage I}
			\Indent
			\State Generate $k$ independent Gaussian test vectors $\omega_1,\ldots,\omega_k$;
			\State Prepare incoming boundary conditions $\wt{w_i} = \Emat w_i\,, i=1,\ldots,k$, where $\Emat = [e_1,e_2,\dotsc]$ collects discrete orthonormal basis functions in $L^2(\Gamma_{m,-};|n\cdot v|)$;
			\State Evaluate $u_{m,i}|_{\Dcal_m^s} = \Scal_{m}^\text{s} \wt{w}_i\,, i=1,\ldots,k$ by solving \eqref{eqn:RTE_local} with boundary conditions $\wt{w}_i$, then taking restrictions over $\Dcal_{m}^s$;
			\State Construct matrix $\Qmat=[q_1,\ldots,q_k]$ whose columns form an orthogonal basis for span$\{u_{m,1}|_{\Dcal_m^s},\ldots,u_{m,k}|_{\Dcal_m^s}\}$;
			\EndIndent
			\State \textbf{Stage II}
			\Indent
			\State Prepare sources $g_i = [q_i \quad 0]$, $i=1,2,\dotsc,k$, so that $g_i = q_i$ over $\Dcal_{m}^s$ and $g_i=0$ over $\Dcal_m\backslash \overline{\Dcal_{m}^s}$;
			\State Evaluate $b_i = Yg_i$ by solving \eqref{eqn:RTE_ad} with $g_i$ as source, for $i=1,2,\dotsc,k$, then taking restrictions over $\Gamma_{m,-}$;
			\State Form matrix $\Bmat = [b_1,\ldots,b_k]$;
			\State Compute SVD of $\Bmat = \wt{\mathsf{M}}_k\Sigma_k\mathsf{N}_k^\ast$, where
			\begin{equation*}
			\wt{\mathsf{M}}_k = 
			\begin{bmatrix}
			\wt{\mu}_1,\ldots,\wt{\mu}_k
			\end{bmatrix}\,,\quad 
			\Sigma_k = \text{diag}\{\sigma_1,\ldots,\sigma_k\}\,,\quad \text{and} \quad 
			\mathsf{N}_k = 
			\begin{bmatrix}
			\nu_1,\ldots,\nu_k
			\end{bmatrix};
			\end{equation*}
			\State Compute $\mathsf{M}_k := \Qmat \wt{\mathsf{M}}_k$ and denote $\mathsf{M}_k =\begin{bmatrix}
			\mu_1,\ldots,\mu_k
			\end{bmatrix}$;
			\EndIndent
			\State \textbf{Return:} $\Scal_{m,r}^\text{s} = \sum_{i=1}^r \sigma_i\mu_i(x_1,v_1)\nu_i(x_2,v_2)$ for $(x_1,v_1)\in\Dcal_m^\text{s}$ and $(x_2,v_2)\in\Gamma_{m,-}$.
		\end{algorithmic}
\end{algorithm}

We note that in this algorithm, $k$ and $r$ could be hard to choose ahead of time. Numerically we can choose it ``on-the-fly''. This means we simply set an accuracy threshold and stop the process once the newly computed $u_{m,i}|_{\Dcal_m^s}$ falls almost in the previous generated space within the preset error tolerance. In terms of the low-rank operator $\Scal^\text{s}_{m,r}$, the procedure~\eqref{eqn:update_new} is reduced further to
\begin{equation}\label{eqn:update_reduce}
\mathcal{P}_{m,r}:\;\phi_m\xrightarrow{\Scal^\text{s}_{m,r}}
u^\text{s}_{m}\rightarrow u^\text{s}_{m}|_{\Gamma^\text{s}_{m,+}}\,,
\end{equation}
and we once again use $u^\text{s}_{m}|_{\Gamma^\text{s}_{m,+}}$ to
obtain the solutions $\phi_{m\pm1}$ to be used at the next time step,
as in equation~\eqref{eqn:MidSchwarz1}.


The procedure we have just outlined provides a much cheaper way to
evaluate $\Pcal$ in~\eqref{eqn:update} for the following reasons.
\begin{enumerate}
\item $\Scal_m^\text{s}$ maps the boundary condition to the interior
  of the subdomain, and it is cheaper to evaluate than $\Scal_m$,
  whose range has a bigger support.
\item The format in \eqref{eqn:S_reduce} guides the evaluation of
  $\Scal_{m,r}^\text{s}(\phi_m)$; we have
\begin{equation}\label{eqn:Scal_mr_evaluate}
\Scal_{m,r}^\text{s}(\phi_m) =
\sum_{i=1}^k\sigma_iu_i(x_1,v_1)\int_{\Gamma_{m,-}}\phi_m(x_2,v_2)v(x_2,v_2)|n_{x_2}\cdot
v_2| \, \, \rd{x_2}\, \rd{v_2}\,.
\end{equation}
This evaluation requrires $\mathcal{O}(k|\Gamma_{m,-}|)$ operations,
where $|\Gamma_{m,-}|$ is the cardinality (the number of grid points)
in $\Gamma_{m,-}$.
\end{enumerate}

Algorithm~\ref{alg:re_Schwarz} summarizes the complete approach using
the reduced solution map. The method is divided into offline and
online stages. The reduced operators $\Scal^\text{s}_{m,r}$ are found
in the offline stage, then called repeatedly in the online stage,
during the Schwarz iteration procedure.
\begin{algorithm}
	\caption{Low-Rank Schwarz Method}\label{alg:re_Schwarz}
	\begin{algorithmic}[1]
		\State \textbf{Offline Stage:}
		\Indent
			\State Call Algorithm \ref{alg:rte_rsvd} for all local reduced solution maps $\Scal_{m,r}^\text{s}$, with $m=1\,,\dotsc,M$;
		\EndIndent
		\State \textbf{Online Stage:}
		\Indent
			\State \textbf{Input:} global boundary conditions $\phi$ in \eqref{eqn:RTE_bd} and error tolerance $\tau$;
			\State Set $t \leftarrow 0$ and initiate all inflow boundary conditions from  \eqref{eqn:InitSchwarz0} and \eqref{eqn:InitSchwarz1};
			\State \textbf{While $\text{error}>\tau$} 
			\Indent
			\State $t=t+1$;
			\State	\textbf{For $m = 1,\ldots,M$}
			\Indent
			\State $u^\text{s}_m \leftarrow \Scal_{m,r}^\text{s}(\phi_m^{t-1})$ according to~\eqref{eqn:Scal_mr_evaluate};
			\State $\phi_{m\pm 1}^t = u^\text{s}_m|_{\Ecal_{m,m\pm 1}}$;
			\EndIndent
			\State \textbf{EndFor}
			\State $\text{error} = \sum_{m}\|\phi_m^t-\phi_m^{t-1}\|$;
			\EndIndent
			\State \textbf{EndFor}
			\State	\textbf{For $m = 1,\ldots,M$}
			\Indent
			\State $u_m \leftarrow \Scal_{m}(\phi_m^{t})$;
			\EndIndent
			\State\textbf{EndFor}
			\State Assemble the final solution using~\eqref{eqn:assemble};
			\State \textbf{Return:} final solution $u^\text{final}$.
		\EndIndent
	\end{algorithmic}
\end{algorithm}

%

\section{Numerical examples}\label{sec:Numerical}
In this section, we present numerical examples to validate the
accuracy and efficiency of our methods. We consider boundary value
problem \eqref{eqn:RTE_bd} with domain $\Dcal = \Kcal\times \Vcal =
(0,1)\times (-1,1)$ and highly oscillatory scattering coefficient
$\sigma^\delta(x)$ defined by
\begin{equation} \label{eq:sd4}
\sigma^\delta(x) = \frac{1.1 + \cos(4\pi x)}{1.1 + \sin(2\pi
  x/\delta)} \in [0.047,21],
\end{equation}
where $\delta$ represents the period of oscillation in the spatial
space. See Figure~\ref{fig:media_d81} for a graph of
$\sigma^\delta(x)$ with $\delta = 1/81$.
\begin{figure}[h]
	\centering
	\includegraphics[width=0.6\textwidth]{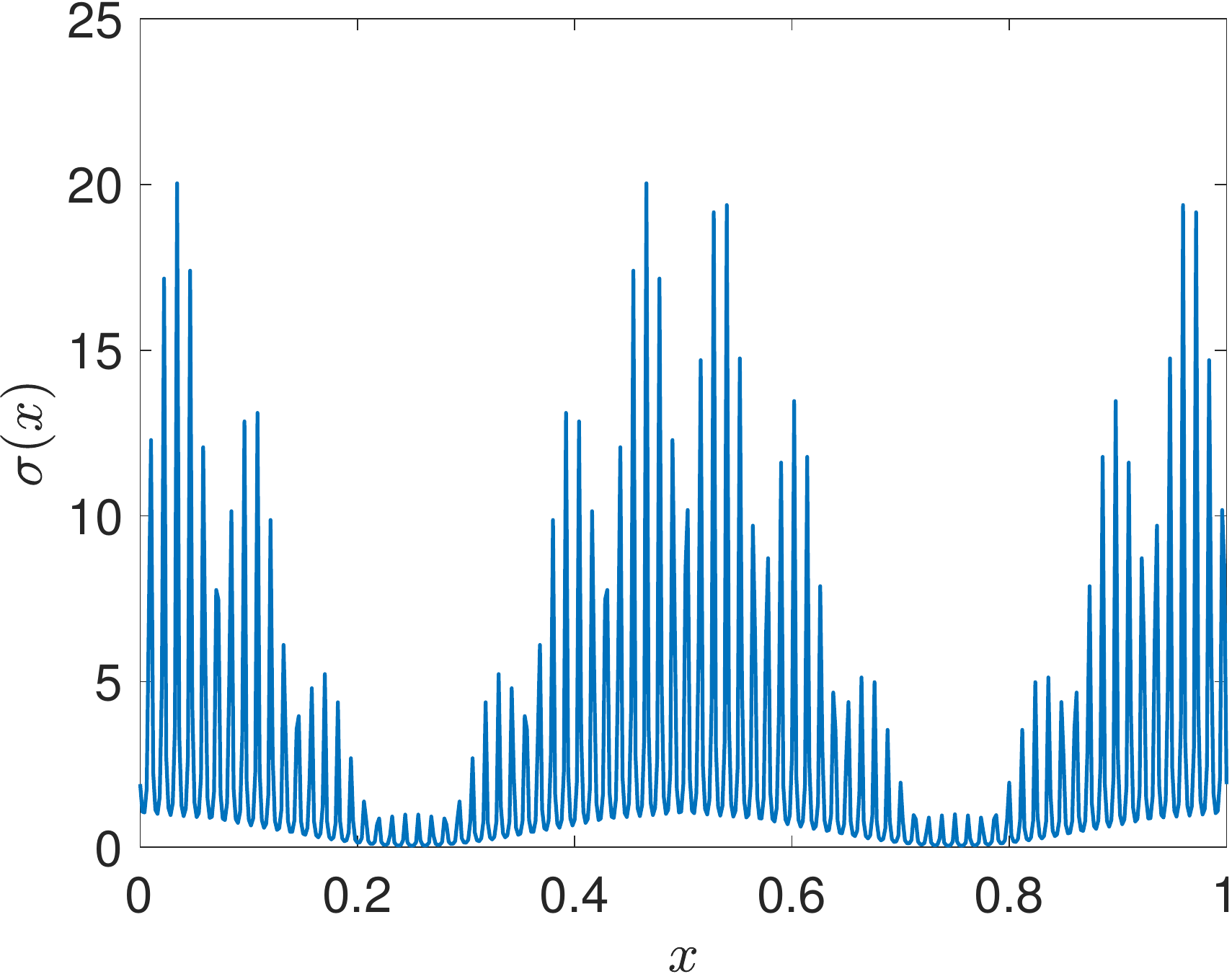}
	\caption{Graph of oscillatory media with $\delta = 1/81$.}
	\label{fig:media_d81}
\end{figure}

The space domain $\Kcal$ is divided into $M = 10$ different local
subdomains $\Kcal_m, m=1,\ldots,M$, as follows:
\[
\Kcal_1 = \left(0,\tfrac{3}{2M} \right)\,, \quad \Kcal_m =
\left(\tfrac{2m-3}{2M},\tfrac{2m+1}{2M} \right)\,, m=2,\ldots,M-1\,,
\quad \Kcal_{M} = \left(1-\tfrac{3}{2M},1 \right),
\]
so that each subdomain $\Kcal_m$ overlaps with its neighboring
subdomains $\Kcal_{m-1}$ and $\Kcal_{m+1}$ (except for the subdomains
$\Kcal_1$ and $\Kcal_M$ at the two ends of the domain, which overlap
with just one neighbor each). The subdomains $\Kcal_{m}^\text{s}$
defined in \eqref{eq:Dsm} are 
\[
\Kcal_{1}^\text{s} = \left(\tfrac{1}{2M},\tfrac{3}{2M} \right)\,, \quad
\Kcal_{m}^\text{s} = \left(\tfrac{m-1}{M},\tfrac{m}{M} \right)\,,
m=2,\ldots,M-1\,, \quad \Kcal_{M}^\text{s} =
\left(1-\tfrac{3}{2M},1-\tfrac{1}{2M} \right).
\]
We thus have $\Ecal_{m, m\pm 1} = \{\frac{m}{M}\}\times \Vcal\,, \quad
m=1,\ldots,M-1$, while $\Dcal_{m}^\text{s}: = \Kcal_{m}^\text{s}\times
\Vcal$ satisfies 
\[
\Ecal_{m, m\pm 1} \subset \Dcal_{m}^\text{s} \subset\Dcal_m\,, m = 1,\dotsc,M-1 \,.
\]
The spatial domain $\Kcal$ is discretized with a extreme fine mesh of
size $\Delta x = 1/360$, while the velocity domain $\Vcal$ is
discretized with a mesh of size $\Delta v = 2/40 = 0.05$. The fine-mesh discretization is determined by $\delta$ and $\vep$.

\subsection{Local Tests}

We first show that the singular values of the local solution map
$\Scal_{m}^\text{s}$ indeed decay rapidly for small Knudsen number
$\vep$ and small $\delta$. Figure~\ref{fig:local_map_svd} plots the
singular values of $\Scal_{4}^\text{s}$ (relative to the largest
singular value) for various values of $(\vep,\delta)$.  In the case of
large values $\vep=\delta = 1$, the singular values decay slowly and
low-rank structure is not present.  By contrast, in the small-value
regimes $(\vep,\delta)=(1/81,1/81)$ and $(\vep,\delta)=(1/81,1/9)$,
low-rank structure is evident. As a consequence, only half or even a
quarter of basis functions are needed to achieve high accuracy in
approximating the local solution map $\Scal_{m}^\text{s}$.


\begin{figure}
	\centering
	\includegraphics[width=0.6\textwidth]{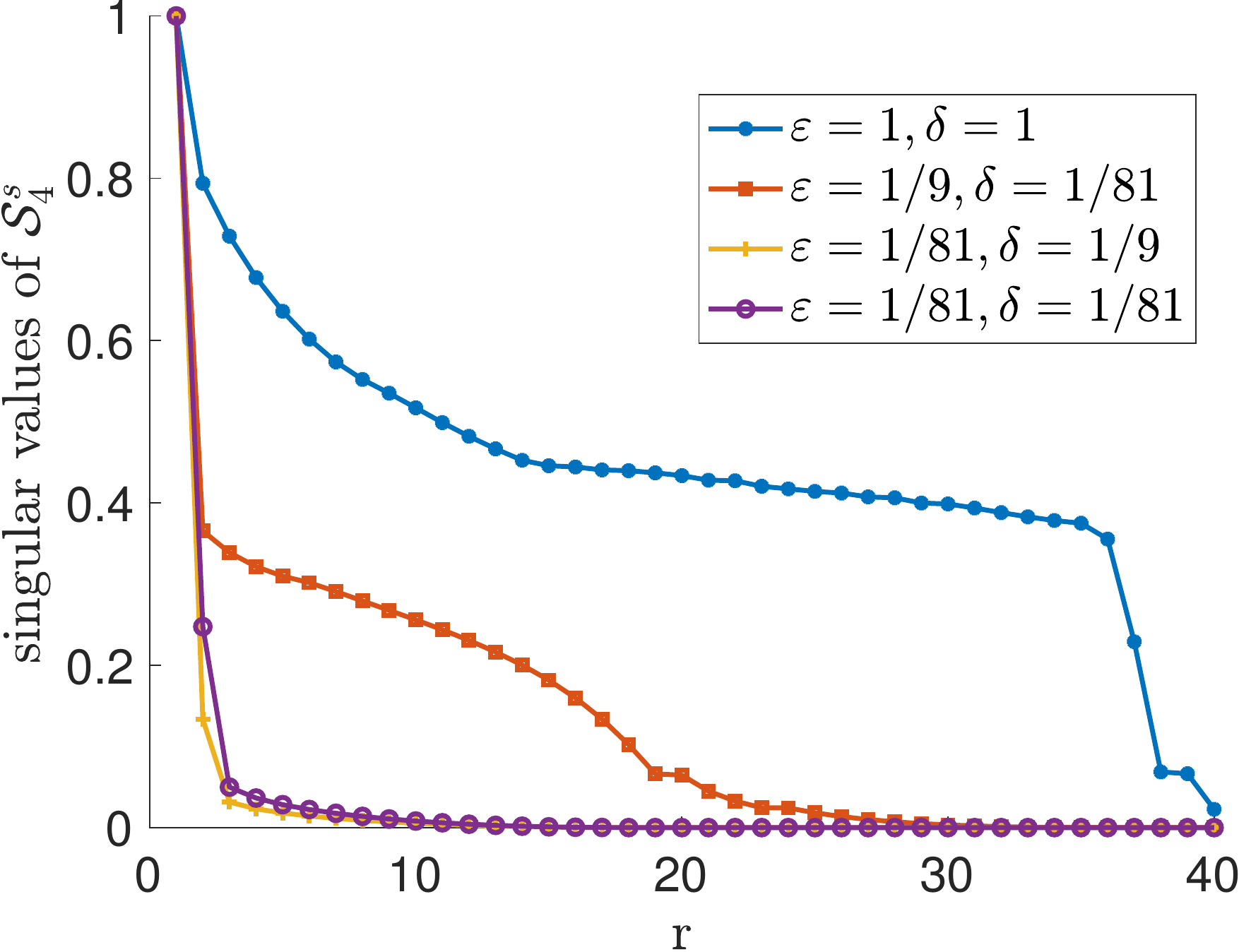}
	\caption{Singular value  of $\Scal_{4}^\text{s}$,
		relative to the largest singular value, plotted for various
		values of the parameter pair $(\vep,\delta)$. The singular
		values decay slowly for $(\vep,\delta) = (1,1)$ and relatively faster for $(\vep,\delta)= (1/9,1/81)$. A much
		faster decay is observed in other limiting regimes of
		$(\vep,\delta)$ approaching $(0,0)$.}
	\label{fig:local_map_svd}
\end{figure}

\subsection{Global Tests}

We consider solving RTE \eqref{eqn:RTE_bd} with scattering parameter
\eqref{eq:sd4} and the following inflow boundary conditions over
$\Gamma_-$:
\begin{equation}\label{eqn:bdy_cond}
\phi(x,v) = \begin{cases}
 10 + \sin(2\pi v)\,, \;\;&\text{at}\; x=0\,, v>0 \\
1 + \sin(2\pi v)\,, \;\; &\text{at}\; x=1\,, v<0.
\end{cases}
\end{equation}
We approximate $\Scal_{m}^\text{s}$ by low-rank operators
$\Scal_{m,r}^\text{s}$, according to Algorithm~\ref{alg:rte_rsvd},
with $r = 2,3,4,5,6$.
We then use these low-rank approximations in the reduced Schwarz method,
Algorithm~\ref{alg:re_Schwarz}.
The various approximating solutions are then compared to the reference
solution, also obtained by the Schwarz method, in terms of accuracy
and speed, for different values of the parameter pair $(\vep,\delta)$. We also document the global error as a function of the number of iterations.

\subsubsection*{Accuracy of approximating solution}

Figures~\ref{fig:global_plot}, \ref{fig:global_plot_3}, and
\ref{fig:global_plot_4} show the reference solution over domain
$\Dcal$ and compare with approximate solutions for $r=2$ and $r=6$,
for parameter pair settings $(\vep,\delta) = (1/81,1/81)$,
$(1/81,1/9)$, and $(1,1)$, respectively.  In
Figure~\ref{fig:global_plot}, for $(\vep,\delta) = (1/81,1/81)$, the
approximate solutions are very close to the reference
solution. Figure~\ref{fig:global_plot_3}, with
$(\vep,\delta)=(1/81,1/9)$, shows poor approximation for $r=2$ but
good approximation for $r=6$. For the large-value case $(\vep,\delta)
= (1,1)$, shown in Figure~\ref{fig:global_plot_4}, both approximations
are poor, due to the lack of low-rank structure in
$\Scal_{m}^\text{s}$.


Figure~\ref{fig:global_err} shows the relative difference between
approximate and reference solutions, plotted as a function of $r$, for
the three settings of $(\vep,\delta)$ considered here.  The difference
is defined by the formula
\begin{equation*}
	\text{Relative Error} = \frac{\|u_{\text{approx}} - u_{\text{ref}}\|_2 }{\|u_{\text{ref}}\|_2} \,,
\end{equation*}
where $u_{\text{approx}}$ is the approximate solution in question and
$u_{\text{ref}}$ is the numerical reference solution computed with fine mesh. We evaluate the difference using $l_2$ norm of the two vectors. We see that the quality
of the approximate solution aligns with the local singular value decay
shown in Figure~\ref{fig:local_map_svd}. For large value case
$(\vep,\delta)=(1,1)$, there is no decay in relative errors as $r$
increases. 
For $(\vep,\delta)=(1/81,1/81)$, the relative error is below $10\%$ for
$r=3$ and decreases as $r$ increases. For $(\vep,\delta)=(1/81,1/9)$,
the relative error decreases rapidly with $r$.

\begin{figure}
	\includegraphics[width=0.3\textwidth]{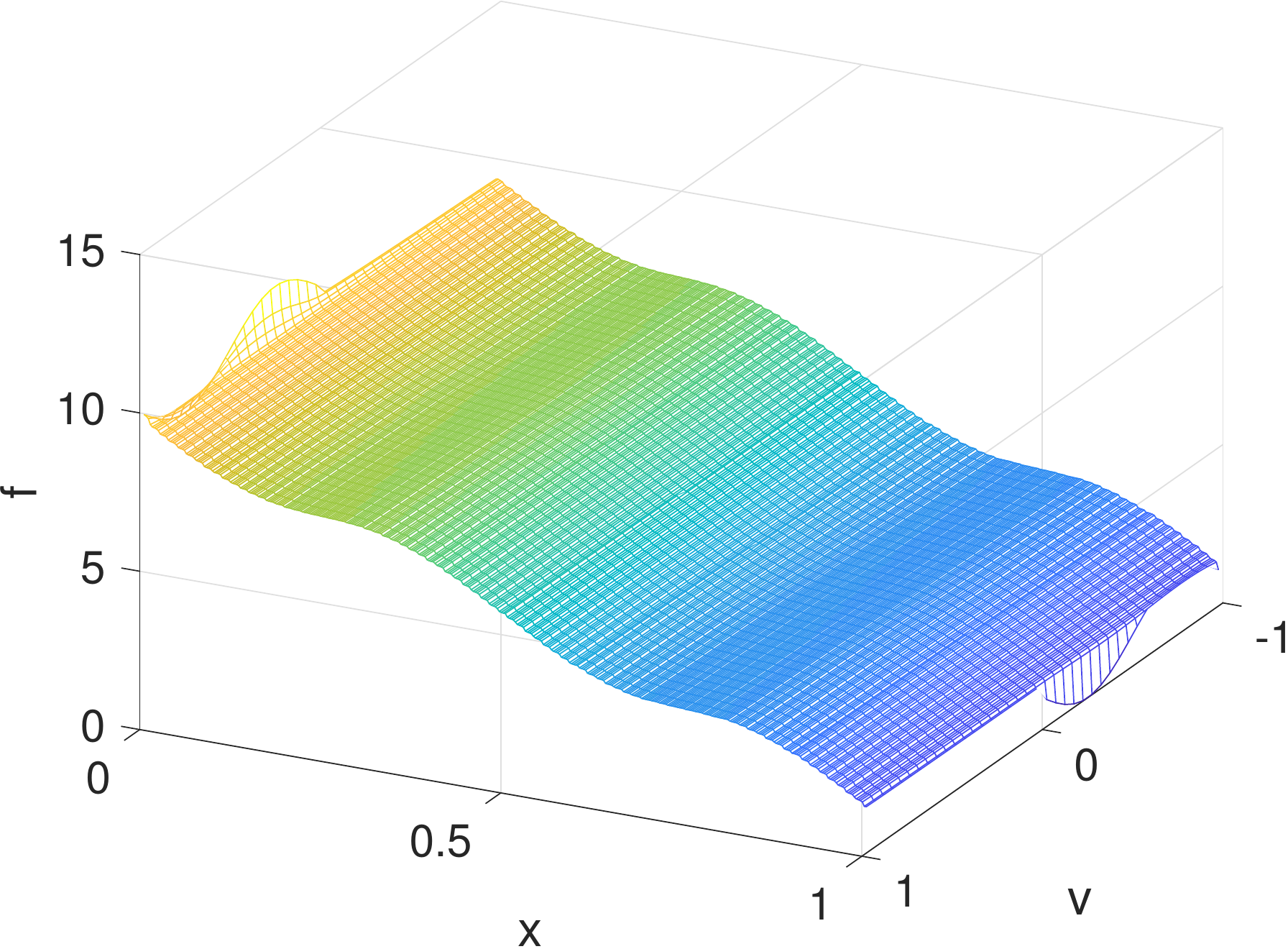}
	\includegraphics[width=0.3\textwidth]{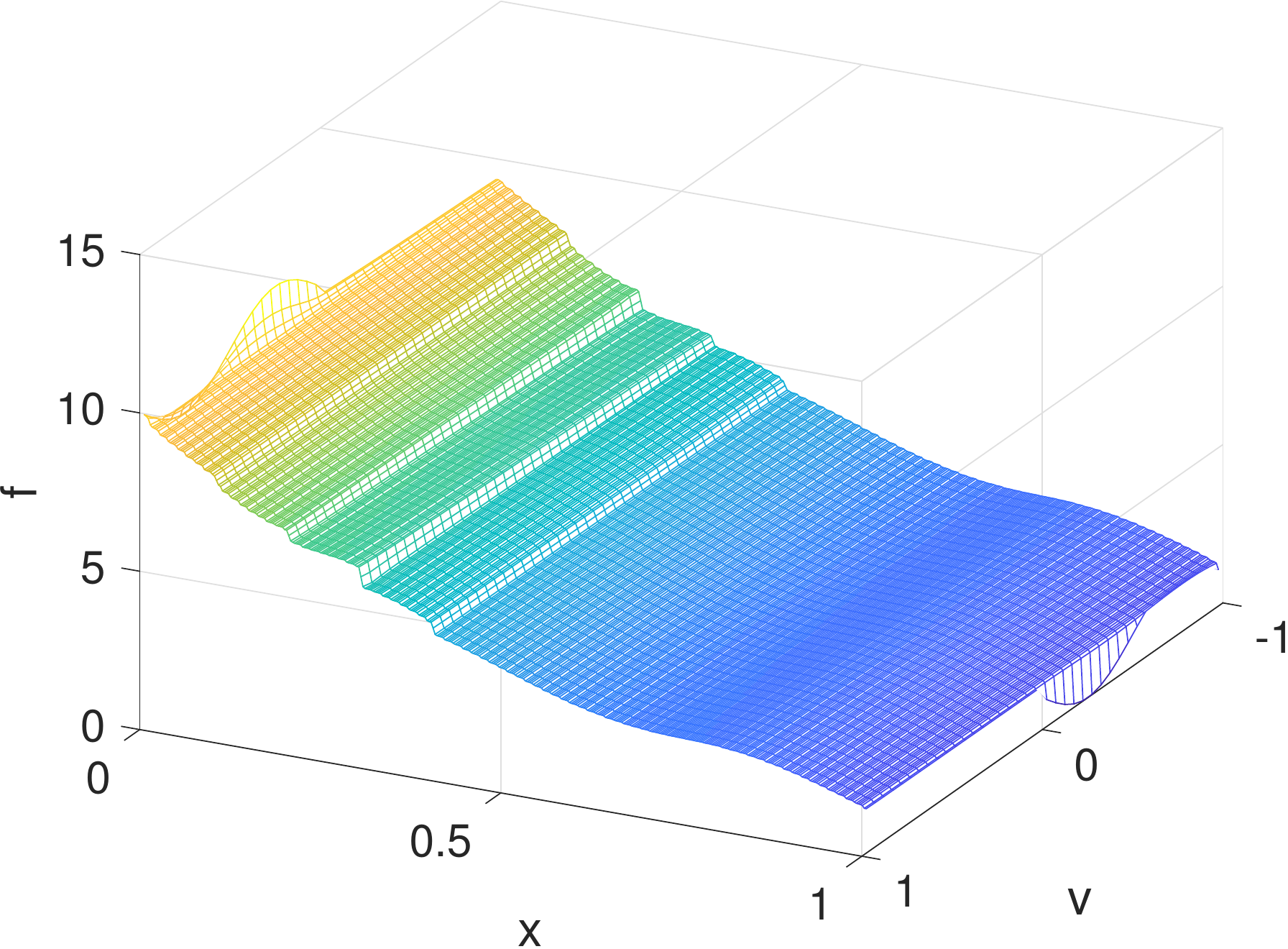}
	\includegraphics[width=0.3\textwidth]{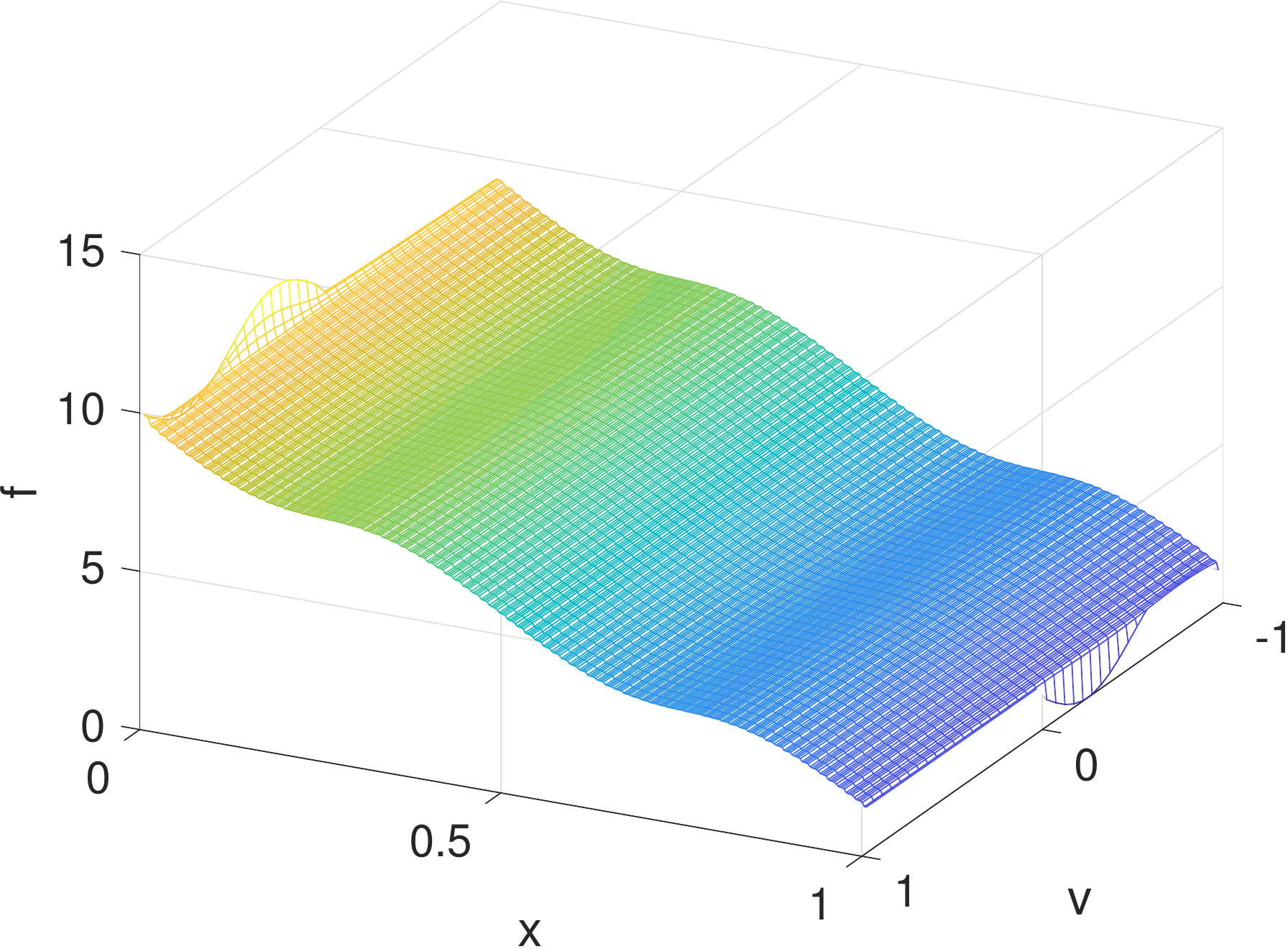}
	\caption{\textbf{For $\vep = \delta = 1/81$. Left:} reference solution, \textbf{Middle:} approximating solution with $r=2$, \textbf{Right:} approximating solution with $r=6$.}
	\label{fig:global_plot}
\end{figure}
\begin{figure}
	\includegraphics[width=0.3\textwidth]{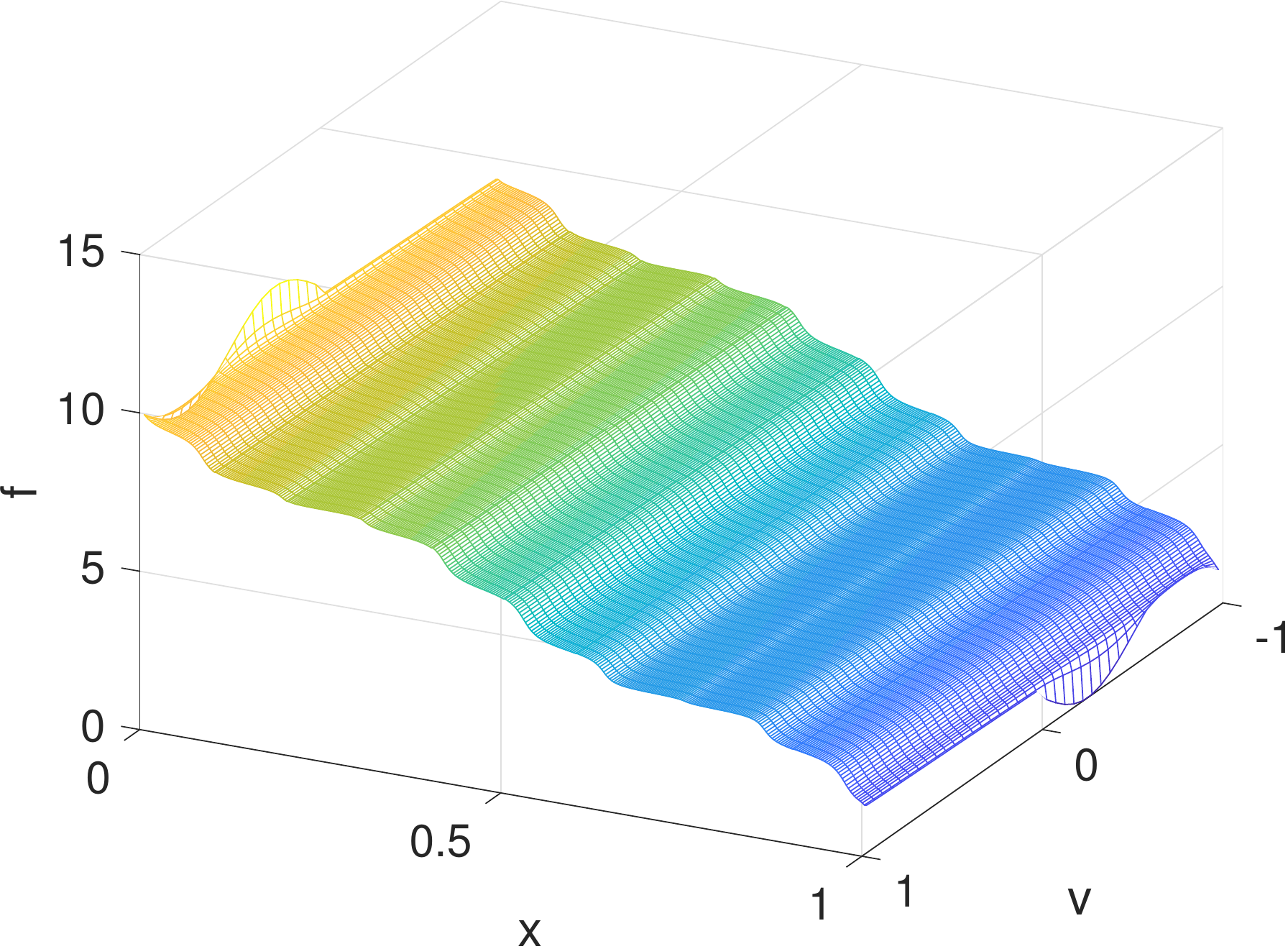}
	\includegraphics[width=0.3\textwidth]{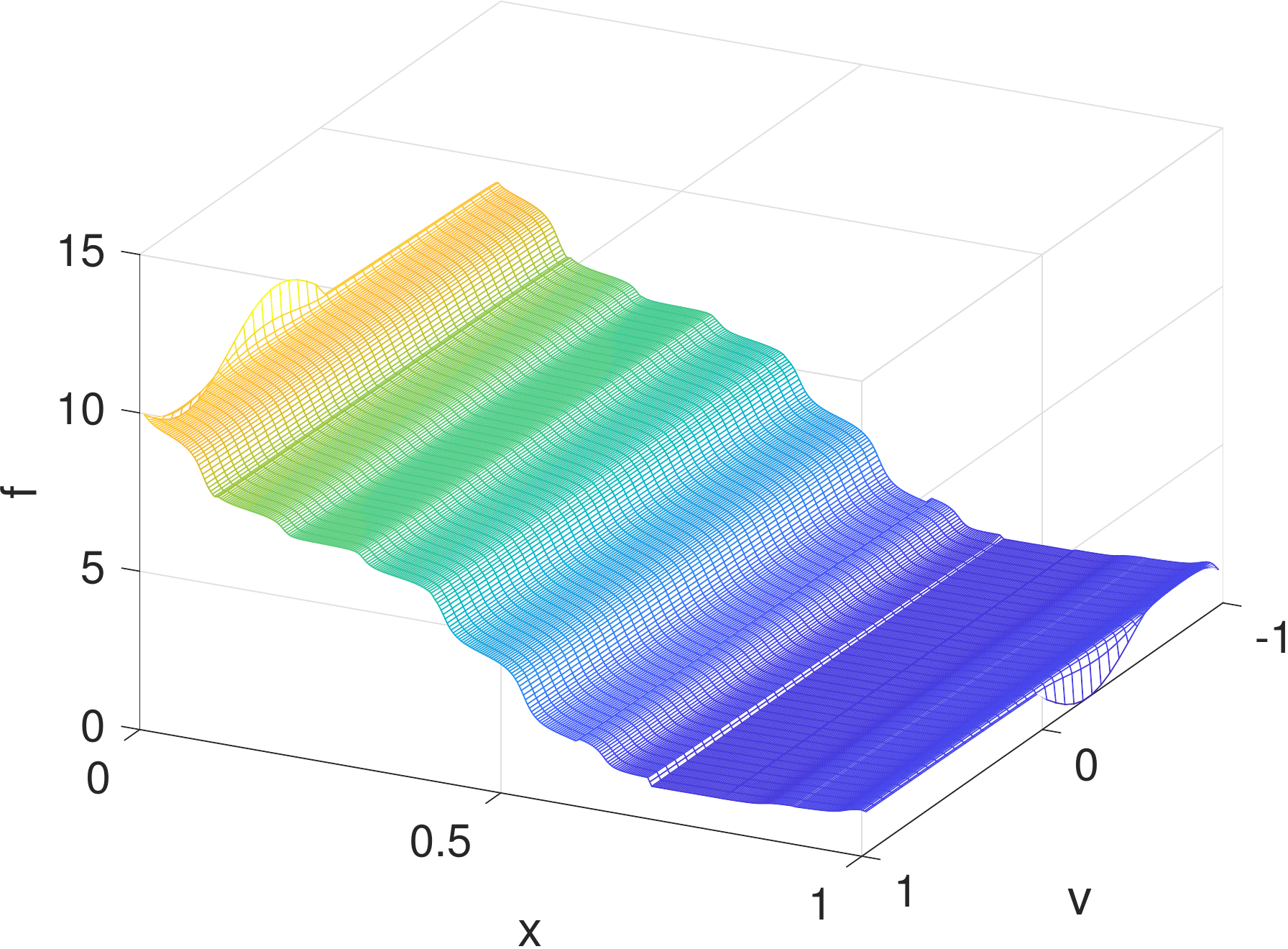}
	\includegraphics[width=0.3\textwidth]{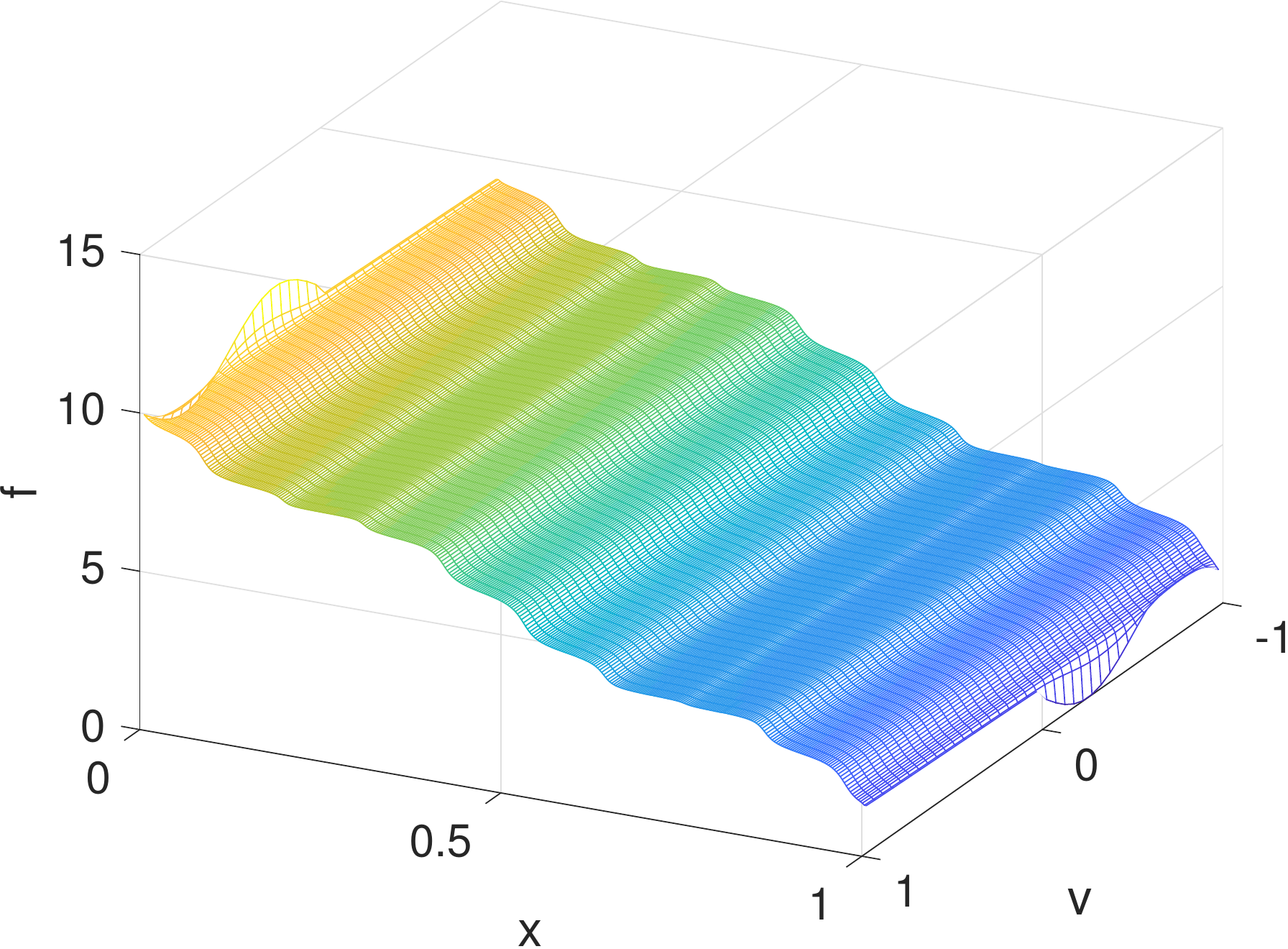}
	\caption{\textbf{For $\vep = 1/81,\delta = 1/9$. Left:} reference solution, \textbf{Middle:} approximating solution with $r=2$, \textbf{Right:} approximating solution with $r=6$.}
	\label{fig:global_plot_3}
\end{figure}
\begin{figure}
	\includegraphics[width=0.3\textwidth]{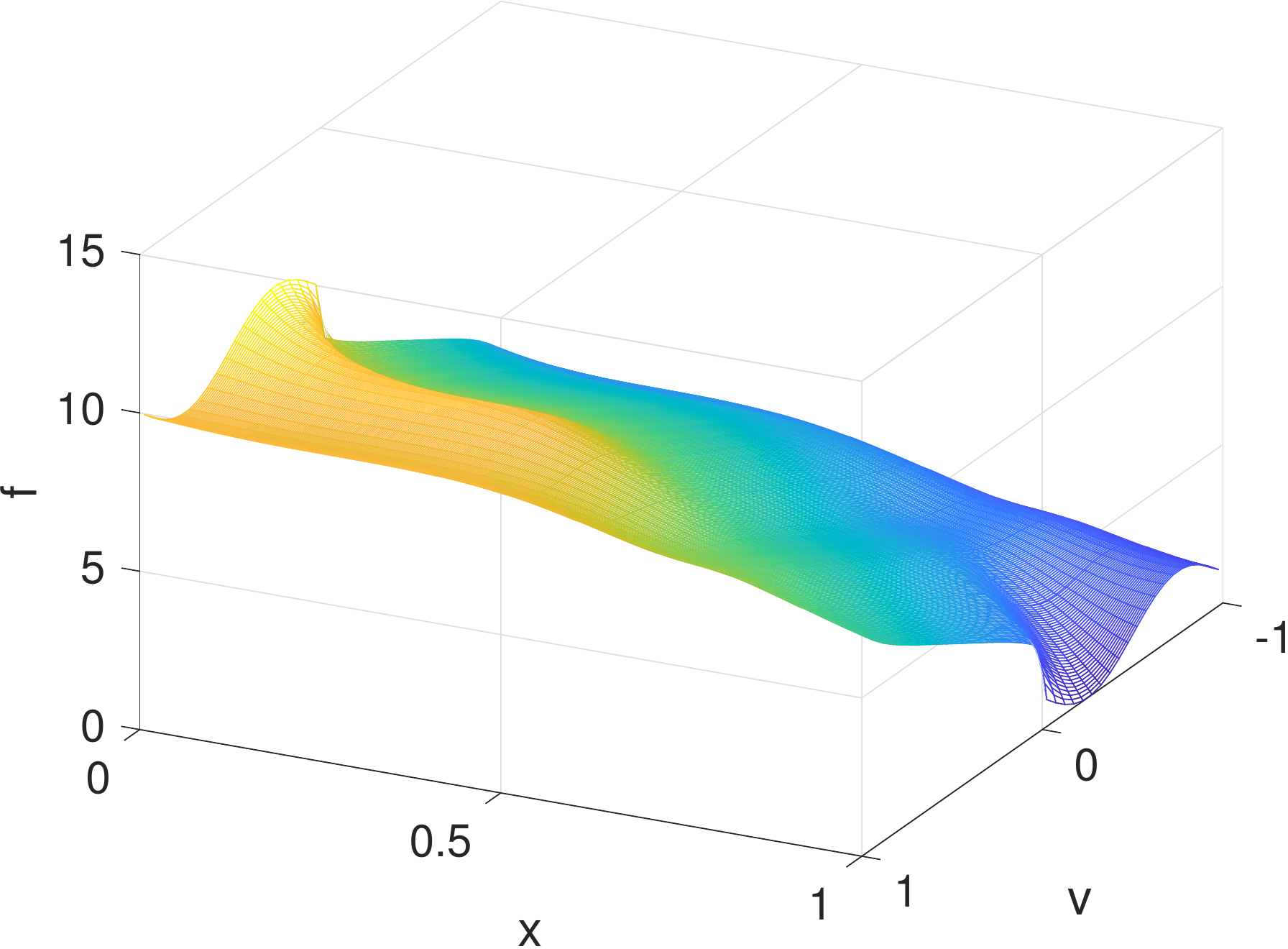}
	\includegraphics[width=0.3\textwidth]{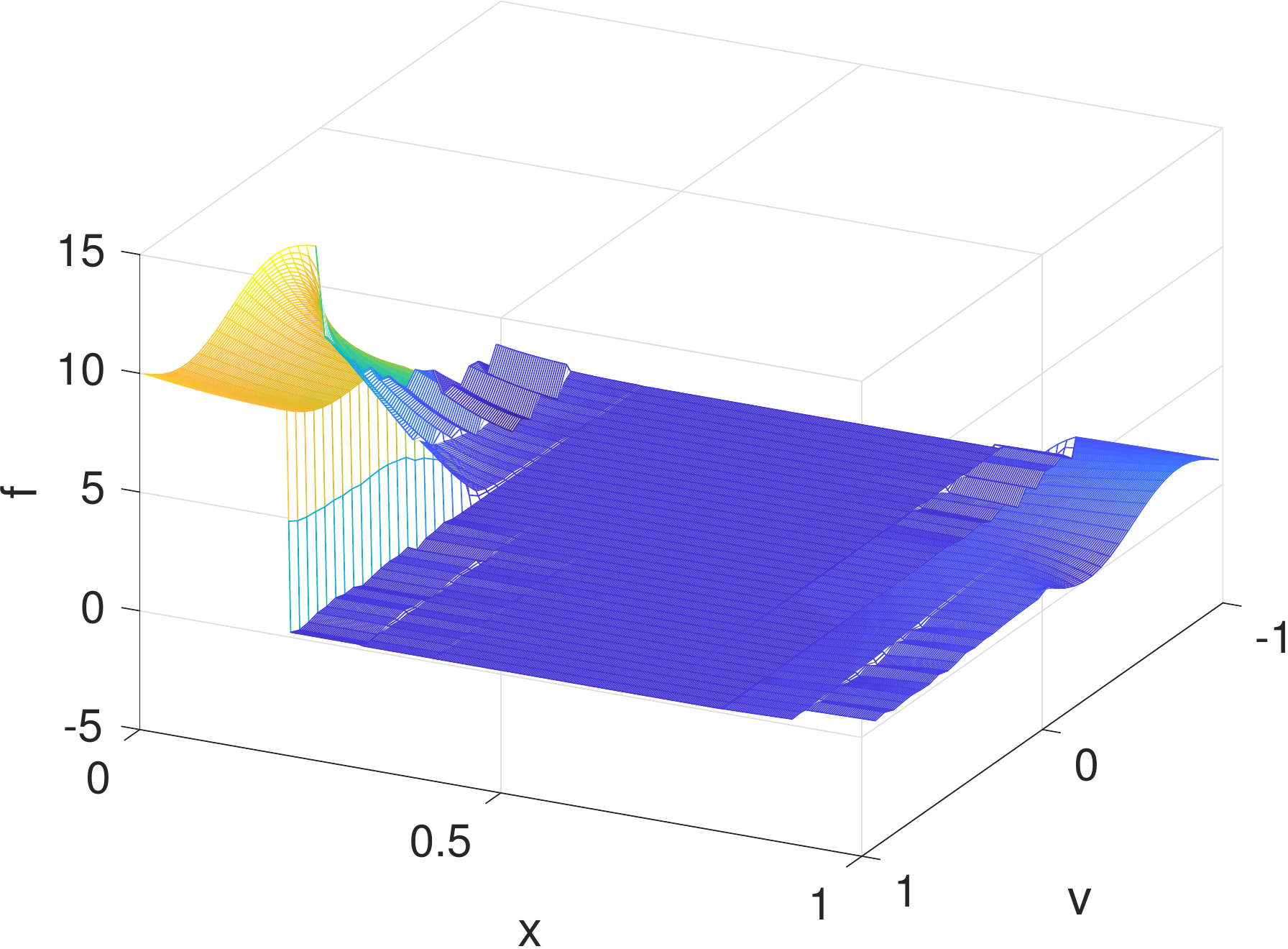}
	\includegraphics[width=0.3\textwidth]{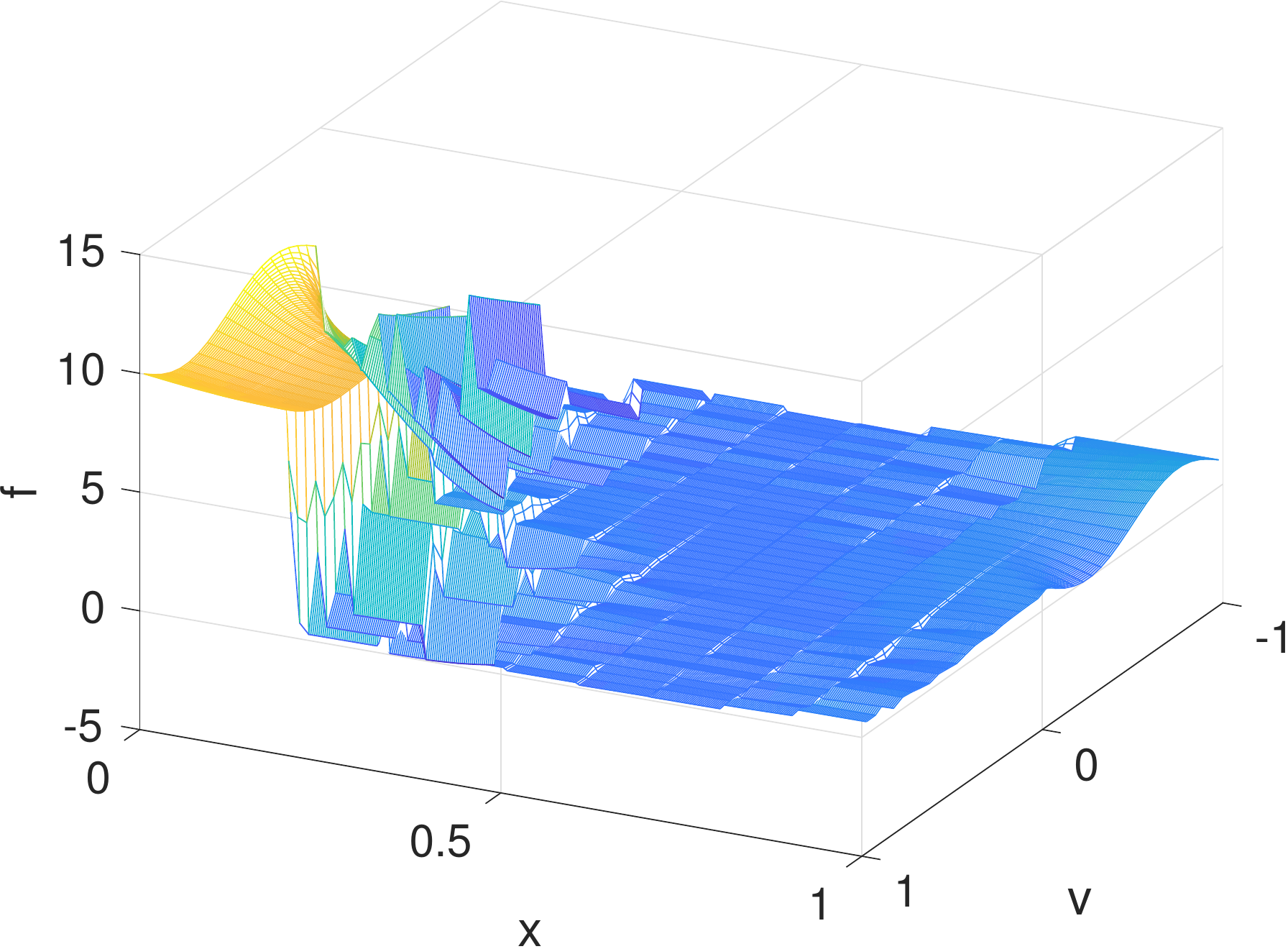}
	\caption{\textbf{For $\vep = \delta = 1$. Left:} reference solution, \textbf{Middle:} approximating solution with $r=2$, \textbf{Right:} approximating solution with $r=6$.}
	\label{fig:global_plot_4}
\end{figure}
\begin{figure}
	\centering
	\includegraphics[width=0.6\textwidth]{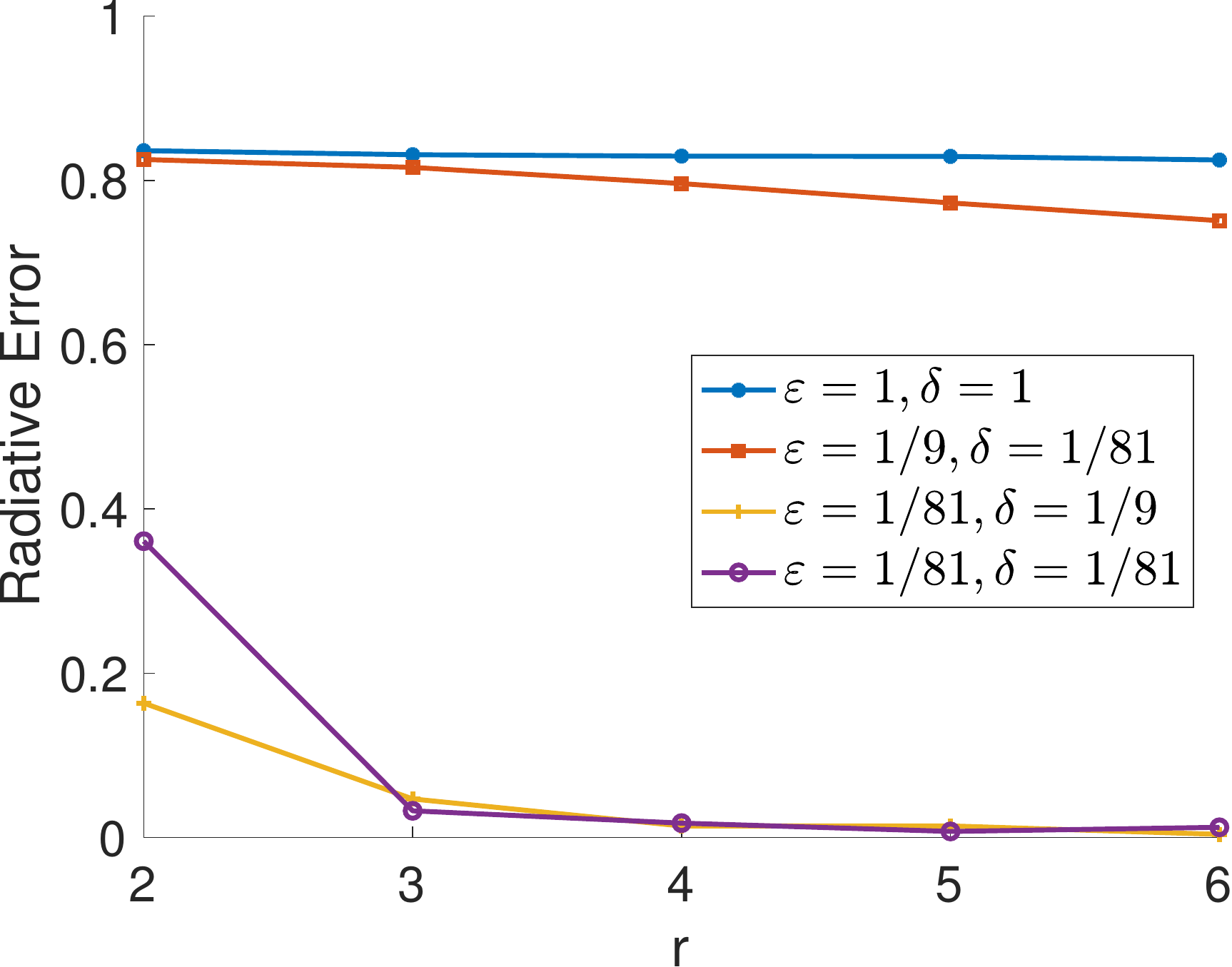}
	\caption{Relative difference between reference solution and
		approximate solutions for various values of $(\vep,\delta)$. 
		The relative error for $(\vep,\delta) = (1/81,1/9)$ with $r= 2,3,4,5,6$ is $0.1637,0.0470,0.0141,0.0142,0.0039$ respectively, and for $(\vep,\delta) = (1/81,1/81)$ the relative error is $ 0.3608,0.0325,0.0176,0.0075,0.0125$ respectively.
		If the
		local map $\Scal_{m}^\text{s}$ admits a low rank structure, then the
		relative error is small even for a low-rank approximation.
	}
	\label{fig:global_err}
\end{figure}
\subsubsection*{Efficiency of approximating solution}

To demonstrate the efficiency of our method, we compare our reduced
Schwarz method and the ``vanilla'' Schwarz method (which does not use
low-rank approximations) in terms of accuracy and running time. In
particular, we run the reduced Schwarz method for $T = 50$ iterations,
and compare the number of iterations needed for vanilla Schwarz
method to achieve the same accuracy.
Figure~\ref{fig:time_array} plots relative error as a function of
iteration number $t$ of reduced Schwarz method and the vanilla
Schwarz, for parameter pairs $(\vep,\delta) = (1/81,1/81)$ and
$(\vep,\delta) = (1/81,1/9)$ and rank $r=6$. The convergence speed of
the vanilla and reduced versions of the Schwarz methods are quite
similar. However, the reduced Schwarz iteration is significantly
cheaper due to the use of the low rank structure. We document the run
time on a standard PC in Table~\ref{tbl:time}. We note that the
vanilla Schwarz iteration does not have the offline step, and the
online stage amounts to computing the equation for the given boundary
condition on each subdomain, and is very expensive. If we need to
solve the RTE for multiple boundary conditions, the computational
saving would be quite significant. We also report on results obtained
with the Schwarz procedure in which the full basis is prepared offline
(denoted in the table as ``Schwarz with full basis").

\begin{table}
	\centering
	\begin{tabular}{l | c | c |c | c}
		\hline \hline 
							& \multicolumn{4}{c}{Running Time (s)}\\
		\hline
							& \multicolumn{2}{c|}{$(\vep,\delta) = (1/81,1/81)$} & \multicolumn{2}{c}{$(\vep,\delta) = (1/81,1/9)$}\\
		\hline
		&offline& online & offline & online \\
		\hline
		Low-Rank Schwarz $r=2$	& 81.01		&	0.0022			& 107.22 &  0.0024 \\
		Low-Rank Schwarz $r=3$	& 111.43		&	0.0023			& 148.09 &   0.0022\\
		Low-Rank Schwarz $r=4$	& 139.97	&	0.0057		&  188.41&   0.0024\\
		Low-Rank Schwarz $r=5$	& 168.64	&	0.0032			& 227.99 &   0.0029\\
		Low-Rank Schwarz $r=6$	& 197.49	&	0.0065			& 268.46 &   0.0162\\
		\hline
		Schwarz with full basis&  535.26& 0.0061 &  706.99 &0.0148\\
		\hline
		Vanilla Schwarz & | &803.01&	|		&1027.40 \\
		\hline\hline 
	\end{tabular}
    \caption{Run time comparison between reduced Schwarz method with  $r=2,3,4,5,6$, an offline/online breakdown of Schwarz method and the vanilla Schwarz method.}
\label{tbl:time}
\end{table}

\begin{figure}
	\includegraphics[width=0.45\textwidth]{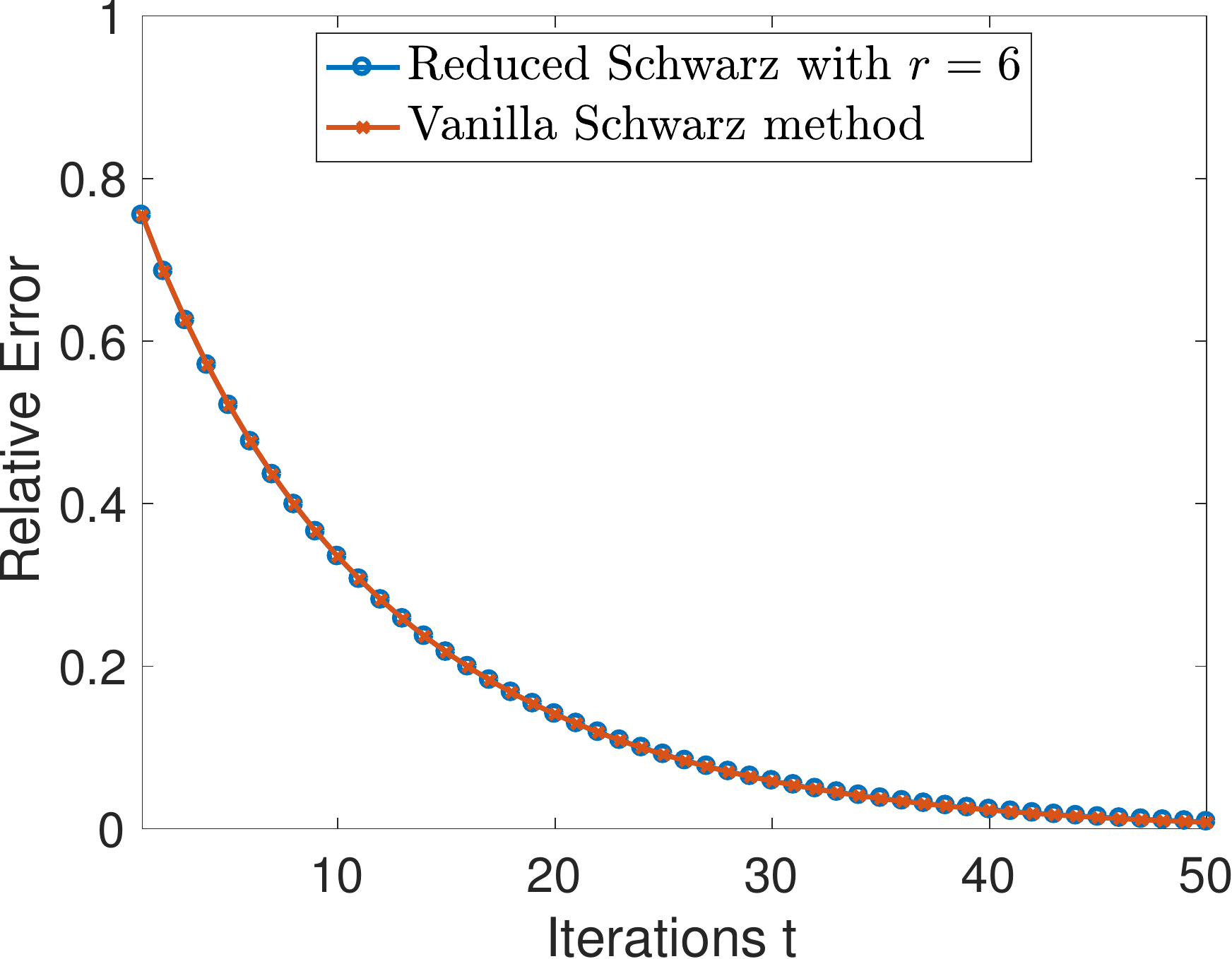}
	\includegraphics[width=0.45\textwidth]{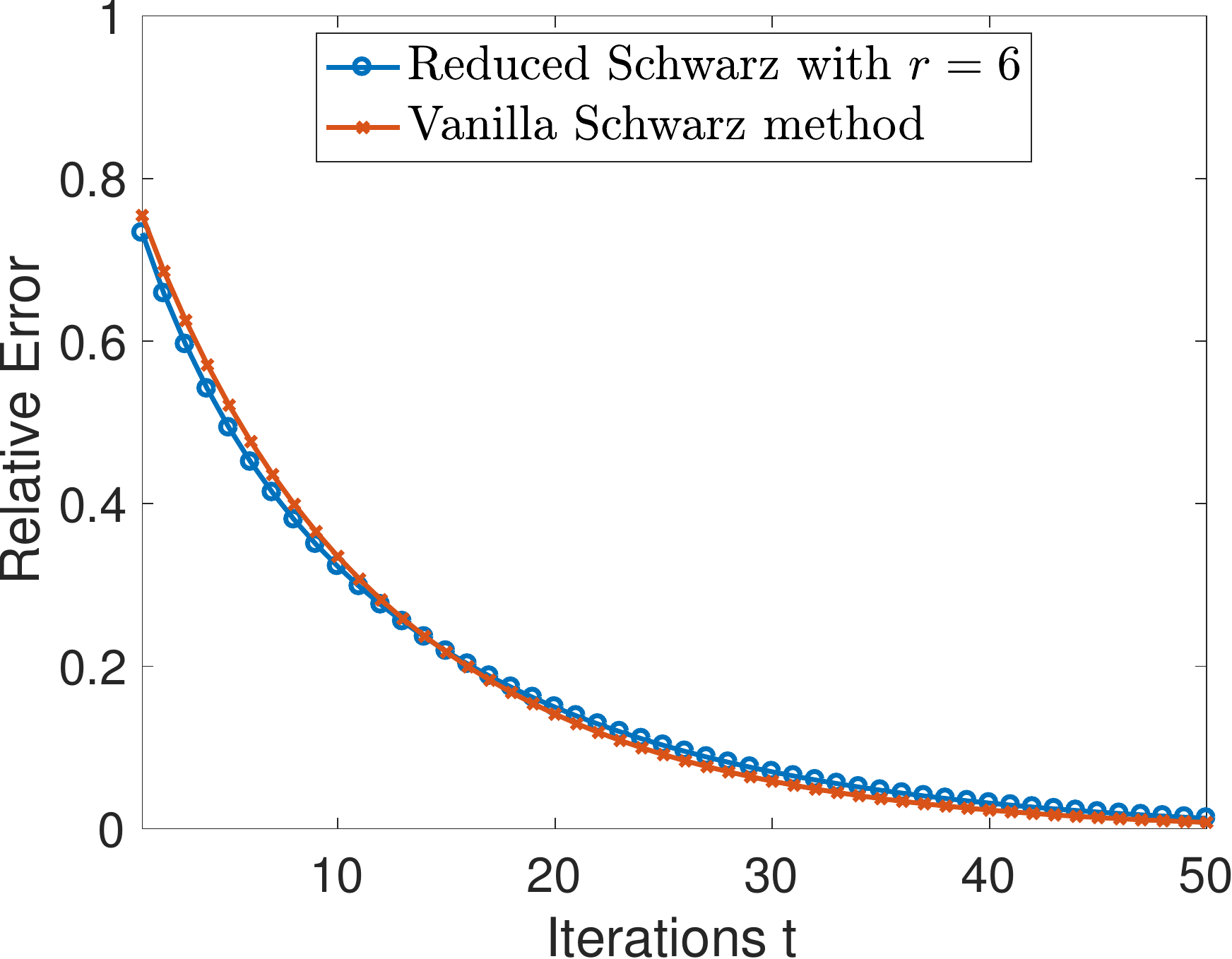}
	\caption{\textbf{Left:} time array of relative error for $(\vep,\delta)=(1/81,1/81)$; \textbf{Right:} time array of relative error for $(\vep,\delta)=(1/81,1/9)$
	}
	\label{fig:time_array}
\end{figure}

\section{Conclusion}

Random sampling is a popular technique in data
  science to find low-rank structure of a matrix. In particular, the
  randomized SVD method has proved to be an efficient and accurate
  technique for finding the dominant singular values and singular
  vectors of a matrix at reasonable cost.

Partial differential equations with multiscale
  structures can usuallly be described by an effective equation
  without fine scale details. In some sense, this phenomenon means the
  operator and the solution space are of low rank. We exploit this
  property to design efficient numerical schemes, using the radiative
  transfer equation as an example. Specifically, we utilize the
  Schwarz iteration in the domain-decomposition framework, with a
  boundary-to-boundary map that communicates between subdomains.  We
  use Randomized SVD to approximate this boundary-to-boundary map,
  exploiting the low-rank structure of the map. This computation is
  performed offline; the Schwarz iteration that makes use of these
  approximate maps is performed online.  Numerical examples confirm
  the effectiveness and computational efficiency of our approach.

Several aspects and extensions of our approach remain
  to be investigated. The biggest obstacle in making our approach
  fully rigorous is the lack of theoretical guarantees on the decay of
  singular values of the PDE operator, in either the $\epsilon\to0$ or
  the $\delta\to 0$ regimes. The lack of such guarantees makes
  numerical analysis hard to perform. Such results might point the way
  to better designs of the domain partition (especially the size of the
  buffer zones) and better choices of the accuracy threshold that
  defines the target rank $r$.

Extensions to spatial domains of dimension greater
  than $1$, especially in the choice of domain partitioning and
  computation of the boundary-to-boundary maps, remain a significant
  computational challenge that could be addressed in future work.

\section*{Acknowledgment}
The authors thank the two anonymous referees for
  insightful suggestions, which lead to great improvement of the
  paper.

\newpage 

\begin{appendix}

\section{Well-posedness Theory of RTE}

We show the well-posedness theory of RTE with fixed parameters
$\vep,\delta>0$ in this appendix. Most results are cited from
~\cite{agoshkov2012boundary}. For simplicity, we consider the
following RTE with inflow boundary condition:
\begin{equation}
	\begin{cases}
	v\cdot \nabla_x u = \Lcal u\,, \quad \text{in}\quad \Dcal\\
	u = \phi\,, \quad \text{on} \quad \Gamma_- \,.
	\end{cases}
\end{equation}
First, we introduce a functional space $H^1_2(\Dcal)$ with norm
defined as follows:
\begin{equation}\label{eqn:H^1_2}
	\| u \|_{H_2^1} = \left[\int_\Dcal |v\cdot \nabla_x u|^2 + |u|^2 \, \rd x \, \rd v \right]^{1/2}
\end{equation}
To find a suitable functional space for solutions of RTE, we modify
$H_2^1(\Dcal)$ and define a Hilbert space $H_A(\Dcal)$ with the
following scalar product and norm:
\begin{equation}\label{eqn:H_A}
	\langle u,w \rangle_A:= (u,w)_{H_2^1(\Dcal)} + \int_{\partial \Dcal} |n\cdot v| uw \, \rd x \, \rd v\,, \quad \|u \|_A = \langle u,u \rangle_A^{1/2}
\end{equation}
$H_A(\Dcal)$ is obviously a subspace of $H_2^1(\Dcal)$. The following
theorem shows well-posedness of RTE over $H_A(\Dcal)$:
\begin{theorem}[Theorem 3.7 in \cite{agoshkov2012boundary}]
	Given an inflow boundary condition $\phi \in L^2(\Gamma_-; |n\cdot v| )$, then there exists a unique solution $u \in H_A$ to RTE such that 
	\begin{equation}\label{thm:wellposed}
		c \| \phi \|_{L^2(\Gamma_-; |n\cdot v| )} \leq \|u\|_{H_A(\Dcal)} \leq \wt{c} \| \phi \|_{L^2(\Gamma_-; |n\cdot v| )}
	\end{equation}
\end{theorem}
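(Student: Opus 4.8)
The plan is to reduce the inhomogeneous inflow problem to one with zero inflow data and an $L^2$ source, to solve that reduced problem by a fixed-point argument along characteristics, and finally to read off the two-sided bound. First I would lift $\phi$ by free transport: let $u_0$ be the function constant along each backward characteristic $s\mapsto x-sv$ with value $\phi$ on $\Gamma_-$, so that $v\cdot\nabla_x u_0=0$ in $\Dcal$ and $u_0|_{\Gamma_-}=\phi$. Parametrizing $\Dcal$ by the flow issuing from $\Gamma_-$ gives $\|u_0\|_{L^2(\Dcal)}^2\le\operatorname{diam}(\Kcal)\,\|\phi\|_{L^2(\Gamma_-;|n\cdot v|)}^2$, while conservation of the boundary flux measure gives $\|u_0\|_{L^2(\Gamma_+;|n\cdot v|)}=\|\phi\|_{L^2(\Gamma_-;|n\cdot v|)}$; together with $v\cdot\nabla_x u_0=0$ this shows $u_0\in H_A(\Dcal)$ with $\|u_0\|_A\le C\,\|\phi\|_{L^2(\Gamma_-;|n\cdot v|)}$. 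Writing $u=u_0+w$, the unknown $w$ must solve $v\cdot\nabla_x w-\Lcal w=\Lcal u_0=:f$ with $w|_{\Gamma_-}=0$, and $\|f\|_{L^2(\Dcal)}\le2\|u_0\|_{L^2(\Dcal)}\le C\,\|\phi\|_{L^2(\Gamma_-;|n\cdot v|)}$.

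Next I would use $\Lcal=K-I$ with $Ku(x,v)=\int_{\mathbb{S}^{d-1}}u(x,v')\,\rd\mu(v')$ to rewrite the $w$-problem as $\Tcal w:=(v\cdot\nabla_x+I)w=Kw+f$, $w|_{\Gamma_-}=0$. The transport resolvent is explicit along characteristics, $\Tcal^{-1}g(x,v)=\int_0^{\tau(x,v)}e^{-s}g(x-sv,v)\,\rd s$, where $\tau(x,v)\in(0,\operatorname{diam}(\Kcal)]$ is the backward exit time; testing $\Tcal\psi=g$ against $\psi$ (the inflow trace vanishing under integration by parts) gives $\|\Tcal^{-1}\|_{L^2(\Dcal)\to L^2(\Dcal)}\le1$. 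The key estimate I would establish is $\|\Tcal^{-1}K\|_{L^2(\Dcal)\to L^2(\Dcal)}<1$: for $w$ with $Kw=\bar w(x)$, a Cauchy--Schwarz bound in $s$ with weight $e^{-s}$ gives $|\Tcal^{-1}Kw(x,v)|^2\le(1-e^{-\operatorname{diam}(\Kcal)})\int_0^{\tau(x,v)}e^{-s}|\bar w(x-sv)|^2\,\rd s$, and integrating over $\Dcal$ together with the substitution $y=x-sv$ yields $\|\Tcal^{-1}Kw\|_{L^2(\Dcal)}\le\sqrt{1-e^{-\operatorname{diam}(\Kcal)}}\,\|w\|_{L^2(\Dcal)}$. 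Then $I-\Tcal^{-1}K$ is invertible by a Neumann series, so $w=(I-\Tcal^{-1}K)^{-1}\Tcal^{-1}f$ is the unique solution, $\|w\|_{L^2(\Dcal)}\le C\|f\|_{L^2(\Dcal)}$, and reading $v\cdot\nabla_x w=Kw+f-w$ off the equation shows $w\in H^1_2(\Dcal)$ with a well-defined trace on $\partial\Dcal$ in the $|n\cdot v|$-weighted norm; an energy identity on $w$ bounds that trace by $\|f\|_{L^2(\Dcal)}$ as well. Hence $u=u_0+w\in H_A(\Dcal)$ and $\|u\|_A\le\wt c\,\|\phi\|_{L^2(\Gamma_-;|n\cdot v|)}$, which is the upper estimate in \eqref{thm:wellposed}.

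For uniqueness of the full problem it suffices, by linearity, to take $\phi=0$: pairing $v\cdot\nabla_x u=\Lcal u$ with $u$ and integrating by parts gives $\tfrac12\int_{\Gamma_+}|n\cdot v|u^2+\int_\Dcal|u-Ku|^2\,\rd x\,\rd v=0$, so $u=\bar u(x)$ is velocity-independent and vanishes on $\Gamma_+$; the equation then reduces to $v\cdot\nabla_x\bar u=0$ for a.e.\ $v$, forcing $\nabla_x\bar u=0$ and hence $u\equiv0$. The lower bound is immediate from the definition of the $H_A$-norm: any solution satisfies $u|_{\Gamma_-}=\phi$, so
\[
\|u\|_A^2\;\ge\;\int_{\partial\Dcal}|n\cdot v|\,u^2\,\rd x\,\rd v\;\ge\;\int_{\Gamma_-}|n\cdot v|\,|\phi|^2\,\rd x\,\rd v\;=\;\|\phi\|_{L^2(\Gamma_-;|n\cdot v|)}^2,
\]
and one may take $c=1$.

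The step I expect to require the most care is the strict contraction $\|\Tcal^{-1}K\|<1$, equivalently solvability of the stationary transport--collision equation at spectral parameter $0$ rather than at some $\lambda>0$: this is exactly where boundedness of $\Kcal$ is essential (the estimate fails on unbounded domains), and the argument must combine the strictly contractive behaviour of the characteristic resolvent on a bounded domain with the fact that $K$ retains only the velocity average, which otherwise sits in the null space of $\Lcal$. An alternative, closer to the cited treatment, is to pose the problem variationally on $H_A(\Dcal)$ and verify an inf--sup (Banach--Ne\v{c}as--Babu\v{s}ka) condition; in that route the analogous difficulty is precisely that $-\Lcal$ degenerates on its null space, so coercivity of the hydrodynamic part of $u$ must be recovered from the transport term together with the outflow boundary term.
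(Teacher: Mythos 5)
Your proposal is essentially correct, but note that the paper does not prove this statement at all: it is quoted verbatim as Theorem~3.7 of \cite{agoshkov2012boundary}, whose proof in that reference proceeds variationally on $H_A(\Dcal)$ (the inf--sup route you mention as an alternative). What you give instead is a self-contained constructive argument: lift $\phi$ by free transport, reduce to a zero-inflow problem with an $L^2$ source, invert $\Tcal=v\cdot\nabla_x+I$ explicitly along characteristics, and close with a Neumann series using the strict contraction $\|\Tcal^{-1}K\|<1$. The lift estimates (the factor $\operatorname{diam}(\Kcal)$ from the flow parametrization, the flux-measure conservation on $\Gamma_+$), the contraction estimate via Cauchy--Schwarz with weight $e^{-s}$ and the substitution $y=x-sv$, the recovery of $v\cdot\nabla_x w\in L^2$ from the equation, the energy identity for uniqueness, and the trivial lower bound $c=1$ from the boundary term in the $H_A$ norm are all sound. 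Your approach buys an explicit solution formula and explicit constants; the variational approach of the cited reference is shorter once the trace and Green's identities on $H_A$ are in place and extends more readily to the source-term and adjoint variants the paper also quotes.

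One caveat deserves attention. Your bound $\tau(x,v)\le\operatorname{diam}(\Kcal)$, hence $1-e^{-\tau}\le 1-e^{-\operatorname{diam}(\Kcal)}<1$, uses unit speed, i.e.\ $\Vcal=\mathbb{S}^{d-1}$ as in the paper's general setup. In the $1+1$ slab geometry actually computed in Section~4, where $v\in(-1,1)$, grazing velocities make $\tau(x,v)=\mathcal{O}(1/|v|)$ unbounded and the pointwise constant degenerates to $1$. The contraction survives, but only after integrating in $v$ first: one gets $\|\Tcal^{-1}K\|^2\le\int_{\Vcal}\bigl(1-e^{-\operatorname{diam}(\Kcal)/|v|}\bigr)\,\rd\mu(v)<1$ because the integrand is strictly below $1$ for a.e.\ $v$ and $\mu$ is a probability measure. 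As written, your estimate is valid for the theorem as stated but should be adjusted if invoked for the slab. A second, minor point: the theorem in the appendix is stated for $\sigma\equiv 1$; for the local problems \eqref{eqn:RTE_local} with variable $\sigma^\delta/\vep$ the same argument works with the exponential weight $e^{-s}$ replaced by $e^{-\int_0^s\sigma^\delta(x-s'v)/\vep\,\rd s'}$, using the uniform positive lower and upper bounds on $\sigma^\delta$.
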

Further, the trace operator $\gamma_{\pm}: u \in H_2^1(\Dcal) \mapsto u|_{\Gamma_{\pm}} \in L^2(\Gamma_\pm; |n\cdot v|)$ is also well-defined by the following theorem:
\begin{theorem}[Theorem 2.8 in \cite{agoshkov2012boundary}]
	If $u\in H_2^1(\Dcal)$, then $u$ has a trace over $\Gamma_\pm$ belonging to $L^2(\Gamma_\pm; |n\cdot v|)$. In addition, we have 
	\begin{equation}\label{eqn:trace}
		\| u|_{\Gamma_{\pm}} \|_{L^2(\Gamma_\pm; |n\cdot v|)} \leq c \| u \|_{H_2^1(\Dcal)}
	\end{equation}
\end{theorem}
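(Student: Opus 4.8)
The statement is classical (see \cite{agoshkov2012boundary}); here I sketch the route I would take, which becomes especially transparent in the $1+1$ setting used throughout this paper. The plan is to reduce to smooth functions, exploit that $u(\cdot,v)$ is a one-dimensional $H^1$ function along the characteristic lines of the transport operator, and read off the trace bound from a change of variables that produces exactly the weight $|n\cdot v|$.

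First I would reduce the problem. It suffices to bound $\gamma_+ u := u|_{\Gamma_+}$, since the estimate on $\Gamma_-$ follows from the substitution $v\mapsto -v$, which leaves $\|\cdot\|_{H_2^1(\Dcal)}$ invariant and interchanges the roles of $\Gamma_+$ and $\Gamma_-$ (and leaves the weight $|n\cdot v|$ unchanged). Next, since $C^1(\overline{\Dcal})$ is dense in $H_2^1(\Dcal)$ — for the product domain considered here this is the usual mollification argument, carried out in $x$ after translating off $\partial\Kcal$ and using that mollification in $x$ commutes with $v\cdot\nabla_x$ — it is enough to prove $\|\gamma_+ u\|_{L^2(\Gamma_+;|n\cdot v|)}\le c\,\|u\|_{H_2^1(\Dcal)}$ for $u\in C^1(\overline{\Dcal})$, for which the trace is the ordinary restriction and the right-hand side is finite.

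The core step would be the characteristic argument. Fix $v$. The restriction of $u(\cdot,v)$ to a line in direction $v$ is a one-dimensional $H^1$ function of arclength $s$, with derivative $(v\cdot\nabla_x u)(\cdot,v)$; foliating $\Kcal$ by such lines and using that the transverse Jacobian of the foliation is precisely $|v\cdot n_x|$ (so $|v\cdot n_x|\,\rd S(x)=\rd y$, with $y$ the coordinate on the cross-section $v^\perp$), the outflow boundary integral for this fixed $v$ equals $\int |u(\text{exit point},v)|^2\,\rd y$. Applying the one-dimensional trace inequality on each characteristic segment bounds the exit value by $\int (|u|^2+|u|\,|v\cdot\nabla_x u|)\,\rd s$ over the segment; then integrating in $y$ and in $v$, using Fubini to turn $\int_y\int_{\text{segment}}(\cdot)\,\rd s\,\rd y$ back into $\int_{\Kcal}(\cdot)\,\rd x$, and Cauchy--Schwarz on the cross term, would give $\|\gamma_+ u\|_{L^2(\Gamma_+;|n\cdot v|)}^2 \lesssim \|u\|_{L^2(\Dcal)}^2 + \|u\|_{L^2(\Dcal)}\,\|v\cdot\nabla_x u\|_{L^2(\Dcal)} \le C\|u\|_{H_2^1(\Dcal)}^2$. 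In the rectangular domain $\Kcal\times\Vcal=(0,1)\times(-1,1)$ of this paper the change of variables is trivial: every characteristic chord has length $1$, one simply writes $\partial_x u(\cdot,v)=v^{-1}(v\partial_x u)(\cdot,v)$ to see $u(\cdot,v)\in H^1(0,1)$, applies the $1$D trace inequality on $(0,1)$, and multiplies through by the weight $|n\cdot v|=|v|$, which cancels the factor $v^{-1}$ exactly before integrating over $v\in(-1,1)$.

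The point that requires care — and the reason a naive version of the argument does not immediately extend to an arbitrary curved domain — is the grazing set: the constant in the $1$D trace inequality degenerates like the reciprocal of the chord length as a characteristic becomes tangent to $\partial\Kcal$, i.e. as $|v\cdot n_x|\to 0$, so uniformity of the estimate over the whole boundary hinges on the precise interplay between the weight $|n\cdot v|$ and the definition of $H_2^1$; this is what is handled carefully in \cite{agoshkov2012boundary}. For the slab/rectangle geometry relevant here no grazing issue arises, and the elementary argument of the previous paragraph is self-contained.
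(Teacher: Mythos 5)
The paper does not prove this statement at all: it is imported verbatim as Theorem~2.8 of \cite{agoshkov2012boundary}, so there is no internal argument to compare against. Your characteristic-line proof is the standard route to such trace estimates and, as written out for the slab geometry $\Kcal\times\Vcal=(0,1)\times(-1,1)$ actually used in this paper, it is complete and correct: the identity $\partial_x u(\cdot,v)=v^{-1}(v\partial_x u)(\cdot,v)$ puts $u(\cdot,v)$ in $H^1(0,1)$ for a.e.\ $v\neq 0$, the one-dimensional trace inequality gives $|u(1,v)|^2\lesssim \int_0^1|u|^2\,\rd x+\int_0^1|u|\,|\partial_x u|\,\rd x$, and multiplying by the weight $|n\cdot v|=|v|$ cancels the $v^{-1}$ exactly before Cauchy--Schwarz and integration in $v$, yielding \eqref{eqn:trace}. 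Your flag about the grazing set is not merely a cosmetic hedge but the genuinely delicate point of the general theory: for a curved boundary the $1/L$ factor in the one-dimensional trace inequality does not cancel against $|n\cdot v|$ uniformly as the chord degenerates, and it is known (Cessenat-type results) that membership in the graph space alone does not in general force the trace to lie in $L^2(\Gamma_\pm;|n\cdot v|)$ without further hypotheses or a modified weight; this is precisely what the cited reference handles and what your sketch correctly declines to reprove. In short: the paper cites, you prove (in the relevant geometry), and the two are consistent; the only caveat worth recording is that the multidimensional version of the statement rests on the reference and not on the elementary argument.
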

Moreover, for RTE with a source term:
\begin{equation}
	\begin{cases}
	v\cdot \nabla_x u = \Lcal u + f \quad & \text{in}\quad \Dcal\\
	u = \phi \quad & \text{on} \quad \Gamma_- 
	\end{cases}
\end{equation}
the following theorem holds.
\begin{theorem}[Theorem 3.13 in \cite{agoshkov2012boundary}]
  If $f$ belongs to $ H_A^{-1}(\Dcal)$, the dual space $H_A$
  under $L^2$ pairing over $\Dcal$, $\phi \in L^2(\Gamma_-;
  |n\cdot v| )$ then the above equation admits an unique
  solution $u\in L^2(\Dcal)$ such that
  \[
  \| u \|_{L^2(\Dcal)} \leq c\left[\|f \|_{H_A^{-1}(\Dcal)} + \| \phi\|_{L^2(\Gamma_-; |n\cdot v| )} \right].
  \]
  Further, if $f\in L^2(\Dcal)$, then the solution $u\in H_A(\Dcal)$.
\end{theorem}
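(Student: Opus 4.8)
\emph{(Proof sketch.)}
The plan is to first remove the inhomogeneous inflow datum, reducing to a source problem with zero inflow, and then handle the latter by combining a parity (even/odd in $v$) reformulation with a duality argument.

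\emph{Step 1: reduction to zero inflow.} I will lift $\phi$ using the well-posedness result above (Theorem~3.7 of \cite{agoshkov2012boundary}): let $w\in H_A(\Dcal)$ solve $v\cdot\nabla_x w=\Lcal w$, $w|_{\Gamma_-}=\phi$, so $\|w\|_{H_A(\Dcal)}\le\wt{c}\,\|\phi\|_{L^2(\Gamma_-;|n\cdot v|)}$. Since $w\in H_A(\Dcal)\subset H_2^1(\Dcal)$, both $v\cdot\nabla_x w$ and $\Lcal w$ lie in $L^2(\Dcal)$, so $u$ solves the source problem if and only if $\wt{u}:=u-w$ solves $v\cdot\nabla_x\wt{u}-\Lcal\wt{u}=f$ with $\wt{u}|_{\Gamma_-}=0$. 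It therefore suffices to prove: for $f\in H_A^{-1}(\Dcal)$ there is a unique $\wt{u}\in L^2(\Dcal)$ with $v\cdot\nabla_x\wt{u}-\Lcal\wt{u}=f$ and $\wt{u}|_{\Gamma_-}=0$, satisfying $\|\wt{u}\|_{L^2(\Dcal)}\le c\|f\|_{H_A^{-1}(\Dcal)}$, and that $f\in L^2(\Dcal)$ forces $\wt{u}\in H_A(\Dcal)$. The stated estimate on $u$ then follows by the triangle inequality.

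\emph{Step 2: the $L^2$-source case.} Writing $\Lcal u=\ol{u}-u$ with $\ol{u}(x)=\int_{\mathbb{S}^{d-1}}u(x,v)\,\rd\mu(v)$, one has the dissipativity $\int_\Dcal u\,(-\Lcal u)\,\rd x\,\rd v=\int_\Dcal|u-\ol{u}|^2\,\rd x\,\rd v\ge 0$. I will split $u=u^++u^-$ and $f=f^++f^-$ into even and odd parts under $v\mapsto-v$. Since $v\cdot\nabla_x$ swaps parities and $\ol{u^-}=0$, matching parities reduces the equation to $u^-=-v\cdot\nabla_x u^++f^-$ together with the symmetric second-order problem
\[
-\,v\cdot\nabla_x\bigl(v\cdot\nabla_x u^+\bigr)+\bigl(u^+-\ol{u^+}\bigr)=f^+-v\cdot\nabla_x f^-,
\]
subject to a Robin-type boundary condition on $\partial\Dcal$ inherited from $u|_{\Gamma_-}=0$. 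This is a symmetric variational problem on $\{u^+\text{ even}:u^+,\,v\cdot\nabla_x u^+\in L^2(\Dcal)\}$ (the boundary trace being legitimate by the trace/Green identity of Theorem~2.8 of \cite{agoshkov2012boundary}); granting coercivity, Lax--Milgram gives $u^+$, hence $u^-$, hence $u\in H_A(\Dcal)$ with $\|u\|_{H_A(\Dcal)}\le c\|f\|_{L^2(\Dcal)}$. Replacing $v$ by $-v$ (which swaps $\Gamma_\pm$ and fixes $\Lcal$) gives the same conclusion for the adjoint operator: for every $g\in L^2(\Dcal)$ there is $\psi\in V:=\{\psi\in H_A(\Dcal):\psi|_{\Gamma_+}=0\}$ with $-v\cdot\nabla_x\psi-\Lcal\psi=g$ and $\|\psi\|_{H_A(\Dcal)}\le c\|g\|_{L^2(\Dcal)}$.

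\emph{Step 3: general $f$ by duality.} Testing $v\cdot\nabla_x\wt{u}-\Lcal\wt{u}=f$ against $\psi\in V$ and integrating by parts (the boundary terms vanish since $\wt{u}|_{\Gamma_-}=0$ and $\psi|_{\Gamma_+}=0$) gives the weak form $b(\wt{u},\psi):=\int_\Dcal\wt{u}\,(-v\cdot\nabla_x\psi-\Lcal\psi)\,\rd x\,\rd v=\langle f,\psi\rangle$ for all $\psi\in V$, with $b$ bounded on $L^2(\Dcal)\times V$. The surjectivity together with the quantitative bound from Step~2 (applied to the adjoint operator) is exactly the inf--sup condition $\inf_{\wt{u}\ne0}\sup_{\psi\ne0}b(\wt{u},\psi)/(\|\wt{u}\|_{L^2(\Dcal)}\|\psi\|_{H_A(\Dcal)})\ge\beta>0$, and uniqueness for the homogeneous adjoint problem supplies the complementary injectivity hypothesis; the generalized Lax--Milgram (Banach-space inf--sup) theorem then produces a unique $\wt{u}\in L^2(\Dcal)$ with $\|\wt{u}\|_{L^2(\Dcal)}\le\beta^{-1}\|f\|_{H_A^{-1}(\Dcal)}$. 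When $f\in L^2(\Dcal)\subset H_A^{-1}(\Dcal)$, uniqueness identifies this $\wt{u}$ with the one from Step~2, which lies in $H_A(\Dcal)$; this gives the final assertion.

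The main obstacle is the coercivity asserted in Step~2: the streaming part $v\cdot\nabla_x$ is skew-symmetric and $-\Lcal$ degenerates on $v$-independent functions, so recovering the full $L^2$ (and then $H_A$) norm of $u^+$ requires a Poincar\'e/velocity-averaging inequality obtained by integrating along characteristics up to the absorbing boundary, together with a careful translation of $u|_{\Gamma_-}=0$ into the parity variables. Once that estimate is in hand, Steps~1 and~3 are routine.
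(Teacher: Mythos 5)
First, a point of order: the paper offers no proof of this statement at all --- it is quoted verbatim as Theorem~3.13 of \cite{agoshkov2012boundary} --- so there is no in-paper argument to compare yours against. I can only assess your sketch on its own terms.

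Your architecture --- lift the inflow datum via the homogeneous well-posedness result, reduce to a zero-inflow source problem, solve the $L^2$-source case via the even-parity variational formulation, then extend to $f\in H_A^{-1}(\Dcal)$ by transposition and an inf-sup argument --- is a legitimate and standard route to results of this type, and your parity reduction is algebraically correct. However, the sketch has a genuine gap exactly where you flag one, and it is not a peripheral detail: the coercivity in Step~2 is the entire analytic content of the theorem. The bilinear form associated with $-v\cdot\nabla_x(v\cdot\nabla_x u^+)+u^+-\ol{u^+}$ yields only $\|v\cdot\nabla_x u^+\|_{L^2}^2+\|u^+-\ol{u^+}\|_{L^2}^2$ plus a nonnegative boundary term; because $\Lcal$ annihilates velocity-independent functions, the norm of the average $\ol{u^+}$ is not controlled, and the form is \emph{not} coercive on the natural space without an additional transport--Poincar\'e (or velocity-averaging, or Fredholm-plus-uniqueness) argument exploiting the boundedness of $\Kcal$ and the absorbing boundary. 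Until that inequality is supplied, Step~3 also fails, since the inf-sup constant $\beta$ is manufactured precisely from the quantitative adjoint solvability that Step~2 was supposed to deliver. Two smaller issues: for $f\in H_A^{-1}(\Dcal)$ the solution lies only in $L^2(\Dcal)$ and has no trace, so $\wt{u}|_{\Gamma_-}=0$ cannot be imposed pointwise --- the transposed identity $b(\wt{u},\psi)=\langle f,\psi\rangle$ must be taken as the \emph{definition} of the solution rather than derived by integration by parts as you do; and the translation of $u|_{\Gamma_-}=0$ into the Robin condition on $u^+$ should be written out explicitly, since that boundary term is an ingredient of the very coercivity estimate that is missing.
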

\begin{remark}
	The theorems above also hold for adjoint RTE with or without a source term. Details can be found in \cite{agoshkov2012boundary}.
\end{remark}
\subsection*{Proof of Theorem~\ref{thm:adjoint}}
\begin{proof}
Considering any $\phi\in L^2(\Gamma_{m,-}; |n\cdot v| ) $ and $\psi
\in L^2(\Gamma^s_{m,+}; |n\cdot v|)$, we have
\[
	\langle \Pcal_m\phi,\psi \rangle_{\Gamma_{m,+}^s} = \int_{\Gamma_{m,+}^s} f \psi (n\cdot v) = \int_{\Gamma_{m,+}^s} fg(n\cdot v) - \int_{\Gamma^s_{m,+}} fh (n\cdot v),
\]
where the first equality comes from definition of $\Pcal_m$ and the
second from the definition of $g$. By multiplying
\eqref{eqn:adjoint_RTE1} by $f$ and integrating over $\Dcal_m^s$, we
obtain
\[
0 = \int_{\Dcal_m^s} f(-v\cdot \nabla_x -\frac{\sigma}{\vep}\Lcal ) g
= \int_{\Gamma^s_{m,+}} fg(n\cdot v) + \int_{\Gamma^s_{m,-}}fg(-n\cdot
v),
\]
where the first equality comes from \eqref{eqn:adjoint_RTE1} and
second from integration by parts. Comparing the equations above, we
have
\[
\langle \Pcal_m\phi,\psi \rangle_{\Gamma_{m,+}^s} =
\int_{\Gamma^s_{m,-}}fg(n\cdot v)- \int_{\Gamma^s_{m,+}} fh (n\cdot
v).
\]
It is easy to see that from the definition of $h$ we also have
\[
	\langle \Pcal_m\phi,\psi \rangle_{\Gamma_{m,+}^s} =
        \int_{\Gamma^s_{m,-}}fh(n\cdot v)- \int_{\Gamma^s_{m,+}} fh
        (n\cdot v).
\]
By multiplying \eqref{eqn:adjoint_RTE2} by $f$ and integrating over
$\Dcal_m\backslash \Dcal_m^s$, we obtain
\begin{align*}
    0 &= \int_{\Dcal/\Dcal_m^s} f(-v\cdot \nabla_x -\frac{\sigma}{\vep}\Lcal ) h \\
    & = \int_{\Gamma^s_{m,+}} fh(\tilde{n}\cdot v) + \int_{\Gamma^s_{m,-}}fh(-\tilde{n}\cdot v) + \int_{\Gamma_{m,-}} fh(-\tilde{n}\cdot v) + \int_{\Gamma_{m,+}} fh(\tilde{n}\cdot v),
\end{align*}
where the first equality comes from \eqref{eqn:adjoint_RTE2} and
second from integration by parts. We use notation $\tilde{n}$ as the
outer normal direction over $\Gamma^s_{m,+}$ and $\Gamma^s_{m,-}$ with
respect to the domain $\Dcal_m\backslash \Dcal_m^s$, to distinguish it
from the outer normal with respect to the domain $\Dcal_m^s$. In fact,
the two instances of ``outer normal'' have opposite directions when we
interpret $\Gamma^s_{m,\pm}$ as the boundary of $\Dcal_m^s$ and the
boundary of $\Dcal_m\backslash \Dcal_m^s$. By comparing the equations
above, we have
\[
\langle \Pcal_m\phi,\psi \rangle_{\Gamma_{m,+}^s} =\int_{\Gamma_{m,-}}
fh(-\tilde{n}\cdot v) + \int_{\Gamma_{m,+}} fh(\tilde{n}\cdot v).
\]
Noticing that $h = 0 $ over $\Gamma_{m,+}$ and $\tilde{n}$ is also the
outer normal direction over $\Gamma_{m,-}$ when interpreted as the
boundary of $\Dcal_m$, we have
\[
	\langle \Pcal_m\phi,\psi \rangle_{\Gamma_{m,+}^s}
        =\int_{\Gamma_{m,-}} fh(-\tilde{n}\cdot v) =
        \int_{\Gamma_{m,-}} fh(-n\cdot v) = \langle \phi, \Ycal_m \psi
        \rangle_{\Gamma^s_{m,+}},
\]
where the last equality comes from the definition of $f$ and $\psi$.
\end{proof}

\end{appendix}

\newpage
\bibliographystyle{siam}
\bibliography{ref}

\end{document}